	\tikzset{every path/.style={thick}}
\numberwithin{equation}{section}
\theoremstyle{plain}
\newtheorem{thm}{Theorem}[section]
\newtheorem{prop}[thm]{Proposition}
\newtheorem{lem}[thm]{Lemma}
\newtheorem{cor}[thm]{Corollary}
\newtheorem{obs}[thm]{Observation}
\theoremstyle{definition}
\newtheorem{defi}[thm]{Definition}
\newtheorem{ex}[thm]{Example}
\newtheorem{question}{Open Problem}
\newcommand*\rel@kern[1]{\kern#1\dimexpr\macc@kerna}
\newcommand*\widebar[1]{%
	\begingroup
	\def\mathaccent##1##2{%
		\rel@kern{0.8}%
		\overline{\rel@kern{-0.8}\macc@nucleus\rel@kern{0.2}}%
		\rel@kern{-0.2}%
	}%
	\macc@depth\@ne
	\let\math@bgroup\@empty \let\math@egroup\macc@set@skewchar
	\mathsurround\z@ \frozen@everymath{\mathgroup\macc@group\relax}%
	\macc@set@skewchar\relax
	\let\mathaccentV\macc@nested@a
	\macc@nested@a\relax111{#1}%
	\endgroup
}
\newcommand{\calE}{\mathcal{E}}
\newcommand{\R}{\mathbb{R}}
\newcommand{\Z}{\mathbb{Z}}
\newcommand{\dotcup}{\hspace{.22em}\ensuremath{\mathaccent\cdot\cup}\hspace{.22em}}
\newcommand{\conv}{\mathrm{conv}}
\newcommand{\cone}{\mathrm{cone}}
\newcommand{\CutP}{\textsc{Cut}}
\newcommand{\Bond}{\textsc{Bond}}
\newcommand{\supp}{\mathrm{supp}}
\newcommand{\mypar}{\paragraph}
\newcommand{\qq}[1]{``#1''}
\newcommand{\MC}{\textsc{MaxCut}\xspace}
\newcommand{\MB}{\textsc{MaxBond}\xspace}
\newcommand{\V}{{\widebar{V}}}
\newcommand{\E}{{\widebar{E}}}
\newcommand{\G}{{\widebar{G}}}
\renewcommand{\a}{{\widebar{a}}}
\newcommand{\ov}{\widebar{v}}
\newcommand{\prism}{\textit{Prism}}
\newcommand{\relint}{\mathrm{relint}}
\newcommand{\e}{{e^*}}
\newcommand{\bnull}{{\bf 0}}
\providecommand{\keywords}[1]{\textbf{\textit{keywords---}} #1}
\title{On the Bond Polytope}
\author{Markus Chimani, Martina Juhnke-Kubitzke, Alexander Nover}
\date{\smaller School of Mathematics/Computer Science, Uni Osnabrück, Germany\\
\{markus.chimani,juhnke-kubitzke,alexander.nover\}@uni-osnabrueck.de}
\begin{document}
\maketitle
\begin{abstract}
	Given a graph $G=(V,E)$, the maximum bond problem searches for a maximum cut $\delta(S) \subseteq E$ with $S \subseteq V$ such that $G[S]$ and $G[V\setminus S]$ are connected. This problem is closely related to the well-known maximum cut problem and known under a variety of names such as largest bond, maximum minimal cut and maximum connected (sides) cut.
	The bond polytope is the convex hull of all incidence vectors of bonds. Similar to the connection of the corresponding optimization problems, the bond polytope is closely related to the cut polytope. While cut polytopes have been intensively studied, there are no results on bond polytopes. We start a structural study of the latter.
	
	 We investigate the relation between cut- and bond polytopes and study the effect of graph modifications on bond polytopes and their facets. Moreover, we study facet-defining inequalities arising from edges and cycles for bond polytopes. In particular, these yield a complete linear description of bond polytopes of cycles and $3$-connected planar $(K_5-e)$-minor free graphs. Moreover we present a reduction of the maximum bond problem on arbitrary graphs to the maximum bond problem on $3$-connected graphs. This yields a linear time algorithm for maximum bond on $(K_5-e)$-minor free graphs.
\end{abstract}

\keywords{connected maximum cut, maximum bond, maximum minimal cut, largest bond, cut polytope, polyhedral study, facets}
\section{Introduction}
The problem of finding a maximum cut in a weighted graph, called \MC problem, is well-known in combinatorial optimization, and one of Karp's original 21 NP-complete problems~\cite{karp}. The maximum bond problem (\MB) is obtained from this by adding a connectivity requirement for both sides of the cut.

Formally, considering a graph $G=(V,E)$ with edge weights $c_e$, \MC is the problem of finding a node subset $S\subseteq V$ that 
maximizes $\sum_{e \in \delta(S)}c_e$, where $\delta(S)=\{e \in E: |e \cap S|=1\}$.
Such a set $\delta(S)$ is called a \emph{cut}. It is a \emph{bond}, if both sides of the cut, i.e., $G[S]$ and $G[V \setminus S]$, are connected. \MB is the problem of finding a bond $\delta(S)$ maximizing $\sum_{e \in \delta(S)}c_e$.
This problem is known under a variety of names including maximum minimal cut \cite{parameterized_maximum_cut_connectivity}, largest bond \cite{computing_largest_bond}, connected max cut \cite{bond_K5-e_free}, and maximum connected sides cut problem \cite{bond_series_parallel}.
To avoid confusion with the maximum \emph{one-sided} connected cut problem \cite{MaxBondMaxConnectedCut,parameterized_maximum_cut_connectivity,DBLP:journals/ipl/GandhiHKPS18,DBLP:conf/esa/HajiaghayiKMPS15,DBLP:journals/tcs/HajiaghayiKMPS20} we stick to the naming maximum bond.
The research on \MB is driven by applications like image segmentation~\cite{application_image}, forest planning~\cite{application_forest}, and computing market splittings~\cite{application_market}.

\MB is known to be NP-complete \cite{Approx_Intractability_MaxCut_Vartiants}, even when restricted to $3$-connected planar graphs \cite{Approx_Intractability_MaxCut_Vartiants} or bipartite planar graphs \cite{computing_largest_bond, MaxBondMaxConnectedCut,parameterized_maximum_cut_connectivity}. 
Conversely, \MB is solvable in linear time on series-parallel graphs \cite{bond_series_parallel}.
Moreover there is an extensive study of the parameterized complexity of \MB \cite{computing_largest_bond, MaxBondMaxConnectedCut,parameterized_maximum_cut_connectivity}.
On the other hand, it is known that there is no constant factor approximation (if $\text{P} \neq \text{NP}$) \cite{computing_largest_bond,MaxBondMaxConnectedCut}.

Besides the mentioned algorithmic results there is only little knowledge on the maximum bond in a general graph:
Ding, Dziobiak and Wu proved that the maximum bond in any simple $3$-connected graph $G$ with $|V(G)|=n$ has size at least $\frac{2}{17}\sqrt{\log n}$ and conjectured that the maximum bond in such a graph has size $\Omega(n^{\log_3 2})$ \cite{Bond_3connected}.
This conjecture was verified by Flynn for several graph classes including planar graphs \cite{VerifiedConjecture} but remains open in general.

In this work we consider \MB from a polyhedral viewpoint. To this end we introduce the \emph{bond polytope} $\Bond(G)$ which is closely related to the intensively studied cut polytope $\CutP(G)$ \cite{bipartitesubgraph,OnTheCutPolytope,DL1,DL2,GeometryOfCutsAndMetrics,doi:10.1111/itor.12194,GeneralizedCutAndMetricPolytopes}.
The bond polytope (resp. cut polytope) is defined as the convex hull of the indicator vectors $x^\delta$ of all bonds (resp. cuts) $\delta$ in $G$, given by 
$$x^{\delta}_e= \begin{cases}	1, &\text{ if } e \in \delta,\\
0, &\text{ else}.			\end{cases}$$

A big part of the research on cut polytopes is the investigation of facet-defining inequalities for specific graph classes such as complete graphs or circulants, see, e.g., \cite{DL1,DL2,GeometryOfCutsAndMetrics,POLJAK1992379} for an overview. 
This is motivated by the fact that such inequalities give rise to valid inequalities for all graphs containing their support graph.
Moreover, there is an extensive study of the effect of graph modifications (such as node splitting and edge contractions) on cut polytopes and their facets \cite{OnTheCutPolytope}.
Complete linear descriptions of cut polytopes are known for the classes of graphs obtained by excluding $K_5$ or $K_{3,3}$ as a minor \cite{OnTheCutPolytope,CutPolytope_K33}. For $K_5$-minor free graphs this description is given by inequalities associated to edges and cycles \cite{OnTheCutPolytope}. Considering the polytope defined by these inequalities for an arbitrary graph $G$, we obtain the (semi-)metric polytope, a  well studied relaxation of $\CutP(G)$ \cite{DL1,DL2,GeometryOfCutsAndMetrics,doi:10.1111/itor.12194,GeneralizedCutAndMetricPolytopes}.

In this paper, we start the structural study of bond polytopes.

\mypar{Our contribution and organization of this paper.}
	After recalling some basic definitions in \Cref{sec:Preliminaries}, 
	we discuss the relation of cut- and bond polytopes in \Cref{sec:FirstProperties}. This includes the observation that several fundamental properties of cut polytopes do not carry over to bond polytopes.
	
	In \Cref{sec:ConstructingFacetsFromFacets} we study how graph modifications (such as node splitting and edge contraction) effect bond polytopes and their facets.
	
	In \Cref{sec:Reduction} we present an efficient (linear-time) reduction of \MB on arbitrary graphs to \MB on $3$-connected graphs. This algorithm can be used as an argument, why one can focus on	investigating bond polytopes of 3-connected graphs.
	
	Next, we turn our attention to edge- and cycle inequalities in bond polytopes, as they are known to be highly important in cut polytopes.
	In \Cref{sec:Non-Interleaved} we present \emph{non-interleaved} cycle inequalities, a class of facet-defining inequalities arising from a special class of cycles. After this, we discuss a generalization of such inequalities as well as edge inequalities in \Cref{sec:interleaved_cycles_and_edges}.
	
	We close this work by considering $(K_5-e)$-minor free graphs in \Cref{sec:K5-e_Minor}. We present a linear description of all bond polytopes of planar $3$-connected such graphs. 
	Combined with our reduction strategy from \Cref{sec:Reduction}, we can complement this with a
	linear-time algorithm for such graphs, improving (and fixing, see below) the current quadratic-time algorithm.

\mypar{A note on computing maximum bonds in $\boldsymbol{(K_5-e)}$-minor free graphs.}	

While the main focus of our work herein is to better understand the bond polytope and its facets, our results have direct algorithmic consequences---among others, on graphs with forbidden $(K_5-e)$-minor.
Recently, an algorithm was proposed to solve \MB on such graphs in quadratic time \cite{bond_K5-e_free}.
The key idea is to consider the graph's decomposition via 2-sums, and solving each component in quadratic time. However, the proposed algorithm's description is quite rough (e.g., it does not discuss how to efficiently obtain the 2-sum decomposition to start with) and contains a severe flaw, leading to an exponential instead of a quadratic overall running time: In~\cite{bond_K5-e_free} only the case of two subgraphs, joined via a 2-sum, is discussed (either their common vertices
are in the same partition side or not). One can hence compute both cases for both subgraphs and find the best choice. It is never discussed how to proceed in the case of more than two components. In fact, chaining this algorithm would yield an exponential running time of the order of $\Omega(2^c)$ for $c$ components; $(K_5-e)$-minor free graphs can have $c\in\Theta(n)$.

We resolve all these issues by giving an algorithm for the considered graph class that only requires linear running time.

\section{Preliminaries}\label{sec:Preliminaries}
In this section we provide some basic background on graphs and polytopes. Then, we recapitulate some known results on cut polytopes. For notation and results related to graphs we refer to \cite{diestel}, for those related to polytopes to \cite{BrunsGubeladze,ziegler}.

\mypar{Graphs.}
We only consider undirected graphs. A graph is \emph{simple}, if it does neither have parallel edges, nor self-loops.
Unless specified otherwise, we only consider simple connected graphs in the following. For $k \in \mathbb{N}$, let $[k]=\{1, \dots , k\}$.
Given a graph $G=(V,E)$ we also write $V(G)$ and $E(G)$ for its set of nodes $V$ and its set of edges $E$, respectively.
For $v,w \in V(G)$, we let $vw=\{v,w\}$ be the edge between $v$ and $w$.

A \emph{path} of length $k$ is a sequence of edges $e_1, \dots, e_k$ with $e_i =v_{i-1} v_i$ such that $v_i \neq v_j$ for $0 \leq i < j \leq k$.
Such a sequence but with $v_0 = v_k$ is a \emph{cycle} of length $k$; a cycle of length 3 is a \emph{triangle}.
A graph $H$ is a \emph{subgraph} of $G$, denoted by $H\subseteq G$, if (after possibly renaming) $V(H)\subseteq V(G)$ and $E(H) \subseteq E(G)$.
Given a subset $W \subseteq V$, the subgraph \emph{induced} by $W$ is the graph $G[W]=(W, \{uv \in E: u,v \in W \})$.
If an induced subgraph forms a cycle, it is an \emph{induced cycle} and as such \emph{chordless}.
We use $C_n$ to denote the cycle of length~$n$ and $K_n$ for the complete graph on $n$ nodes.
We denote the graph obtained from $G$ by deleting vertices $v_1,\dots v_k$ (resp. edges $e_1,\dots e_k$) by $G-\{v_1,\dots,v_k\}$ (resp. $G-\{e_1,\dots,e_k\}$). If we remove a single node $v$ (resp. edge $e$), we might just write $G-v$ (resp. $G-e$).
The graph $G/e$ is obtained from $G$ by \emph{contracting} the edge $e=vw$, i.e., the nodes $v$ and $w$ are identified, the arising self-loop is deleted and parallel edges are merged. $G$ contains an $H$-\emph{minor}, if $H$ can be obtained from $G$ by contraction and deletion of edges and vertices. Otherwise, $G$ is \emph{$H$-minor-free}.

$G$ is \emph{$k$-connected} if $|V(G)| \geq k+1$ and for each pair of nodes $v,w \in V(G)$ there exist $k$ internally node-disjoint paths from $v$ to $w$.
$1$-connected graphs, as well as the graph consisting of a single node and the empty graph are called \emph{connected}. 
If $G$ is connected but not $2$-connected, there exists some \emph{cut-node} $v \in V(G)$ such that $G-v$ is disconnected.

For two graphs $G$ and $H$, their \emph{union} $G \cup H=(V(G) \cup V(H), E(G) \cup E(H))$ is disjoint if their node sets are; in this case we may write $G \dotcup H$.
Assume two graphs $G$, $H$ contain $K_k$ as a subgraph, for some $k \in \mathbb{N}_{>0}$. The \emph{$k$-sum} (or \emph{clique-sum}) is obtained by taking the union of $G$ and $H$, identifying the $K_k$ subgraphs and possibly also removing edges contained in this specific $K_k$. A $k$-sum is \emph{strict}, if no edges are removed. We denote the strict $k$-sum of $G$ and $H$ by $G \oplus _k H$. Observe, that this notation does not explicitly state the specific $K_k$ in the question.	

\mypar{Bond- and Cut Polytopes.}
A \emph{polytope} $P$ is the convex hull of finitely many points in $\R^d$. 
The \emph{dimension} of $P$ is the dimension of its affine hull. We denote by $\cone(P)$ the \emph{conical hull} of $P$.
A linear inequality $a^\mathsf{T}x \leq b$ where $a \in \R^d$ and $b \in \R$ is a \emph{valid inequality} for $P$ if it is satisfied by all points $x \in P$.  
It is \emph{homogeneous} if $b=0$ and it is \emph{tight} if there is some $p \in P$ with $a^\mathsf{T}p=b$. We use the shorthand $\{a^\mathsf{T}x\leq b\}$ for $\{x \in \R^E : a^\mathsf{T}x \leq b\}$ and its analogon for equalities.
A (proper) \emph{face} of $P$ is a (non-empty) set of the form $P \cap \{a^\mathsf{T}x = b\}$ for some valid inequality $a^\mathsf{T} x \leq b$ with $a\neq \bnull$.
Faces of dimension~$0$ and $\dim(P)-1$ are \emph{vertices} and \emph{facets}, respectively. 
For each face $F \subseteq P$ there is a facet $F' \subseteq P$ dominating it, i.e., $F \subseteq F'$.
Two vertices are \emph{adjacent} if their convex hull is a $1$-dimensional face of $P$. 
Given a face $F \subseteq P$, its \emph{relative interior} $\relint(F)$ is the interior of $F$ with respect to its embedding into its affine hull.

A tight inequality $a^\mathsf{T}x \leq b$ is \emph{facet-defining} if $P\cap\{a^\mathsf{T}x = b\}$ is a facet of $P$.
Each polytope can be represented as the bounded intersection of finitely many closed half-spaces, i.e., $P$ admits a \emph{linear description} $P=\{x \in \R^d: Ax \leq {\bf b}\}$ for some matrix $A\in \R^{m \times d}$ and some vector ${\bf b}\in \R^m$.
This is given, e.g., by taking the system of all facet-defining inequalities.
\medskip

We now define the main objects of study in this paper.
Given a graph $G=(V,E)$ and a subset $S \subseteq V$, the set $\delta=\delta_G(S)=\delta_G(V\setminus S)=\{e \in E: |e \cap S| =1 \}$ is a \emph{cut} in $G$. We may omit the subscript when the graph is clear from the context.
If $G$ is connected, there are $2^{|V|-1}$ pairwise different cuts. The cut $\delta(S)$ is a \emph{bond} if the two graphs $G[S]$ and $G - S$ are connected.
To each cut $\delta$ in $G$ we associate its indicator vector $x^\delta \in \R^E$ given by
$$x^\delta_e= \begin{cases}	1, & \text{ if } e \in \delta;\\
0, & \text{ else.}			\end{cases}$$
The \emph{cut polytope}\footnote{Sometimes in the literature the cut polytope is denoted by $\CutP^\square$, while $\CutP$ denotes the cut cone~\cite{DL1,DL2,GeometryOfCutsAndMetrics}.} and the \emph{bond polytope} of $G$ are defined as
\begin{align*}
	\CutP(G)&=\conv(\{ x^\delta: \delta \text{ is a cut in } G \})\subseteq \R^E \text{ and} \\
	\Bond(G)&=\conv(\{ x^\delta: \delta \text{ is a bond in } G \})\subseteq \R^E \text{, respectively.}
\end{align*}
The cut polytope has dimension $\dim(\CutP(G))=|E(G)|$ \cite{bipartitesubgraph}.
Given a valid inequality $a^\mathsf{T}x \leq b$ of $\CutP(G)$ or $\Bond(G)$ its \emph{support graph} $\supp(a)\subseteq G$ is the subgraph of $G$ induced by the edge set $\{e \in E(G): a_e \neq 0\}$.

If $\supp(a)$ is a single edge, the inequality is called an \emph{edge inequality}. The \emph{homogeneous edge inequality} associated to an edge $e \in E(G)$ is $-x_e \leq 0$.

\section{First Properties and Comparison to $\boldsymbol{\CutP(G)}$}\label{sec:FirstProperties}
We start the study of bond polytopes by investigating their relation to cut polytopes. Afterwards we discuss whether some
fundamental results on cut polytopes carry over to bond polytopes.

We first observe that since by definition $\Bond(G) \subseteq \CutP(G)$ for any graph $G$, every facet-defining inequality of $\CutP(G)$ is valid for $\Bond(G)$.
In \cite{OnTheCutPolytope} it was shown that $1$-dimensional faces of $\CutP(G)$ can be characterized by bonds.

\begin{prop}\cite[Theorem 4.1]{OnTheCutPolytope}\label{prop:AdjacencyInCutP}
	Let $G=(V,E)$ be a connected graph and $\delta, \gamma \subseteq E$ be cuts. Then $x^\delta$ and $x^\gamma$ are the vertices of a $1$-dimensional face of $\CutP(G)$ if and only if their symmetric difference $\delta \triangle \gamma$ is a bond.
\end{prop}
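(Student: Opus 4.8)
The plan is to exploit the standard switching symmetry of $\CutP(G)$ to reduce to the case where one of the two cuts is empty, and then to translate the polytopal adjacency condition into a purely combinatorial statement about cuts contained in a bond.

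First I would recall two facts about cuts in a connected graph: they are in bijection with the vertices of $\CutP(G)$, and they form a group under symmetric difference with $\delta(S)\triangle\delta(T)=\delta(S\triangle T)$. Moreover, for any fixed cut $\delta_0$, the affine involution $\sigma_{\delta_0}$ that flips exactly the coordinates indexed by $\delta_0$ (i.e.\ $x_e\mapsto 1-x_e$ for $e\in\delta_0$ and $x_e\mapsto x_e$ otherwise) maps $\CutP(G)$ onto itself and sends the vertex $x^\delta$ to $x^{\delta\triangle\delta_0}$. Being an affine automorphism, it preserves the face lattice, hence adjacency. Applying $\sigma_\delta$, I may assume $\delta=\bnull$ is the empty cut; writing $\beta:=\delta\triangle\gamma$, which is left unchanged as a set, the claim becomes: $\bnull$ and $x^\beta$ are adjacent in $\CutP(G)$ if and only if $\beta$ is a bond.

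Next I would use the standard midpoint characterization of adjacency: two distinct vertices $u,v$ of a polytope span a $1$-dimensional face if and only if the midpoint $\frac12(u+v)$ admits only the trivial representation $\frac12u+\frac12v$ as a convex combination of vertices. For the \emph{if} direction, suppose $\beta=\delta(S)$ is a bond and $\frac12x^\beta=\sum_k\lambda_k x^{\delta_k}$ with all $\lambda_k>0$. Inspecting a coordinate $e\notin\beta$ gives $\sum_k\lambda_k x^{\delta_k}_e=0$, forcing $\delta_k\subseteq\beta$ for every $k$. The crucial combinatorial step is then the lemma: a cut $\delta(T)\subseteq\delta(S)$ with $\delta(S)$ a bond must satisfy $T\in\{\emptyset,S,V\setminus S,V\}$. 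Indeed, no edge inside $G[S]$ and no edge inside $G[V\setminus S]$ lies in $\delta(S)$, hence none lies in $\delta(T)$; since $G[S]$ and $G[V\setminus S]$ are each connected, each of $S$ and $V\setminus S$ must lie entirely on one side of $T$. This yields $\delta_k\in\{\emptyset,\beta\}$, so the representation is trivial and the two vertices are adjacent.

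For the \emph{only if} direction I would argue the contrapositive. If $\beta=\delta(S)$ is not a bond, then without loss of generality $G[S]$ is disconnected, so $S=A\dotcup B$ with $A,B$ nonempty and no edge between them. Then $\delta(A)\cap\delta(B)=\emptyset$ and $\delta(A)\cup\delta(B)=\delta(S)=\beta$, whence $x^\beta=x^{\delta(A)}+x^{\delta(B)}$ and $\frac12x^\beta=\frac12x^{\delta(A)}+\frac12x^{\delta(B)}$. Since connectivity of $G$ forces $\delta(A),\delta(B)\notin\{\emptyset,\beta\}$, this is a nontrivial representation of the midpoint, so $\bnull$ and $x^\beta$ are not adjacent. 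I expect the main obstacle to be the careful bookkeeping in the lemma ``a cut inside a bond is trivial'' together with verifying the edge cases (that $A,B$ are genuinely proper and nonempty) which guarantee that $\delta(A),\delta(B)$ are new vertices; the polytopal midpoint criterion itself is routine.
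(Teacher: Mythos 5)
The paper does not prove this proposition at all --- it is quoted verbatim from the cited source (Barahona--Mahjoub, Theorem~4.1), so there is no in-paper proof to compare against. Your argument is correct and is essentially the classical one from that source: reduce via the switching automorphism $\sigma_\delta$ to the case $\delta=\emptyset$, then characterize adjacency of $\bnull$ and $x^{\beta}$ by whether $\beta$ decomposes as a disjoint union of two nonempty cuts, which happens exactly when one side of $\beta$ is disconnected; the only (standard, harmless) caveat is that the statement implicitly assumes $\delta\neq\gamma$, since otherwise $\delta\triangle\gamma=\emptyset$ is a bond but a single vertex is not a $1$-dimensional face.
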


As an almost immediate consequence we get an easy criterion for a vertex of $\CutP(G)$ being the incidence vector of a bond.

\begin{thm}\label{thm:Vertices}
	Let $G=(V,E)$ be a connected graph. Then, the following hold:
	\begin{enumerate}[(i)]
		\item The vertices of $\Bond(G)$ are $\bnull$ and its neighbors in $\CutP(G)$. In particular, $\cone(\CutP(G))=\cone(\Bond(G))$.
		\item $\dim\Bond(G) = |E|$.
		\item A homogeneous inequality $a^\mathsf{T}x \leq 0$ is facet-defining for $\Bond(G)$ if and only if it is facet-defining for $\CutP(G)$.
	\end{enumerate}
\end{thm}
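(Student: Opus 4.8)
The plan is to reduce all three parts to \Cref{prop:AdjacencyInCutP} together with one structural fact---that $\Bond(G)$ and $\CutP(G)$ generate the same cone---and then to exploit that the inequality in (iii) is homogeneous.

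For (i), I would first argue that the vertices of $\Bond(G)$ are \emph{exactly} the incidence vectors $x^\delta$ of bonds $\delta$: each such $x^\delta$ is a $0/1$-vector, hence a vertex of the cube $[0,1]^E\supseteq\CutP(G)\supseteq\Bond(G)$, and a cube vertex contained in a subpolytope is a vertex of that subpolytope. It then remains to identify this vertex set. The empty cut $\delta(\emptyset)=\emptyset$ is a bond (as $G[\emptyset]$ is the empty graph, connected by convention, and $G-\emptyset=G$ is connected), yielding the vertex $\bnull=x^\emptyset$. For a nonempty cut $\delta$, \Cref{prop:AdjacencyInCutP} applied with $\gamma=\emptyset$ shows that $\delta=\delta\triangle\emptyset$ is a bond if and only if $x^\delta$ and $\bnull$ form a $1$-dimensional face of $\CutP(G)$, i.e.\ iff $x^\delta$ is a neighbor of $\bnull$; this is exactly the asserted description. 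For the cone identity I would invoke the standard fact that the cone generated by a polytope from one of its vertices is generated by the rays towards the neighbors of that vertex. Applied to the vertex $\bnull$ of $\CutP(G)$, its neighbors $N$ are vertices of $\Bond(G)$ by the previous step, so $\cone(\CutP(G))=\cone(N)\subseteq\cone(\Bond(G))$, while $\Bond(G)\subseteq\CutP(G)$ gives the reverse inclusion. Part (ii) is then immediate: $\dim\CutP(G)=|E|$ and $\bnull\in\CutP(G)$ force $\cone(\CutP(G))$ to span $\R^E$, hence so does $\cone(\Bond(G))$, and since $\bnull\in\Bond(G)$ its affine hull coincides with this span.

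The crux is (iii), where homogeneity is precisely what lets the cone picture work, since the hyperplane $H=\{a^\mathsf{T}x=0\}$ passes through $\bnull$. I would first note that a homogeneous inequality is valid for $\Bond(G)$ iff it is valid for $\cone(\Bond(G))=\cone(\CutP(G))$ iff it is valid for $\CutP(G)$, so validity transfers both ways. Assuming validity, set $F_P=P\cap H$ for $P\in\{\Bond(G),\CutP(G)\}$ and observe $\bnull\in F_P$. The key step is the identity $\cone(F_P)=\cone(P)\cap H$, verified directly by scaling representatives; together with $\cone(\Bond(G))=\cone(\CutP(G))$ it gives $\cone(F_{\Bond(G)})=\cone(F_{\CutP(G)})$. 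Since each $F_P$ contains $\bnull$, we have $\dim F_P=\dim\cone(F_P)$, whence $\dim F_{\Bond(G)}=\dim F_{\CutP(G)}$ and one face is a facet exactly when the other is. I expect the only genuine subtlety to lie in this last paragraph: establishing $\cone(P\cap H)=\cone(P)\cap H$ and the equality $\dim F_P=\dim\cone(F_P)$, both of which hinge on $\bnull\in F_P$---i.e.\ on the homogeneity of the inequality---and would fail for a general (non-homogeneous) facet.
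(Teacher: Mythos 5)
Your proposal is correct and follows exactly the route the paper intends: part (i) comes from applying \Cref{prop:AdjacencyInCutP} with $\gamma=\emptyset$ together with the fact that $\emptyset$ is a bond, and (ii) and (iii) are then deduced from the cone equality in (i). The paper compresses all of this into two sentences; your write-up simply supplies the details (cube-vertex argument, tangent cone at $\bnull$, and the identity $\cone(P\cap H)=\cone(P)\cap H$ for the homogeneous hyperplane $H$), all of which are sound.
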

\begin{proof}
	Statement {\it (i)} follows directly from \Cref{prop:AdjacencyInCutP}  and the fact that $\emptyset$ is a bond. Now, (ii) and (iii) are implied by {\it (i)}.
\end{proof}

Given a graph $G$, an edge $e \in E(G)$, and a set $S \subseteq V(G)$, the incidence vector of $\delta_{G-e}(S)$ is obtained from the incidence vector of $\delta_G(S)$ by removing the coordinate corresponding to $e$. As a consequence $\CutP(G-e)$ is the projection of $\CutP(G)$ onto the hyperplane $\{x_e=0\}$. 
The next example shows that this does not carry over to bond polytopes.
\begin{ex}
	For any $e=vw \in E(K_4)$ the cut $\delta_{K_4}(\{v,w\})$ is a bond in $K_4$.
	However, $\delta_{K_4-e}(\{v,w\})$ is a cut but no bond in $K_4-e$.
	In particular, $\Bond(K_4-e)$ is not the projection of $\Bond(K_4)$.
	~\hfill$\blacktriangleleft$
\end{ex}

Considering a graph $G$ and some facet-defining inequality $a^\mathsf{T}x \leq b$ of $\CutP(G)$ it is known that $a^\mathsf{T}x \leq b$ is a facet of $\CutP(\supp(a))$. This is not true in general for bond polytopes as shown in the following example.

\begin{ex}
	Considering $C_6 \subseteq K_{3,3}$, the inequality $\sum_{e \in E(C_{6})}x_e \leq 4$ is not even tight for $\Bond(C_6)$ but facet-defining for $\Bond(K_{3,3})$. Indeed, this example generalizes to $C_{2n}\subseteq V_n$ for arbitrary $n \geq 3$ (see \Cref{sec:interleaved_cycles_and_edges} for the definition of the Wagner graph~$V_n$).
	~\hfill$\blacktriangleleft$
\end{ex}

Conversely, it is well known that if $H \subseteq G$ is a subgraph, the 0-lifting, i.e., the lifting by taking the induced inequality in $\R^{E(G)}$, of each valid inequality of $\CutP(H)$ is valid for $\CutP(G)$. Again, this also does not carry over to bond polytopes.

\begin{ex}\label{ex:0lifting}
	Consider the graphs in \Cref{fig:Counterexample0lifting} and denote the outer (blue) cycle by~$C$. Then $\sum_{e \in E(C)} x_e \leq 2$ defines a facet of $\Bond(G)$, but this inequality is not even valid for $\Bond(G+e)$, as the (red) square nodes induce a bond $\delta$ with $|\delta \cap E(C)| >2$.~\hfill$\blacktriangleleft$
\end{ex}

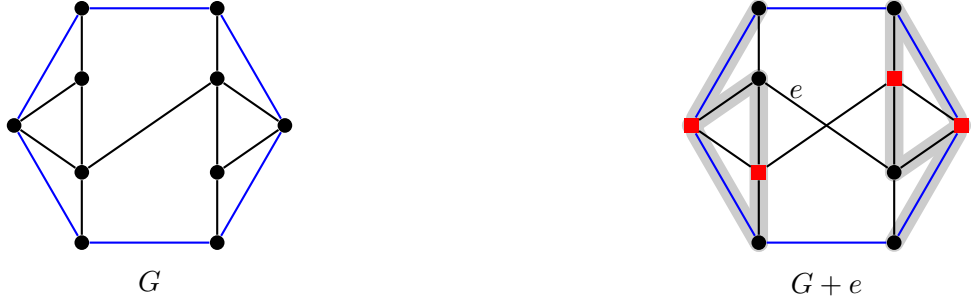
\begin{figure}
	\centering
	\begin{subfigure}[c]{.4\textwidth}
		\centering
			\begin{tikzpicture}
			\foreach \phi in {1,...,6}{
				\node[circle,fill, scale=0.5] (\phi) at (360/6 * \phi:1.8cm) {};
			}
			\draw[blue] (1)--(2)--(3)--(4)--(5)--(6)--(1);
			\node[circle,fill, scale=0.5] (a) at ($(4)!0.3!(2)$) {};
			\node[circle,fill, scale=0.5] (b) at ($(2)!0.3!(4)$) {};
			\node[circle,fill, scale=0.5] (c) at ($(5)!0.3!(1)$) {};
			\node[circle,fill, scale=0.5] (d) at ($(1)!0.3!(5)$) {};
			\draw (2) --(b)--(a)--(4);
			\draw (1) --(d)--(c)--(5);
			\draw (a)--(d);
			\draw (3) --(a);
			\draw (3) -- (b);
			\draw (6) -- (c);
			\draw (6) --(d);
			\end{tikzpicture}
		\caption*{$G$}
	\end{subfigure}
	\hfill
	\begin{subfigure}[c]{.4\textwidth}
		\centering
			\begin{tikzpicture}
			\node[circle,fill, scale=0.5] (1) at (360/6 * 1:1.8cm) {};
			\node[circle,fill, scale=0.5] (2) at (360/6 * 2:1.8cm) {};
			\node[red,rectangle,fill, scale=0.7] (3) at (360/6 * 3:1.8cm) {};
			\node[circle,fill, scale=0.5] (4) at (360/6 * 4:1.8cm) {};
			\node[circle,fill, scale=0.5] (5) at (360/6 * 5:1.8cm) {};
			\node[red,rectangle,fill, scale=0.7] (6) at (360/6 * 6:1.8cm) {};
			
			\draw[blue] (1)--(2)--(3)--(4)--(5)--(6)--(1);
			\node[red,rectangle,fill, scale=0.7] (a) at ($(4)!0.3!(2)$) {};
			\node[circle,fill, scale=0.5] (b) at ($(2)!0.3!(4)$) {};
			\node[circle,fill, scale=0.5] (c) at ($(5)!0.3!(1)$) {};
			\node[red,rectangle,fill, scale=0.7] (d) at ($(1)!0.3!(5)$) {};
			\draw (2) --(b)--(a)--(4);
			\draw (1) --(d)--(c)--(5);
			\draw (a)--(d);
			\draw (b)--node[xshift=-4mm, yshift=4.5mm]{\small $e$}(c); 
			\draw (3) --(a);
			\draw (3) -- (b);
			\draw (6) -- (c);
			\draw (6) --(d);
			\begin{pgfonlayer}{bg}
				\draw[gray!40,line width=7pt,line cap=round,rounded corners] (b.center)--(a.center)--(4.center);
				\draw[gray!40,line width=7pt,line cap=round,rounded corners] (1.center)--(d.center)--(c.center);
				\draw[gray!40,line width=7pt,line cap=round,rounded corners] (2.center)--(3.center)--(4.center);
				\draw[gray!40,line width=7pt,line cap=round,rounded corners] (5.center)--(6.center)--(1.center);
				\draw[gray!40,line width=7pt,line cap=round,rounded corners] (3.center)--(b.center);
				\draw[gray!40,line width=7pt,line cap=round,rounded corners] (6.center)--(c.center);
			\end{pgfonlayer}
			\end{tikzpicture}
		\caption*{$G+e$}
	\end{subfigure}
	\caption{Graphs from \Cref{ex:0lifting}. Marked edges in $G+e$ are those contained in the bond.}
	\label{fig:Counterexample0lifting}
\end{figure}

In contrast to this, contracting an edge $e$ corresponds to intersecting the bond polytope with the hyperplane $\{x_e=0\}$ as it is the case for cut polytopes.

\begin{obs}
	Let $G$ be a graph and $e\in E(G)$. Then, $\Bond(G/e)= \Bond(G) \cap \{x_e=0\}$.

\end{obs}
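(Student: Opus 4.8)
The plan is to exploit that $\Bond(G) \cap \{x_e = 0\}$ is a \emph{face} of $\Bond(G)$ and then match its vertices with the bonds of $G/e$. First I would note that every incidence vector satisfies $0 \le x^\delta_e \le 1$, so the homogeneous edge inequality $-x_e \le 0$ is valid for $\Bond(G)$ and the hyperplane $\{x_e=0\}$ is its tight hyperplane. Hence $F := \Bond(G) \cap \{x_e = 0\}$ is a face (nonempty, since $\bnull = x^\emptyset \in F$). Because $\Bond(G)$ is a $0/1$-polytope whose vertex set is exactly $\{x^\delta : \delta \text{ a bond in } G\}$ (each $0/1$ point is a vertex of the cube $[0,1]^E$, hence an extreme point of $\Bond(G)$), the face $F$ is the convex hull of precisely those vertices lying on $\{x_e=0\}$, that is, $F = \conv\{x^{\delta_G(S)} : \delta_G(S) \text{ a bond with } e \notin \delta_G(S)\}$.

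Next I would set up a bijection between the bonds of $G$ avoiding $e = vw$ and the bonds of $G/e$. Writing $\tilde v$ for the node of $G/e$ obtained by identifying $v$ and $w$, a cut $\delta_G(S)$ with $e \notin \delta_G(S)$ has $v,w$ on the same side; collapsing that side to use $\tilde v$ yields a bipartition of $V(G/e)$ and hence a cut $\delta_{G/e}(\tilde S)$, while conversely splitting $\tilde v$ back into the edge $vw$ recovers $S$. The combinatorial heart is checking that this respects the bond property: contracting an edge inside a connected induced subgraph keeps it connected, and expanding a vertex back into the present edge $vw$ (redistributing incident edges) does too, while the opposite side is literally untouched. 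Thus each side of $\delta_G(S)$ is connected if and only if the corresponding side of $\delta_{G/e}(\tilde S)$ is, giving a bijection that, on incidence vectors, simply drops the coordinate $e$.

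The step I expect to require the most care is the coordinate identification $\R^{E(G/e)} \cong \{x_e = 0\} \subseteq \R^{E(G)}$ in the presence of parallel edges created by the contraction: if $v$ and $w$ share a neighbour $u$, the edges $vu$ and $wu$ merge in $G/e$ but remain separate coordinates in $\R^{E(G)}$. Here I would observe that on every vertex of $F$ the endpoints $v,w$ lie on the same side, forcing $x_{vu} = x_{wu}$; being a linear equation, this then holds on all of $F$, so $F$ lives in the subspace where all such parallel pairs agree, which is exactly the image of $\R^{E(G/e)}$ under the natural merging map. Identifying coordinates accordingly, the vertex description of $F$ obtained in the first paragraph matches the generating set of $\Bond(G/e)$ vertex-for-vertex, yielding $\Bond(G/e) = \Bond(G) \cap \{x_e = 0\}$.
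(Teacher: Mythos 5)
Your proof is correct; the paper states this as an unproven observation, and your argument (identifying the face $\Bond(G)\cap\{x_e=0\}$ with the convex hull of incidence vectors of bonds avoiding $e$, and matching these bijectively with bonds of $G/e$ via contraction/expansion of the common side) is exactly the natural justification the authors leave implicit. Your extra care about merged parallel edges forcing $x_{vu}=x_{wu}$ on the face is a genuine subtlety of the coordinate identification that the paper glosses over, and you handle it correctly.
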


The most prominent symmetries of cut polytopes are given by graph automorphisms and \emph{switchings}.
\begin{lem}[Switching Lemma, see {\cite[Corollary 2.9.]{OnTheCutPolytope}}]
	Let $G=(V,E)$ be a graph and $a^\mathsf{T} x \leq b$ be a facet-defining inequality for $\CutP(G)$. Let $W \subseteq V$, and set 
	$b'= b - \sum_{e \in \delta(W)}a_e$ and 
	$a'_e=(-1)^{\mathds{1}[e \in \delta(W)]}\cdot a_e$ 
	for all $e\in E$.
	Then $(a')^\mathsf{T} x \leq b'$ defines a facet of $\CutP(G)$.
\end{lem}
While graph automorphisms clearly give rise to symmetries of bond polytopes, switching does not in general:

\begin{obs}
	Let $G=(V,E)$ be a graph, $W \subseteq V(G)$, and $a^\mathsf{T}x \leq b$ be facet-defining for $\Bond(G)$.
	We consider the following situations:
	\begin{enumerate}[(i)]
		\item If $b=0$ and $\delta(W)$ is a bond satisfying $a^\mathsf{T}x^{\delta(W)} =0$, switching $a^\mathsf{T}x \leq b$ at $W$ gives a facet of $\Bond(G)$.
		
		\item If $b=0$ and $\delta(W)$ is a bond with $a^\mathsf{T}x^\delta < 0$, the inequality obtained by switching $a^\mathsf{T}x\leq 0$ at $W$ is not facet-defining for $\Bond(G)$ in general.
		
		\item If $b \neq 0$, switching $a^\mathsf{T}x \leq b$ at a node set $W$ does not define a facet of $\Bond(G)$ in general. It might not even be valid for $\Bond(G)$ (even if $\delta(W)$ is a bond).
	\end{enumerate}
\end{obs}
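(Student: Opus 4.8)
The three parts ask for different things: part (i) is a genuine implication that I would deduce from the tools already assembled, while the phrase \qq{in general} in (ii) and (iii) signals that these are settled by explicit counterexamples. The object tying everything together is the affine involution $\sigma_W\colon\R^E\to\R^E$ underlying the Switching Lemma, which permutes all cut vectors via $x^{\delta(S)}\mapsto x^{\delta(S\triangle W)}$ and satisfies $a^\mathsf{T}\sigma_W(x)-b=(a')^\mathsf{T}x-b'$ identically. The crucial structural fact I would keep in view is that $\sigma_W$ preserves the set of all cuts but does \emph{not} respect the bond property, which is exactly what can make switching fail for $\Bond(G)$.

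For (i), the plan is to pass to $\CutP(G)$, switch there, and come back. Since $a^\mathsf{T}x\le 0$ is facet-defining for $\Bond(G)$, part (iii) of \Cref{thm:Vertices} makes it facet-defining for $\CutP(G)$; the Switching Lemma then says the switched inequality $(a')^\mathsf{T}x\le b'$ is facet-defining for $\CutP(G)$. The decisive computation is $b'=0-\sum_{e\in\delta(W)}a_e=-a^\mathsf{T}x^{\delta(W)}$, which vanishes precisely by the hypothesis $a^\mathsf{T}x^{\delta(W)}=0$. As the switched inequality is then again homogeneous, a second application of \Cref{thm:Vertices}\,(iii) returns it as a facet of $\Bond(G)$. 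I note in passing that only $a^\mathsf{T}x^{\delta(W)}=0$ enters the argument; assuming $\delta(W)$ to be a bond merely ensures that $x^{\delta(W)}$ is a vertex of $\Bond(G)$ lying on the facet in question.

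For (ii), I would take $G=C_4$ and the homogeneous edge inequality $-x_e\le 0$, which is facet-defining for $\CutP(C_4)$ and hence, by \Cref{thm:Vertices}\,(iii), for $\Bond(C_4)$. Switching at an endpoint $W=\{v\}$ of $e$ gives $a^\mathsf{T}x^{\delta(W)}=-1<0$, matching the hypotheses, and turns the inequality into the upper bound $x_e\le 1$. By the Switching Lemma this is a facet of $\CutP(C_4)$; but among the cuts attaining $x_e=1$ the unique non-bond is the four-edge cut (a proper $2$-colouring), which is exactly the vertex lost when passing from $\CutP(C_4)$ to $\Bond(C_4)$. The point I would verify carefully is the resulting dimension drop: only three bond vertices remain on this face, so it has dimension $2<\dim\Bond(C_4)-1=3$, and $x_e\le 1$ is tight but not facet-defining for $\Bond(C_4)$.

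For (iii), I would stay on $C_4$ and start from the cycle facet $\sum_{e\in E(C_4)}x_e\le 2$, which has $b=2\neq 0$ and is facet-defining for $\Bond(C_4)$. Switching at a singleton $W=\{v\}$---whose cut $\delta(W)$ is itself a bond---yields an inequality whose slack at any bond $\delta(S)$ equals the slack of $\sum_{e}x_e\le 2$ at the cut $\delta(S\triangle W)$, by the identity of the first paragraph. Since there is a two-edge bond $\delta(S)$ with $\delta(S\triangle W)$ equal to the four-edge cut, and the latter violates $\sum_{e}x_e\le 2$, the switched inequality is violated by a genuine bond of $C_4$ and is therefore not even valid for $\Bond(C_4)$. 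Among the three parts, the only place calling for real care is the dimension count in (ii); part (i) is a two-line deduction and (iii) needs only the single violating vertex exhibited above.
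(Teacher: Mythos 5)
Your argument is correct. Parts (i) and (iii) follow the paper's own route essentially verbatim: for (i) the paper likewise passes through $\CutP(G)$, notes that switching a homogeneous facet at one of its roots stays homogeneous, and returns via \Cref{thm:Vertices}(iii); for (iii) the paper uses exactly your example, the cycle facet $\sum_{e\in E(C_n)}x_e\le 2$ switched at a single node and violated by $x^{\delta(\{w\})}$ for $w$ non-adjacent to $v$. The only genuine divergence is in (ii): the paper switches the facet $x_e-\sum_{f\ne e}x_f\le 0$ of $\Bond(C_n)$ at a node not incident to $e$ and then invokes the full facet description of $\Bond(C_n)$ (\Cref{thm:FacetDescriptionCn}) to see that the resulting inequality with right-hand side $2$ is not facet-defining, whereas you switch the homogeneous edge inequality $-x_e\le 0$ on $C_4$ at an endpoint of $e$ to obtain $x_e\le 1$ and verify the failure by hand, counting that only the three bonds $\{e_1,e_2\},\{e_1,e_3\},\{e_1,e_4\}$ lie on $\{x_e=1\}$, so the face has dimension $2<3$. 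Your version is self-contained and does not rely on the later linear description of $\Bond(C_n)$, at the price of an explicit dimension count; both counterexamples are valid, and your remark that only $a^\mathsf{T}x^{\delta(W)}=0$ (not the bond property of $\delta(W)$) is used in (i) is also accurate.
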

\begin{proof}
	
	In statement \emph{(i)} we consider the switching of a homogeneous facet of $\CutP(G)$ at a solution of itself. This switching  yields a homogeneous facet of $\CutP(G)$ and thus a facet of $\Bond(G)$.
	
	Statement (ii) and (iii) can be shown via examples; we consider certain facets of $\Bond(C_n)$. The facet description of $\Bond(C_n)$ is discussed in \Cref{thm:FacetDescriptionCn}.
	For statement \emph{(ii)} consider the facet-defining inequality
	$$x_e - \sum_{\substack{f \in E(C_n)\\f \neq e}}x_f \leq 0.$$
	for some $e \in E(C_n)$.
	Switching this at $\{v\}$ for some $v \in V(C_n)$ that is not incident to $e$ we obtain the inequality
	$$x_e + \sum_{f \in \delta(\{v\})}x_f - \sum_{\substack{f \in E(C_n)\setminus \delta(\{v\})\\f \neq e}}x_f \leq 2.$$
	It follows directly from \Cref{thm:FacetDescriptionCn} that the latter does not define a facet.
	
	For statement \emph{(iii)} consider a cycle $C_n$ with $n \geq 4$ and the facet-defining inequality $\sum_{e \in E(C)}x_e \leq 2$ for $\Bond(C_n)$. Switching at an arbitrary node $v \in V(C_n)$ gives the inequality 
	$$\sum_{\substack{e \in E(C_n)\\v \notin e}}x_e-\sum_{\substack{e \in E(C_n)\\v \in e}}x_e \leq 0.$$
	But this is violated by $x^{\delta(\{w\})}$ for each $w \in V(C_n)$ that is not adjacent to $v$.
\end{proof}

\section{Constructing Facets from Facets}\label{sec:ConstructingFacetsFromFacets}
There are extensive studies considering the effect of graph operations (such as node splitting, edge subdivisions, edge contraction, and deletion of edges) on cut polytopes and their facet-defining inequalities \cite{OnTheCutPolytope}. Motivated by this, we start an investigation of the effect of such graph operations on bond polytopes and their facets.

\begin{thm}[Node splitting]\label{thm:nodesplitting}
	Let $G=(V,E)$ be a connected graph, $v \in V$, and $a^\mathsf{T}x \leq b$ be facet-defining for $\Bond(G)$.	
	Obtain $\widebar{G}=(\widebar{V},\widebar{E})$ as follows: replace $v$ by two adjacent nodes $v_1$ and $v_2$ and distribute the edges incident to $v$ arbitrarily among $v_1$ and $v_2$.
	Set $\varphi\colon\widebar{E}\setminus\{v_1v_2\} \to E$ by  
	$$\varphi(e)=\begin{cases}
		e, 	&\text{if } v_1,v_2\notin e, \\
		vw,	&\text{if } e=v_iw\  (i=1,2).
	\end{cases} $$
	Let $\omega$ be the value of a maximum bond in $\widebar{G} - v_1v_2$ separating $v_1$ and $v_2$ with respect to the edge weights given by $a_{\varphi(e)}$.
	Now, set $\widebar{a}$ by
	$$\widebar{a}_e=
	\begin{cases}
	a_{\varphi(e)}, &\text{if } e \neq v_1v_2,\\
	b-\omega,&\text{if $e=v_1v_2$.}\\
	\end{cases} $$
	Then $\widebar{a}^\mathsf{T} x \leq b$ defines a facet of $\Bond(\G)$.
\end{thm}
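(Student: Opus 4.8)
The plan is to set up a correspondence between the bonds of $\G$ and the bonds of $G$ (together with certain bonds separating $v_1$ and $v_2$), then to verify validity of $\a^\mathsf{T}x \le b$ separately on each type of bond, and finally to count affinely independent tight bonds. First I would record the elementary but crucial connectivity fact that splitting a vertex $u$ of a connected graph into two adjacent vertices and distributing its incident edges arbitrarily again yields a connected graph, and conversely that contracting the connecting edge preserves connectivity: any path through $u$ is rerouted through the new edge. Applying this to induced subgraphs shows that a bond $\delta = \delta_\G(S)$ of $\G$ in which $v_1,v_2$ lie on the \emph{same} side (so $v_1v_2 \notin \delta$) corresponds bijectively, via contracting $v_1v_2$, to a bond of $G$; whereas a bond in which $v_1,v_2$ lie on \emph{opposite} sides (so $v_1v_2 \in \delta$) corresponds bijectively to a bond of $\G - v_1v_2$ separating $v_1$ and $v_2$, since the crossing edge $v_1v_2$ never lies inside either side and hence deleting it does not affect the connectivity of $\G[S]$ or $\G[\V \setminus S]$. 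Note also that $\varphi$ is a bijection from $\E \setminus \{v_1v_2\}$ onto $E$, so $|\E| = |E|+1$, and that $\G$ is again simple and connected, whence $\dim \Bond(\G) = |\E|$ by \Cref{thm:Vertices}.

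Next I would check validity. For a bond $\delta$ of the first type, $x^\delta_{v_1v_2}=0$ and $\a^\mathsf{T}x^\delta$ equals $a^\mathsf{T}x^{\delta'}$ for the corresponding bond $\delta'$ of $G$, which is at most $b$. For a bond $\delta$ of the second type, $x^\delta_{v_1v_2}=1$ and
$$\a^\mathsf{T}x^\delta = (b-\omega) + \sum_{e \neq v_1v_2} a_{\varphi(e)}x^\delta_e \le (b-\omega)+\omega = b,$$
because the trailing sum is exactly the $a_{\varphi(\cdot)}$-weight of $\delta$ viewed as a bond of $\G - v_1v_2$ separating $v_1$ and $v_2$, hence at most $\omega$. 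Here I must make sure $\omega$ is actually attained: taking a spanning tree of $\G$ containing $v_1v_2$ and deleting that edge splits it into two subtrees, whose two vertex sets form a bond of $\G$ separating $v_1$ and $v_2$; thus separating bonds exist and $\omega$ is a genuine (finite) maximum, achieved by some bond $\delta^\star$.

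Finally I would produce $|\E| = |E|+1$ affinely independent tight bonds. Since $a^\mathsf{T}x\le b$ is facet-defining for $\Bond(G)$, whose dimension is $|E|$, there are $|E|$ affinely independent bonds $\delta_1,\dots,\delta_{|E|}$ of $G$ with $a^\mathsf{T}x^{\delta_i}=b$. Writing each as $\delta_i = \delta_G(S_i)$ with $v \in S_i$ and lifting to $\widebar S_i = (S_i\setminus\{v\})\cup\{v_1,v_2\}$ gives, by the connectivity fact, bonds of $\G$ of the first type; they satisfy $x_{v_1v_2}=0$ and, by the validity computation read with equality, are tight. Their restrictions to $\E\setminus\{v_1v_2\}\cong E$ are precisely the affinely independent vectors $x^{\delta_i}$, so the lifts are affinely independent and all lie in the hyperplane $\{x_{v_1v_2}=0\}$. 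Adjoining the bond $\delta^\star$ from the previous step, which is tight and has $x^{\delta^\star}_{v_1v_2}=1$, yields a point off that hyperplane and hence off the affine hull of the $|E|$ lifts; the resulting $|E|+1$ tight bonds are therefore affinely independent. Since $\a \neq \bnull$ (as $a \neq \bnull$, $\Bond(G)$ being full-dimensional), the face $\Bond(\G)\cap\{\a^\mathsf{T}x=b\}$ is proper, and having dimension at least $|E| = |\E|-1$ it is a facet.

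The main obstacle I expect is the careful bookkeeping of the connectivity correspondence in the first step---in particular verifying both that lifted bonds of $G$ remain bonds of $\G$ and that second-type bonds of $\G$ are exactly the $(v_1,v_2)$-separating bonds of $\G-v_1v_2$---together with confirming that the maximum defining $\omega$ is attained, which is precisely what makes the single extra tight bond $\delta^\star$ available. Once these are in place, both the validity check and the affine-independence count are routine.
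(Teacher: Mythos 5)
Your proposal is correct and follows essentially the same route as the paper's proof: validity is checked separately for bonds containing $v_1v_2$ and those not, and the facet is certified by lifting the $|E|$ affinely independent tight bonds of $G$ into the hyperplane $\{x_{v_1v_2}=0\}$ and adjoining one maximum $(v_1,v_2)$-separating bond attaining $\omega$. Your additional care in verifying that $\omega$ is attained and in spelling out the connectivity correspondence only fills in steps the paper labels as \qq{easy to see}.
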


\begin{proof}
	First, we show that $\widebar{a}^\mathsf{T}x \leq b$ is a valid inequality for $\Bond(\widebar{G})$. 
	Each bond in $\G$ not containing $v_1v_2$ corresponds to a bond in $G$. Hence, it is easy to see that all such bonds satisfy the inequality under consideration.
	Now, let $\delta \subseteq \E$ be a bond with $v_1v_2 \in \delta$. Then 
	$$\widebar{a}^\mathsf{T}x^\delta =
	\a_{v_1v_2}x^\delta_{v_1v_2} + \sum_{\substack{e \in \E\\ e \neq v_1v_2}} \a_ex^\delta_e =
	(b-\omega)x^\delta_{v_1v_2} + \sum_{e \in E}a_ex^{\varphi(\delta)}_e  \leq
	 (b-\omega)+\omega =b.$$
	
	It remains to show that $\a^\mathsf{T}x \leq b$ is indeed facet-defining.
	Let $m=|E|$. Since $a^\mathsf{T}x \leq b$ defines a facet of $\Bond(G)$, there exist $W_1,\dots , W_m \subseteq V$ with $v \notin W_i$ such that  $x^{\delta_G(W_i)}$ satisfies $a^\mathsf{T}x =b$ for each $i \in [m]$ and $x^{\delta_G(W_1)}, \dots, x^{\delta_G(W_m)}$ are affinely independent.
	It is easy to see that $\delta_i=\delta_{\G}(W_i) \subseteq \widebar{E}$ is a bond (in $\widebar{G}$) with $v_1v_2 \notin \delta_i$ satisfying $\widebar{a}^\mathsf{T}x =b$.
	
	Now let $W_0 \subseteq \widebar{V}$ such that $\delta_{\G-{v_1v_2}}(W_0)$ is a bond in $\widebar{G} - v_1v_2$ separating $v_1$ and $v_2$ with $\sum_{e \in \delta_{\G-v_1v_2}(W_0)} a_e = \omega$. 
	Hence, for $\delta_0=\delta_{\G}(W_0) \subseteq  \widebar{E}$ we have
	$\widebar{a}^\mathsf{T} x^{\delta_0}= \omega +a_{v_1v_2}= \omega +b -\omega =b $.
	Since $v_1v_2 \in \delta_0$ and $v_1v_2 \notin \delta_i$ for $1 \leq i \leq m$,  it is easy to see that $x^{\delta_0},x^{\delta_1}, \dots, x^{\delta_m}$ are affinely independent.
\end{proof}

\begin{thm}[Replacing a node by a triangle]
	Let $G=(V,E)$ be a connected graph, $v \in V$, and $a^\mathsf{T}x \leq b$ be facet-defining for $\Bond(G)$.
	Obtain $\widebar{G}=(\V,\E)$ from $G$ by replacing $v$ by a triangle on vertices $\{v_1,v_2,v_3\}$ and distributing the edges incident to $v$ arbitrarily among $v_1$, $v_2$, and $v_3$.

	Set $\varphi\colon\E\setminus\{v_1v_2,v_1v_3,v_2v_3\} \to E$ by
	$$\varphi(e)=\begin{cases}
	e, 	&\text{if } v_1,v_2,v_3\notin e, \\
	vw,	&\text{if } e=v_iw\ (i\in[3]).
	\end{cases}$$
	For $i =1,2,3$, let $S_i \subseteq \V$ such that $S_i \cap \{v_1,v_2,v_3\}=\{v_i\}$ and $\delta_\G(S_i)$ is a maximum bond with respect to edge weight $0$ attached to $v_jv_k$ ($j,k\in [3]$) and $a_{\varphi(e)}$ for each other edge $e$. Denote by $\omega_i$ the weight of $\delta_\G(S_i)$ with respect to these weights.
	Define $\a \in \R^{\E}$ by
	$$\a_e=\begin{cases}
				a_{\varphi(e)}, &\text{if } e \neq v_iv_j\, (i,j \in [3]),\\
				\frac{1}{2}(b-\omega_1-\omega_2+\omega_3), &\text{if } e=v_1v_2,\\
				\frac{1}{2}(b-\omega_1+\omega_2-\omega_3), &\text{if } e=v_1v_3,\\
				\frac{1}{2}(b+\omega_1-\omega_2-\omega_3), &\text{if } e=v_2v_3.
			\end{cases}$$
			Then $\a^\mathsf{T}x \leq b$ defines a facet of $\Bond(\G)$.
\end{thm}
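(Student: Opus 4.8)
The plan is to mirror the proof of \Cref{thm:nodesplitting}, the extra difficulty being that replacing $v$ by a triangle introduces three new coordinates $x_{v_1v_2},x_{v_1v_3},x_{v_2v_3}$ rather than one, so the coefficients must be tuned so that the three ``split'' bonds become tight. I would first record the design identity behind the formula for $\a$: for the bond $\delta_i:=\delta_\G(S_i)$, exactly the two triangle edges incident to $v_i$ are cut while $v_jv_k$ is not, and the non-triangle edges contribute $\omega_i$ (since the triangle edges carry weight $0$ in the definition of $\omega_i$). Hence $\a^\mathsf{T}x^{\delta_i}$ equals $\omega_i$ plus the two corresponding triangle coefficients, and a direct substitution of the three formulas for $\a_{v_1v_2},\a_{v_1v_3},\a_{v_2v_3}$ yields $\a^\mathsf{T}x^{\delta_i}=b$ for $i=1,2,3$. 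This both motivates the coefficients and produces three tight bonds.

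Next I would prove validity by case analysis on how $\{v_1,v_2,v_3\}$ meets the two sides of an arbitrary bond $\delta=\delta_\G(S)$. If all three lie on one side, no triangle edge is cut and contracting the triangle yields a bond in $G$; thus $\a^\mathsf{T}x^\delta=a^\mathsf{T}x^{\varphi(\delta)}\le b$ by validity of the original inequality for $\Bond(G)$. Otherwise exactly one triangle vertex, say $v_i$, is separated from the other two, so precisely $v_iv_j$ and $v_iv_k$ are cut; but then $\delta$ itself separates $v_i$ from $v_j,v_k$, so its weight in the problem defining $\omega_i$ is at most $\omega_i$. As the triangle edges carry weight $0$ there, this bounds the non-triangle contribution to $\a^\mathsf{T}x^\delta$, giving $\a^\mathsf{T}x^\delta\le \a_{v_iv_j}+\a_{v_iv_k}+\omega_i=b$ by the identity above.

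For the facet step I would exhibit $|\E|=|E|+3$ affinely independent tight bonds. Since $a^\mathsf{T}x\le b$ is a facet of $\Bond(G)$, choose $m=|E|$ affinely independent equality bonds $\delta_G(W_1),\dots,\delta_G(W_m)$ with $v\notin W_i$; keeping all three triangle vertices together lifts each $W_i$ to a bond of $\G$ (the side carrying the triangle stays connected because the triangle is connected and inherits all edges formerly at $v$), with all three triangle coordinates equal to $0$ and the same value $b$. Adjoining the split bonds $\delta_1,\delta_2,\delta_3$, whose triangle-coordinate vectors are $(1,1,0),(1,0,1),(0,1,1)$, I would check affine independence by first projecting onto the three triangle coordinates: these vectors are linearly independent (their determinant is $-2$), which forces the coefficients of $\delta_1,\delta_2,\delta_3$ in any affine dependence to vanish, after which affine independence of the lifted $\delta_G(W_i)$ closes the argument. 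As $\dim\Bond(\G)=|\E|$ by \Cref{thm:Vertices}(ii), these $|\E|$ affinely independent tight points certify a facet.

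The step I expect to be the main obstacle is making the construction well-posed, i.e.\ guaranteeing that for each $i$ a bond $\delta_\G(S_i)$ with $S_i\cap\{v_1,v_2,v_3\}=\{v_i\}$ actually exists, so that $\omega_i$ and the split bonds $\delta_i$ are defined. This does not follow from validity and must be argued from connectivity of $\G$---for instance by building a connected vertex set containing $v_i$ but avoiding $v_j,v_k$ whose complement remains connected---with attention to degenerate edge distributions (e.g.\ when $v_i$ receives no original edge or carries a pendant). Everything else is the bookkeeping already present in \Cref{thm:nodesplitting}, now carried out with a three-dimensional rather than one-dimensional family of new coordinates.
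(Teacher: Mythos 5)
Your proposal is correct and follows essentially the same route as the paper, which explicitly states that the proof is analogous to that of \Cref{thm:nodesplitting} with the three bonds $\delta_{\G}(S_1),\delta_{\G}(S_2),\delta_{\G}(S_3)$ supplying the additional tight, affinely independent points; your verification that $\a^\mathsf{T}x^{\delta_i}=b$, the $3$--$0$ versus $2$--$1$ case split for validity, and the linear independence of the triangle-coordinate patterns $(1,1,0),(1,0,1),(0,1,1)$ are exactly the details the paper leaves implicit. The well-posedness issue you flag (existence of the bonds $\delta_\G(S_i)$) is real but resolvable by the construction you sketch, e.g.\ taking $S_i$ to be $v_i$ together with the components of $G-v$ adjacent to $v_i$ but to neither $v_j$ nor $v_k$.
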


\begin{proof}
	The proof of this theorem is analogous to the one of \Cref{thm:nodesplitting}. We only have to utilize that $x^{\delta_G(S_1)},x^{\delta_G(S_2)},x^{\delta_G(S_3)}$ all satisfy $\a^Tx=b$ and are affinely independent to all bond-vectors $x^{\delta_\G(S)}$ with $S \subseteq V$.
\end{proof}

\begin{lem}\label{lem:SolExistance}
	Let $G=(V,E)$ be a connected graph, $e \in E$, and $a^\mathsf{T}x \leq b$ be facet-defining for $\Bond(G)$. 
	Assume that $\{a^\mathsf{T}x \leq b \}\neq \{-x_e\leq 0\}$ and  $a_e \neq 0$.
	Then there exists some bond $\delta$ such that $e \in \delta$ and $a^\mathsf{T}x^\delta=b$.
\end{lem}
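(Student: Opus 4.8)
The plan is to argue by contradiction, exploiting the full-dimensionality of $\Bond(G)$. Write $F = \Bond(G) \cap \{a^\mathsf{T}x = b\}$ for the facet cut out by the inequality, and suppose toward a contradiction that \emph{no} bond $\delta$ with $e \in \delta$ lies on $F$; equivalently, every bond containing $e$ satisfies $a^\mathsf{T}x^\delta < b$ strictly.

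First I would record that the vertices of $F$ are all incidence vectors $x^\delta$ of bonds: $\Bond(G)$ is by definition the convex hull of such vectors, so each of its vertices is one of them, and a face is the convex hull of the vertices it contains. Under the contradiction hypothesis every vertex $x^\delta$ of $F$ has $e \notin \delta$, hence $x^\delta_e = 0$; since $F$ equals the convex hull of these vertices, this forces $F \subseteq \{x_e = 0\}$.

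The key step is then a dimension count. By part (ii) of \Cref{thm:Vertices} we have $\dim \Bond(G) = |E|$, so the facet satisfies $\dim F = |E|-1$ and its affine hull $\mathrm{aff}(F)$ is a hyperplane. This hyperplane is contained in both $\{a^\mathsf{T}x = b\}$ and $\{x_e = 0\}$, each of which has dimension $|E|-1$; two hyperplanes of $\R^{E}$ sharing a common $(|E|-1)$-dimensional affine subspace must coincide, so $\{a^\mathsf{T}x = b\} = \{x_e = 0\}$. Consequently $(a,b)$ is a nonzero multiple of $(e^*,0)$, where $e^* \in \R^{E}$ is the unit vector supported on $e$; that is, $b = 0$ and $a = \lambda e^*$ for some $\lambda \neq 0$.

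It remains to fix the sign of $\lambda$ and close the argument. Full-dimensionality also guarantees a bond $\delta'$ with $x^{\delta'}_e = 1$ (otherwise $\Bond(G) \subseteq \{x_e = 0\}$, contradicting $\dim\Bond(G)=|E|$), so validity of $\lambda x_e \leq 0$ on $\Bond(G)$ forces $\lambda < 0$; after rescaling this yields $\{a^\mathsf{T}x \leq b\} = \{-x_e \leq 0\}$, contradicting the hypothesis. Note that the two standing assumptions, $a_e \neq 0$ and $\{a^\mathsf{T}x \leq b\} \neq \{-x_e \leq 0\}$, are precisely what rules out this degenerate edge facet. I expect the only delicate point to be the hyperplane-coincidence step: one must be careful that $F \subseteq \{x_e=0\}$ together with $\dim F = |E|-1$ genuinely pins down $\mathrm{aff}(F)$ as the full hyperplane $\{x_e=0\}$; once that is in hand, the rest is bookkeeping about which bond-vectors can lie on $F$.
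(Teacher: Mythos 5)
Your proof is correct. It rests on the same pivotal observation as the paper's --- if no bond on the facet $F=\Bond(G)\cap\{a^\mathsf{T}x=b\}$ contains $e$, then $F\subseteq\{x_e=0\}$ --- but the two arguments close the contradiction differently. The paper perturbs the inequality to $a^\mathsf{T}x-\lambda x_e\leq b$ for $\lambda>0$, checks that this stays valid and is tight on exactly the same bonds, and concludes that two distinct valid inequalities would define the same facet. You instead compare affine hulls: since $\dim F=|E|-1$ (using \Cref{thm:Vertices}(ii)), $\mathrm{aff}(F)$ is a hyperplane contained in both $\{a^\mathsf{T}x=b\}$ and $\{x_e=0\}$, so these coincide, forcing $b=0$ and $a=\lambda e^*$ with $\lambda<0$ (the sign coming from the existence of a bond through $e$, again by full-dimensionality), i.e.\ the excluded edge inequality $-x_e\leq 0$. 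Your route has the small advantage of making explicit the step the paper leaves implicit --- namely why ``describing the same face'' is a contradiction, which requires ruling out that $a-\lambda e^*$ is a positive multiple of $a$, and that is precisely the degenerate case $a=\lambda e^*$, $b=0$ --- and as a by-product it shows the hypothesis $a_e\neq 0$ is not actually needed for the conclusion.
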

\begin{proof}
	Assume there is no such $\delta$ and consider the inequality $a^\mathsf{T}x- \lambda x_e \leq b$ for some $\lambda>0$. Validity follows since $a^\mathsf{T}x- \lambda x_e  \leq a^\mathsf{T}x$ for each $x \in [0,1]^{|E|}$. Moreover, by assumption each bond satisfying  $a^\mathsf{T}x = b$ satisfies $a^\mathsf{T}x- \lambda x_e  = b$. Hence, both inequalities describe the same face of $\Bond(G)$ contradicting $a^\mathsf{T}x \leq b$ being facet-defining.
\end{proof}

\begin{thm}[Subdividing an edge]
	Let $G=(V,E)$ be a connected graph and $a^\mathsf{T}x \leq b$ be facet-defining for $\Bond(G)$. Let $e=vw \in E$ with $a_e \leq \frac{b}{2}$.
	
	Obtain $\widebar{G}=(\widebar{V},\widebar{E})$ by splitting $e$ into $e_1=v \widebar{v}$ and $e_2=\widebar{v} w$ (for a new node $\ov$). For $f \in \widebar{E}$ set
	$$\widebar{a}_f=
	\begin{cases}
	a_f, &\text{if } f \in E,\\
	a_e, &\text{if } f \in\{e_1,e_2\}.
	\end{cases} $$
	Then $\widebar{a}^\mathsf{T}x \leq b$ is facet-defining for $\Bond(\widebar{G})$.
\end{thm}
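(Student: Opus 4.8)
The plan is to describe the bonds of $\G$ through their restriction to the two new edges $e_1,e_2$ and to match them against bonds of $G$. A cut $\delta_\G(S)$ can meet $\{e_1,e_2\}$ in exactly three ways. If $e_1,e_2\notin\delta_\G(S)$, then $v,\ov,w$ lie on a common side and $\delta_\G(S)$ corresponds, after undoing the subdivision, to a bond of $G$ avoiding $e$. If exactly one of $e_1,e_2$ is cut, then $v$ and $w$ are separated while $\ov$ sits as a leaf on one side, and this corresponds to a bond of $G$ containing $e$. Finally, if $e_1,e_2\in\delta_\G(S)$, then necessarily $S=\{\ov\}$ up to complementation, and $\delta_\G(\{\ov\})$ is a bond exactly when $G-e$ is connected. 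In each case connectivity of the two sides in $\G$ is equivalent to connectivity of the corresponding sides in $G$, since subdividing $e$ and attaching or deleting the degree-two node $\ov$ as a leaf preserves connectivity; this correspondence is the routine bookkeeping underlying everything that follows.

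Validity is then checked type by type. For the first two types the value $\a^\mathsf{T}x^{\delta_\G(S)}$ equals $a^\mathsf{T}x^{\delta_G}$ of the associated bond of $G$: coordinates outside $\{e_1,e_2\}$ are untouched, and because $\a_{e_1}=\a_{e_2}=a_e$ the joint contribution of $e_1,e_2$ is $a_e(x_{e_1}+x_{e_2})$, which is $a_e$ precisely when $e$ lies in the associated $G$-bond and $0$ otherwise. Hence these bonds inherit $\a^\mathsf{T}x\le b$ from $a^\mathsf{T}x\le b$. The only genuinely new bond is $\delta_\G(\{\ov\})$, for which $\a^\mathsf{T}x=2a_e$; here the hypothesis $a_e\le\tfrac{b}{2}$ is exactly what guarantees validity, and this is the single place it enters.

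For the facet claim I want $|\E|=|E|+1$ affinely independent bonds of $\G$ that are tight for $\a^\mathsf{T}x\le b$. Fix a tight bond $\delta_G(W_1)$ of $G$ with $e\in\delta_G(W_1)$, which exists by \Cref{lem:SolExistance}, and extend it to an affine basis $\delta_G(W_1),\dots,\delta_G(W_m)$ of the facet of $\Bond(G)$, where $m=|E|$. Lift each $W_i$ to $\G$ by placing $\ov$ on the side for which $x_{e_1}+x_{e_2}$ equals $x^{\delta_G(W_i)}_e$, obtaining tight bonds $\widebar\delta_1,\dots,\widebar\delta_m$. Consider the linear map $p\colon\R^{\E}\to\R^E$ with $p(x)_f=x_f$ for $f\neq e$ and $p(x)_e=x_{e_1}+x_{e_2}$; it sends $\widebar\delta_i\mapsto x^{\delta_G(W_i)}$, so affine independence of the images forces affine independence of the $\widebar\delta_i$. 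To gain the last point I exploit that a cut through $e$ has two liftings: let $\widebar\delta_0$ be the lifting of $W_1$ placing $\ov$ on the opposite side, so that $\widebar\delta_0$ and $\widebar\delta_1$ both map to $x^{\delta_G(W_1)}$ under $p$ but differ on $(e_1,e_2)$, being $(0,1)$ and $(1,0)$. Pushing any affine dependence of $\{\widebar\delta_0,\dots,\widebar\delta_m\}$ through $p$ and invoking independence of the $x^{\delta_G(W_i)}$ collapses it to a multiple of $\widebar\delta_0-\widebar\delta_1\neq\bnull$, which must vanish; thus all $m+1$ points are affinely independent and $\a^\mathsf{T}x\le b$ is facet-defining.

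The crux is this final step: the subdivision introduces exactly one new coordinate, and the single extra dimension is produced precisely by the two ways of routing $\ov$ across a cut separating $v$ and $w$. Everything hinges on the existence of a \emph{tight} such cut, which is the content of \Cref{lem:SolExistance}; its hypotheses rule out only the degenerate situation $\{a^\mathsf{T}x\le b\}=\{-x_e\le 0\}$ (and when $a_e=0$ this case cannot arise, so the same selection argument applies). I expect the main obstacle to be not any one computation but keeping the affine-independence argument honest across the change of ambient dimension; the projection $p$ is the device that makes it clean, reducing independence in $\R^{\E}$ to the assumed independence in $\R^{E}$ plus the trivial fact $\widebar\delta_0\neq\widebar\delta_1$.
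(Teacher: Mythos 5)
Your proof is correct and follows essentially the same route as the paper's: classify bonds of $\G$ by their intersection with $\{e_1,e_2\}$ for validity (the hypothesis $a_e\le \frac{b}{2}$ being needed only for $\delta_{\G}(\{\ov\})$), lift an affine basis of tight bonds of $G$ by routing $\ov$ to one side, and obtain the final affinely independent point from the second routing of $\ov$ across a tight bond separating $v$ and $w$ supplied by \Cref{lem:SolExistance}. Your explicit projection argument for affine independence, and your remark about the degenerate case $\{a^\mathsf{T}x\le b\}=\{-x_e\le 0\}$ and about $a_e=0$ (hypotheses of \Cref{lem:SolExistance} that the paper's proof invokes without checking), are if anything slightly more careful than the original.
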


\begin{proof}
	For each cut $\delta \subseteq \widebar{E}$ in $\G$ we have $|\delta \cap \{e_1,e_2\}|\leq 1$ or $\delta=\delta_{\G}(\widebar{v})$. Hence, it is easy to see that $\widebar{a}^\mathsf{T}x \leq b$ is valid for $\Bond(\widebar{G})$.
	
	Since $a^\mathsf{T}x \leq b$ is facet-defining for $\Bond(G)$ there exist $U_1, \dots, U_m \subseteq V$ ($m=|E|$) such that each $\delta_G (U_i)$ is a bond in $G$ satisfying $a^\mathsf{T}x^{\delta(U_i)}=b$.
	We may assume $v \in U_i$ for all $i\in [m]$. Setting $\widebar{U}_i=U_i \cup \{\widebar{v}\}$ it is easy to see that $\delta_i=\delta_{\G}(\widebar{U}_i)$ is a bond satisfying $\widebar{a}^\mathsf{T} x^{\delta_i} =b$. Note that $e_1 \notin \delta_i$ for each $i \in [m]$.
	By \Cref{lem:SolExistance} there exists $U_0 \subseteq V$ such that $v \in U_0$, $w \notin U_0$, and $a^\mathsf{T}x^{\delta_G(U_0)}=b$.
	We conclude that $e_1 \in \delta_{\G}(U_0)$ and thus, $x^{\delta_\G(U_0)},x^{\delta_1}, \dots, x^{\delta_m}$ are affinely independent. Hence, $\widebar{a}^\mathsf{T}x \leq b$ is facet-defining for $\Bond(\widebar{G})$.
\end{proof}

Iteratively applying this theorem yields the following corollary:
\begin{cor}[Replacing an edge by a path]
	Let $G=(V,E)$ be a connected graph and $a^\mathsf{T}x \leq b$ be facet-defining for $\Bond(G)$. Let $e=vw \in E$ and assume that $a_e \leq \frac{b}{2}$.
	
	Obtain $\widebar{G}=(\widebar{V},\widebar{E})$ by subdividing $e$ into edges $e_1,\dots , e_k$ for arbitrary $k\in \mathbb N$, $k \geq 2$. For $f \in \widebar{E}$ set
	$$\widebar{a}_f=
	\begin{cases}
	a_f, &\text{if } f \in E,\\
	a_e, &\text{if } f \in\{e_1,\dots,e_k\}.
	\end{cases} $$
	Then $\widebar{a}^\mathsf{T} x \leq b$ is facet-defining for $\Bond(\widebar{G})$.
\end{cor}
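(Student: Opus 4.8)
The plan is to prove the statement by induction on the number of subdivision edges, using the preceding theorem on subdividing an edge as the engine of the induction. The base case $k=2$ is precisely that theorem, so the real content is the inductive step that passes from a subdivision path of length $j$ to one of length $j+1$.

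For the inductive step I would set up the following hypothesis: after subdividing $e$ into a path $e_1,\dots,e_j$, the resulting graph $G_j$ carries a facet-defining inequality $(a^{(j)})^\mathsf{T}x \leq b$ with $a^{(j)}_{e_i}=a_e$ for every $i\in[j]$ and $a^{(j)}_f = a_f$ for all original edges $f \neq e$. I then apply the subdivision theorem to a single path edge, say $e_j$, of $G_j$. This replaces $e_j$ by two edges, each of which inherits the coefficient of $e_j$, namely $a_e$, while keeping the right-hand side equal to $b$. After relabelling, $G_{j+1}$ is exactly $G$ with $e$ subdivided into a path $e_1,\dots,e_{j+1}$, all path edges carry coefficient $a_e$, and the new inequality is facet-defining; this re-establishes the hypothesis with $j+1$ in place of $j$, and iterating $k-1$ times yields the claim for arbitrary $k \geq 2$.

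The only thing that must be verified before invoking the theorem at each step is its hypothesis $a^{(j)}_{e_j} \leq \frac{b}{2}$, and this is exactly where the uniform coefficient does its work: since every subdivided edge retains the coefficient $a_e$ and the right-hand side $b$ never changes, the required condition $a^{(j)}_{e_j}=a_e \leq \frac{b}{2}$ is literally the original assumption $a_e \leq \frac{b}{2}$, hence holds automatically at every stage. Consequently there is no accumulating obstruction and no additional case analysis is needed. I therefore expect the only point demanding care to be this bookkeeping—confirming that both the coefficient $a_e$ and the right-hand side $b$ are genuinely invariant under each application, so that the hypothesis of the subdivision theorem is reproduced verbatim at every step—rather than any substantively new argument.
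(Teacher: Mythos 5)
Your proposal is correct and matches the paper exactly: the paper derives this corollary by "iteratively applying" the edge-subdivision theorem, and your induction—together with the observation that each subdivided edge keeps coefficient $a_e$ and the right-hand side stays $b$, so the hypothesis $a_e \leq \frac{b}{2}$ is reproduced at every step—is precisely the formalization of that iteration.
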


Next, we consider the inverse, i.e., the replacement of an induced path by an edge. To this end, we start by investigating coefficients of facet-defining inequalities on edges contained in induced paths.

\begin{lem}\label{lem:path_coeffs}
	Let $G=(V,E)$ be a connected graph, $P \subseteq G$ be an induced path, $a^\mathsf{T}x \leq b $ be facet-defining for $\Bond(G)$, and $\calE$ be the set of all bonds $\delta$ in $G$ satisfying the equality $a^\mathsf{T}x^\delta = b$.
	Assume that $E(P) \cap \supp(a) \neq \emptyset$ and $\{a^\mathsf{T}x \leq b \} \neq \{-x_e \leq 0 \}$ for each $e \in E(P)$.
	Then:
	\begin{enumerate}[(i)]
		\item If $|\delta \cap E(P)| \leq 1$ for all $\delta\in \calE$, we have  $a_e=a_f$ for all $e,f \in E(P)$.
		\item If there exists some $\delta^* \in \calE$ with $|\delta^* \cap E(P)|=2$,
		 we have either $a_e=a_f$ for all $e,f \in E(P)$ or there is a unique $\e \in E(P)$ such that $a_\e>a_e$ and $a_e=a_f$ for all $e,f \in E(P)\setminus \{\e\}$.
	\end{enumerate}
	In particular, we always have $a_e\neq 0$ for some $e \in E(G)\setminus E(P)$ or $\max_{e \in E(P)}a_e >0$.
\end{lem}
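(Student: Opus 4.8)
The plan is to prove the statement by comparing values of pairs of tight bonds in $\calE$ that differ only in how they meet $P$, and to reduce everything to \emph{neighbouring} edges of $P$; this already captures the full difficulty, since the claimed relations only involve $a_e,a_f$ with $e,f\in E(P)$. Write $P=p_0p_1\cdots p_k$ with $e_i=p_{i-1}p_i$. Since $P$ is induced, the only edges of $P$ incident to an internal vertex $p_i$ are $e_i$ and $e_{i+1}$; every other edge at $p_i$ leaves $P$. For a bond $\delta(S)$ the set $\delta(S)\cap E(P)$ is exactly the set of edges where the walk $p_0,\dots,p_k$ changes sides, so $|\delta(S)\cap E(P)|$ counts these sign changes, and the hypotheses of (i) and (ii) say that among the tight bonds such changes happen at most once, resp.\ that some tight bond changes sides exactly twice.

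The first step is a rigidity argument showing that in case (i) every path edge is used by some tight bond. Suppose no $\delta\in\calE$ crosses a fixed $e_i\in E(P)$. Then lowering the single coefficient $a_{e_i}$ by a small $\varepsilon>0$ keeps the inequality valid (its left-hand side can only decrease on bonds containing $e_i$ and is unchanged otherwise) and keeps it tight on all of $\calE$. As $\calE$ affinely spans the facet $\{a^\mathsf{T}x=b\}$, the perturbed inequality must define the same facet, hence be a scalar multiple of $a^\mathsf{T}x\leq b$; comparing any coordinate outside $e_i$ on which $a$ does not vanish forces the multiple to be $1$, a contradiction. The only escape is $\supp(a)=\{e_i\}$, i.e.\ an edge inequality; one checks this is incompatible with the case (i) hypothesis together with $E(P)\cap\supp(a)\neq\emptyset$ and $\{a^\mathsf{T}x\leq b\}\neq\{-x_e\leq 0\}$ (for $|E(P)|\geq 2$ the conclusion would even force $a_{e_i}=0$). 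Thus in case (i) each $e_i$ is crossed by some tight bond; the existence of suitable crossing bonds through a prescribed edge is also where \Cref{lem:SolExistance} enters.

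The heart of the proof is then to compare $a_{e_i}$ with $a_{e_{i+1}}$ for neighbouring edges. Starting from a tight bond $\delta(S)$ whose unique crossing is $e_i$ (so $p_0,\dots,p_{i-1}$ lie on one shore and $p_i,\dots,p_k$ on the other), I would flip $p_i$ to the other side so as to move the crossing to $e_{i+1}$. To stay inside $\Bond(G)$ I repair connectivity using the components of the two shores: the tail $p_{i+1},\dots,p_k$ stays in a single component because the path edges $e_{i+2},\dots,e_k$ do not touch $p_i$, so one may move $p_i$ together with exactly those components of the relevant shore minus $p_i$ that do not contain the tail, producing a genuine bond $\delta(S')$ whose only crossing is $e_{i+1}$. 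Validity of $\delta(S')$ together with tightness of $\delta(S)$ gives one inequality between $a_{e_i}$ and $a_{e_{i+1}}$, and running the symmetric move from a tight bond crossing only $e_{i+1}$ (available by the first step) gives the reverse inequality; the two combine to $a_{e_i}=a_{e_{i+1}}$, and propagating along $P$ yields (i). For (ii) the doubly-crossing tight bond $\delta^\ast$ supplies one extra admissible configuration at a single vertex of $P$; repeating the comparison then shows all coefficients coincide except that the move across the doubly-covered vertex is valid in only one direction, forcing a unique $\e\in E(P)$ with $a_{\e}>a_e$ and equality elsewhere.

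The main obstacle is exactly the bookkeeping of the edges leaving $P$ at internal vertices: flipping $p_i$ also toggles all off-path edges at $p_i$ (and at the components moved along with it), so the naive symmetric difference is not contained in $E(P)$ and the off-path contributions do not obviously cancel. Controlling this—using that $P$ is chordless, that the tail lies in a single component of each shore, and that running the move in both directions between two \emph{tight} bonds cancels the off-path terms—is the delicate point, and the place where cases (i) and (ii) genuinely diverge. Finally, the closing assertion follows from (i) and (ii): if every off-path coefficient vanished and $\max_{e\in E(P)}a_e\leq 0$, then $\supp(a)\subseteq E(P)$ with all coefficients $\leq 0$ and, by (i)/(ii), equal to a common $c<0$ up to at most one exception. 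Evaluating $a^\mathsf{T}x\leq b$ at $\bnull\in\Bond(G)$ and at a tight bond forces $b=0$, so the inequality becomes a nonnegative combination of the homogeneous edge inequalities $-x_e\leq 0$ supported on at least two edges; such a combination is not facet-defining for $\CutP(G)$, hence by \Cref{thm:Vertices} not for $\Bond(G)$, contradicting that $a^\mathsf{T}x\leq b$ is a facet different from every $\{-x_e\leq 0\}$.
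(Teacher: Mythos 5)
Your overall architecture differs from the paper's, and the step you yourself flag as ``the delicate point'' is a genuine gap, not a technicality. When you flip the internal vertex $p_i$ across the shore of a tight bond $\delta(S)$ whose only crossing of $P$ is $e_i$, the symmetric difference of the two cuts is not $\{e_i,e_{i+1}\}$: it also contains every off-path edge joining $p_i$ (and the repair components you move with it) to the rest of the graph. Validity of the new bond therefore only yields $a_{e_i}-a_{e_{i+1}}+\Delta_S\ge 0$ for an uncontrolled term $\Delta_S$ built from off-path coefficients. Your proposed rescue---running the reverse flip from a tight bond $\delta(T)$ crossing only $e_{i+1}$---produces a second uncontrolled term $\Delta_T$ computed with respect to a completely different shore partition; adding the two inequalities gives $\Delta_S+\Delta_T\ge 0$ and no information about $a_{e_i}-a_{e_{i+1}}$. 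Chordlessness of $P$ only guarantees that no \emph{path} edges other than $e_i,e_{i+1}$ are toggled; it says nothing about edges from $p_i$ to $V(G)\setminus V(P)$, which exist whenever $p_i$ has degree greater than two, so the argument only closes under a degree assumption the lemma does not make. Case (ii) is moreover not argued at all beyond the assertion that ``the move \dots is valid in only one direction.''

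The paper avoids pairwise comparison of bonds entirely: it normalizes $a$ to be integral, raises all path coefficients to a common level ($N=\max_{e\in E(P)\setminus\{\e\}}a_e$ for statement (ii), and $N+1$ for statement (i), where integrality converts the strict inequality $a^\mathsf{T}x^\delta<b$ on doubly-crossing bonds into the unit of slack needed for the bump), verifies that the raised inequality is still valid, and then uses that a facet-defining inequality cannot be obtained from a different valid inequality by adding homogeneous edge inequalities. Your opening rigidity step (every path edge lies in a tight bond unless the facet is $\{-x_e\le 0\}$) is sound and is essentially \Cref{lem:SolExistance}, and your derivation of the ``in particular'' clause from (i)/(ii) is fine; but without a correct replacement for the flip argument the two main statements remain unproved.
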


\begin{proof}
	By \cite[Theorem 5]{lectures_0-1-polytopes} we may assume $a \in \Z^E$ and since $\bnull \in \Bond(G)$ we have $b \in \Z_{\geq 0}$.
	Let $M= \max_{e \in E(P)}\{a_e\}$ and $\e \in E(P)$ with $a_\e=M$. Set $N=\max_{e \in E(P)\setminus \{\e\}}\{a_e\}$ and define $c \in \R ^E$ by
	$$c_e=\begin{cases}
		a_e,	&\text{if $e \notin E(P)$},\\
		M,		&\text{if $e =\e$},\\
		N,		&\text{else}.
	\end{cases}$$
	Note that we might have $M=N$.
	Let $\delta$ be an arbitrary bond in $G$.
	If $\delta \cap E(P) = \emptyset$, we have $a^\mathsf{T}x^\delta=c^\mathsf{T}x^\delta\leq b$.
	If $|\delta \cap E(P)|=\{f\}$ for some $f \in E(P)$, we consider the bond $\delta'=(\delta \setminus \{f\})\cup \{\e\}$ and obtain $c^\mathsf{T}x^\delta = a^\mathsf{T}x^{\delta'} \leq b$.
	If $|\delta \cap E(P)|=2$, we consider the bond $\delta'= (\delta \setminus (\delta \cap E(P))) \cup \{\e, f^*\}$ for some $f^* \in E(P)$ with $a_{f^*}=N$ and obtain $c^\mathsf{T}x^\delta =a^\mathsf{T}x^{\delta'} \leq b$.
	Hence, $c^\mathsf{T}x \leq b$ is valid for $\Bond(G)$.
	Since $a^\mathsf{T} x \leq b $ can be obtained from $c^\mathsf{T}x \leq b$ by adding homogeneous edge inequalities, we have either $c=a$ or $c=\bnull$. The latter case implies that $a^\mathsf{T}x\leq b$ is a homogeneous edge inequality contradicting the assumption.
	 
	Thus, in the following we can assume that $a=c$. Then, statement \emph{(ii)} follows from the previous discussion.
	To prove statement \emph{(i)} assume, that $| \delta \cap E(P)|\leq 1$ for all $\delta \in \calE$ and assume that $m<M$. Defining $d \in \R^E$ by
	$$ d_e= \begin{cases}
	a_e,	&\text{if $e \notin E(P)$},\\
	M,		&\text{if $e =\e$},\\
	m+1,	&\text{else},\end{cases}$$
	we show that $a^\mathsf{T}x \leq b$ cannot be facet-defining for $\Bond(G)$.
	Similar as above, we have $d^\mathsf{T}x^\delta \leq b$ for all bonds $\delta$ with $|\delta \cap E(P)| \leq 1$. 
	
	Since $P$ is an induced path, a bond in $G$ contains at most $2$ edges from $P$.
	Hence, it remains to prove validity for all bonds picking $2$ edges from $P$. To this end, let $\delta$ be such a bond. By assumption we have $a^\mathsf{T}x^\delta <b$ and thus, since both values are integers it follows that $b-1 \geq a^\mathsf{T}x^\delta =M+N$. Hence, $d^\mathsf{T}x^\delta =M+N+1 \leq b$.
	Since $d=\bnull$ would imply that $a^\mathsf{T}x \leq b$ is a homogeneous edge inequality, $d^\mathsf{T}x \leq b$ defines a proper face of $\Bond(G)$.
	Thus, $d^\mathsf{T}x \leq b$ dominates $a^\mathsf{T}x \leq b$ contradicting the assumption that the latter inequality is facet-defining. 
	
	The \qq{in particular}-part holds since otherwise $a^\mathsf{T}x\leq b$ is dominated by homogeneous edge inequalities.
\end{proof}

\begin{ex}
	Indeed, all cases of the previous lemma can occur. 
	We may see this considering facet-defining inequalities of $\Bond(C_n)$, which are formally discussed later in \Cref{thm:FacetDescriptionCn}.
	Let $e \in E(C_n)$, and consider the facet-defining inequalities
	\begin{align}
		\sum_{e  \in E(C_n)}x_e &\leq 2 \label{eq:noninterleaved_cycle}\quad\text{ and}\\
		x_e - \sum_{\substack{f \in E(C_n)\\f \neq e}} x_f &\leq 0 \label{eq:homogeneous_cycle}.
	\end{align}
	An example for statement \emph{(i)} is given by inequality~(\ref{eq:homogeneous_cycle}) and an arbitrary induced path $P$ not containing $e$.
	For the first case of statement \emph{(ii)} consider inequality~(\ref{eq:noninterleaved_cycle}) and an arbitrary induced path $P\subseteq C_n$; for the second case consider inequality~(\ref{eq:homogeneous_cycle}) and an induced path $P \subseteq C_n$ with $e\in E(P)$.
\end{ex}

Given the previous lemma, we are now prepared to investigate, how replacing a path by an edge effects facet-defining inequalities of bond polytopes.

\begin{thm}[Replacing a path by an edge]
	Let $G=(V,E)$ be a connected graph, $a^\mathsf{T}x \leq b$ be facet-defining for $\Bond(G)$, and $P$ be an induced path in $G$.
	Denote by $\calE$ the set of all bonds $\delta$ in $G$ satisfying the equality $a^\mathsf{T}x^\delta = b$.
	Assume that $\{a^\mathsf{T}x \leq b \} \neq \{-x_e \leq 0 \}$ for each $e \in E(P)$ and either there exist $e,f \in E(P)$ with $a_e \neq a_f$ or there exists no bond $\delta\in \calE$ with $|\delta \cap E(P)|=2$.
	
	Let $\widebar{G}=(\widebar{V},\widebar{E})$ be obtained from $G$ by replacing $P$ by a single edge $p$. Set $M=\max_{e \in E(P)}\{a_e\}$ and define $\a \in \R^{\widebar{E}}$ by
	$$\a_e=
	\begin{cases}
	a_e, &\text{if $e \in E$},\\
	M, &\text{if e=p.}
	\end{cases} $$
	Then $\widebar{a}^\mathsf{T}x \leq b$ defines a facet of $\Bond(G)$.
\end{thm}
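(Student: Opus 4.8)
Throughout I read the operation as contracting a \emph{subdivided edge}, i.e.\ the internal vertices of $P$ have degree $2$ in $G$ (otherwise ``replacing $P$ by a single edge $p$'' is not well defined, and the statement is not the inverse of the preceding subdivision results). Write $P=v_0v_1\cdots v_k$ with endpoints $v_0,v_k$, so $|E(P)|=k$. The bonds of $\G$ correspond to the bonds of $G$ meeting $E(P)$ in at most one edge: if $\delta\subseteq\E$ is a bond of $\G$ with $p\notin\delta$, reinserting the whole uncut path on the side of $v_0,v_k$ yields a bond $\phi(\delta)$ of $G$ disjoint from $E(P)$; if $p\in\delta$, I reinsert $P$ cut at the single edge $\e$, which is again a bond because the two resulting subpaths are pendant at $v_0$ and $v_k$. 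Since $\widebar a_p=M=a_\e$, this map is value preserving, $\widebar a^\mathsf{T}x^\delta=a^\mathsf{T}x^{\phi(\delta)}\le b$, which already establishes validity of $\widebar a^\mathsf{T}x\le b$; by the ``in particular''-part of \Cref{lem:path_coeffs} we moreover have $\widebar a\neq\bnull$, so a proper face is defined.

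For the facet property I use \Cref{thm:Vertices}\,(ii): $\dim\Bond(\G)=|\E|=|E|-k+1$, so I must exhibit $|E|-k+1$ affinely independent bonds of $\G$ on the face $\widebar F:=\Bond(\G)\cap\{\widebar a^\mathsf{T}x=b\}$. Let $\calE_\e\subseteq\calE$ be the bonds with $a^\mathsf{T}x=b$ meeting $E(P)$ only in $\e$ (or not at all). The correspondence above identifies $\widebar F$ exactly with $\{x^{\psi(\delta)}:\delta\in\calE_\e\}$, where $\psi$ turns a cut at $\e$ into a cut at $p$; on these vectors the linear map $\Pi\colon\R^E\to\R^\E$ with $\Pi(x)_e=x_e$ for $e\neq p$ and $\Pi(x)_p=\sum_{e\in E(P)}x_e$ satisfies $\Pi(x^\delta)=x^{\psi(\delta)}$. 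The task thus reduces to proving $\dim\mathrm{aff}(\calE_\e)=|E|-k$.

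I split along the hypothesis, which via \Cref{lem:path_coeffs} is exactly the distinction between its two cases. If no bond of $\calE$ meets $E(P)$ twice (the second alternative), then \Cref{lem:path_coeffs}\,(i) gives $a_e=M$ for all $e\in E(P)$, so $\calE_\e=\calE$ and each $\Pi(x^\delta)$ with $\delta\in\calE$ lies in $\{\widebar a^\mathsf{T}x=b\}$. Since $F$ is a facet, $\mathrm{aff}(\calE)=\{a^\mathsf{T}x=b\}$ has dimension $|E|-1$, and $\Pi$—whose kernel has dimension $k-1$ and lies in the direction space of this hyperplane—maps it onto a subspace of $\{\widebar a^\mathsf{T}x=b\}$ of dimension $|E|-k$, finishing this case. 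If instead some bond of $\calE$ meets $E(P)$ twice, the second alternative fails, so the first gives $e,f\in E(P)$ with $a_e\neq a_f$, whence \Cref{lem:path_coeffs}\,(ii) yields a unique $\e$ with $a_\e=M$ strictly larger than every other path coefficient. Two structural facts then hold: (a) every once-cutting bond of $\calE$ cuts precisely at $\e$, since sliding a cut from a cheaper edge $e_i$ to $\e$ (legitimate as the internal vertices have degree $2$) would give a bond of value $b-a_{e_i}+M>b$, contradicting validity over $G$; and (b) every twice-cutting bond of $\calE$ equals $\{e_i,e_j\}$ itself, because the degree-$2$ middle segment $v_i,\dots,v_{j-1}$ is forced to form an isolated side whose only boundary edges are $e_i$ and $e_j$.

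The remaining and main difficulty is that these twice-cutting bonds have no counterpart in $\G$, so I must show that discarding them does not lower the attainable dimension, i.e.\ that $\calE_\e$ alone affinely spans $\{\widebar a^\mathsf{T}x=b\}$. I would establish this by a domination argument in the spirit of \Cref{lem:SolExistance}: assuming $\widebar F$ is not a facet, choose a valid inequality $\widebar c^\mathsf{T}x\le\widebar d$ for $\Bond(\G)$ that is tight on all of $\widebar F$ but is not a multiple of $\widebar a^\mathsf{T}x\le b$, and lift it to $\R^E$ by copying $\widebar c$ on $E\setminus E(P)$, setting $c_\e=\widebar c_p$, and choosing the remaining path coefficients so that the lifted inequality stays valid on \emph{every} bond of $G$ while becoming tight on each $\{e_i,e_j\}\in\calE$. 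By construction the lift is then tight on all of $\calE$, contradicting that $F$ is a facet of the full-dimensional polytope $\Bond(G)$. The delicate step I expect to cost the most work is precisely the coefficient bookkeeping ensuring validity of the lift on all bonds of $G$—not merely those in $\calE$—which is exactly the edge-by-edge comparison already performed in the proof of \Cref{lem:path_coeffs}.
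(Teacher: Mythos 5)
Your setup---reading $P$ as a subdivided edge, the validity argument via reinserting the path (cut at $\e$ or uncut), the projection $\Pi$, and the case split driven by \Cref{lem:path_coeffs}---matches the paper's proof, and your Case A (no twice-cutting bond in $\calE$) is complete and correct: since all path coefficients coincide, $\ker\Pi$ lies in the direction space of $\{a^\mathsf{T}x=b\}$ and the dimension drops by exactly $|E(P)|-1$. The problem is Case B. There you correctly establish the two structural facts (every once-cutting bond of $\calE$ cuts at $\e$; every twice-cutting bond of $\calE$ is literally a pair $\{\e,f\}$ with $f\in E(P)\setminus\{\e\}$), but you then declare the key step---that discarding the twice-cutting bonds does not lower the attainable dimension---to be "the remaining and main difficulty" and only sketch a domination/lifting argument without carrying it out. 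That is a genuine gap, and the deferred bookkeeping is not innocuous: tightness of the lift on the pairs $\{\e,e_i\}$ forces $c_{e_i}=d-c_{\e}$ for every non-maximal path edge, and validity of the lift on a bond of $G$ cutting $P$ once at $e_i\neq\e$ then requires $c_{e_i}\le c_{\e}$, i.e.\ $c_{\e}\ge d/2$, which you have not established for the lifted facet of $\Bond(\G)$. So the argument as sketched could fail exactly at the point you postpone.

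The frustrating part is that you already have everything needed to close Case B in one line, and it is what the paper does. You showed $\calE\setminus\calE_{\e}$ consists of \emph{exactly} $|E(P)|-1$ bonds, namely the pairs $\{\e,f\}$ (all of which lie in $\calE$ because $a_{\e}+a_f=M+N=b$ once one such pair does). Since adjoining a single point to a set raises its affine dimension by at most one, deleting $|E(P)|-1$ points from $\calE$, whose affine hull is the $(|E|-1)$-dimensional hyperplane $\{a^\mathsf{T}x=b\}$, leaves $\dim\mathrm{aff}(\calE_{\e})\ge |E|-1-(|E(P)|-1)=|\E|-1$. As $\Pi$ is an affine isomorphism on $\mathrm{aff}(\calE_{\e})$ (all these vectors vanish on $E(P)\setminus\{\e\}$) and the face is proper by your validity argument and $\a\neq\bnull$, this already forces $\Bond(\G)\cap\{\a^\mathsf{T}x=b\}$ to be a facet. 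Replacing your final paragraph by this counting argument yields a complete proof along the paper's lines.
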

\begin{proof}
	Let $\pi\colon \R^E \to \R^\E$ be the projection given by
	$$\pi(x)_e=\begin{cases}
	x_e, 					&\text{if }e \neq p,\\
	\sum_{e \in E(P)}x_e, 	&\text{if }e=p.
	\end{cases} $$
	If there is no $\delta^* \in \calE$ with $|\delta^*\cap E(P)|=2$, it is straight forward to see that for each bond~$\delta \in \calE$ we have $\pi(x^\delta) \in \Bond(\G)$ and $\a^\mathsf{T}\pi(x^\delta)=b$. Thus, 
	\begin{align*}
		\dim \left(\{\a^\mathsf{T}x =b\}\cap \Bond(\G)\} \right)&\geq \dim \left(\{a^\mathsf{T}x =b\}\cap \Bond(G) \right) -\left( |E(P)|-1 \right)\\
		&=|E|-1-\left( |E(P)|-1 \right)=|\E|-1.
	\end{align*}
	Since $a^\mathsf{T}x \leq b$ was facet-defining, we have $\a\neq \bnull$. Moreover, since $\Bond(\G)$ has dimension $|\E|$, we have $\{\a^\mathsf{T}x =b\}\cap \Bond(\G)\neq \Bond(\G)$ yielding that $\a^\mathsf{T}x \leq b$ is facet-defining.
	
	Now, assume there exists a bond $\delta^*\in \calE$ with $|\delta^* \cap E(P)|=2$ and edges $e,f \in E(P)$ with $a_e \neq a_f$. By \Cref{lem:path_coeffs}, there exists a unique edge  $\e \in E(P)$ with $a_\e > a_f$ for all $f \in E(P)\setminus\{e^*\}$ and we have $a_e=a_f$  for all $e, f \in E(P)\setminus\{e^*\}$.
	Note that for each bond $\delta \in \calE$ with $\delta \cap E(P) \neq \emptyset$ we have $\e \in \delta$. Thus, there are exactly 
	$|E(P)|-1$ such bonds with $|\delta \cap E(P)|=2$. Since $\delta \cap E(P)=\{\e\}$ for all bonds with $|\delta \cap E(P)| = 1$, applying $\pi$ on all bonds $\delta$ with $a^\mathsf{T}x^\delta=b$ and $|\delta \cap E(P)| \leq 1$ yields a $\left(|\E|-1\right)$-dimensional face of $\Bond(\G)$. Thus, $\a^\mathsf{T}x \leq b$ is facet-defining for $\Bond(\G)$.
\end{proof}

We close this section by discussing a bond polytope version of \cite[Lemma~2.5]{OnTheCutPolytope}:
\begin{lem}\label{lem:BarahonaZeroKoeff}
	Let $G=(V,E)$ be a connected graph and $a^\mathsf{T}x \leq b$ be a valid inequality for $\Bond(G)$. Moreover, let $pq \in E$ and let $S\subsetneq V \setminus \{p,q\}$.
	
	If $\delta(S)$, $\delta(S \cup \{p\})$, $\delta(S \cup \{q\})$, and $\delta(S \cup \{p,q\})$ are bonds satisfying  $a^\mathsf{T}x = b$, then $a_{pq}=0$.
\end{lem}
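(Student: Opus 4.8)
The plan is to mimic the classical Barahona--Mahjoub argument underlying \cite[Lemma~2.5]{OnTheCutPolytope}, exploiting that the hypothesis already hands us all four cuts as bonds lying on the face $\{a^\mathsf{T}x=b\}$. The heart of the matter is a single coordinate-wise identity among the four incidence vectors, after which the conclusion drops out by linearity.

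First I would record how each edge behaves in the four cuts. Write $W_0=S$, $W_p=S\cup\{p\}$, $W_q=S\cup\{q\}$, and $W_{pq}=S\cup\{p,q\}$; all four sets have the same intersection $S$ with $V\setminus\{p,q\}$. For an edge $e$ with neither endpoint in $\{p,q\}$, membership in $\delta(W)$ depends only on $W\cap(V\setminus\{p,q\})$, so $x^{\delta(W_0)}_e=x^{\delta(W_p)}_e=x^{\delta(W_q)}_e=x^{\delta(W_{pq})}_e$. For an edge $pu$ with $u\neq q$ (and symmetrically for $qu$ with $u\neq p$), a direct case distinction on whether $u\in S$ shows $x^{\delta(W_0)}_{pu}+x^{\delta(W_{pq})}_{pu}=x^{\delta(W_p)}_{pu}+x^{\delta(W_q)}_{pu}=1$. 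Finally, for the edge $pq$ itself, $p$ and $q$ lie on the same side in $W_0$ and $W_{pq}$ but on opposite sides in $W_p$ and $W_q$, so $x^{\delta(W_0)}_{pq}+x^{\delta(W_{pq})}_{pq}=0$ while $x^{\delta(W_p)}_{pq}+x^{\delta(W_q)}_{pq}=2$.

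Collecting these comparisons yields the vector identity
\[
x^{\delta(W_p)}+x^{\delta(W_q)}=x^{\delta(W_0)}+x^{\delta(W_{pq})}+2\,\mathbf{e},
\]
where $\mathbf{e}$ denotes the unit vector supported on the coordinate $pq$. Applying $a^\mathsf{T}$ to both sides and using that all four bonds satisfy $a^\mathsf{T}x=b$, the left-hand side evaluates to $2b$ while the right-hand side evaluates to $2b+2a_{pq}$; hence $a_{pq}=0$.

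I expect essentially no obstacle here: the only creative step is spotting the right linear combination of the four incidence vectors, and this is precisely the combination dictated by the lattice $\{W_0,W_p,W_q,W_{pq}\}$ on the pair $\{p,q\}$. The sole feature that distinguishes the bond setting from the cut setting---namely that not every vertex subset induces a bond---is already neutralized by the hypothesis, which supplies all four required cuts as bonds; in fact validity of $a^\mathsf{T}x\le b$ is not needed beyond the four tightness equations.
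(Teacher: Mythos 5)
Your proof is correct and is precisely the Barahona--Mahjoub argument that the paper invokes: the paper's own proof of this lemma is a one-line reference to \cite[Lemma~2.5]{OnTheCutPolytope} with ``cut'' replaced by ``bond'', and your coordinate-wise identity $x^{\delta(W_p)}+x^{\delta(W_q)}=x^{\delta(W_0)}+x^{\delta(W_{pq})}+2\,\mathbf{e}$ is exactly that argument written out in full. Your closing observation that validity is not actually needed beyond the four tightness equations is also accurate.
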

\begin{proof}
	One can re-use the proof of \cite[Lemma 2.5]{OnTheCutPolytope} by simply replacing \qq{cut} with \qq{bond}.
\end{proof}
The cut version \cite[Lemma 2.5]{OnTheCutPolytope} of the above lemma turns out to be a powerful tool when applied to cut versions of the other above lemmata and theorems in this section:
It allows the addition of further edges to the graph while retaining the facet-defining properties of the inequalities under consideration.
In the context of bond polytopes, however, there often may not exist a set $S$ yielding the required bonds (in contrast to cuts).
Nonetheless, although not as versatile as its cut version, the lemma will still be crucial in later proofs.

\section{Reduction to $\boldsymbol{3}$-connectivity}\label{sec:Reduction}

We show that $\MB$ can be reduced in linear time to $3$-connected graphs.
While a (similar) reduction was already proposed in \cite{bond_K5-e_free}, it contained a gap, leading to an exponential running time. 
Both our reduction and the one in \cite{bond_K5-e_free} is based on the following observation. For completeness we give a full proof.

\begin{obs}[Bonds over clique sums]\label{lem:BondsOverCliqueSums}
	Let $G=G_1 \oplus_k G_2$ with $k\in [2]$ and $\delta_G \subseteq E(G)$ be a bond. Let $\delta_i=\delta_G \cap E(G_i)$ for $i \in [2]$.
	\begin{enumerate}[(i)]
		\item If $k=1$, then $\delta_i=\delta_G$ and $\delta_{3-i}=\emptyset$ for either $i=1$ or $2$.
		\item If $k=2$, let $e$ be the unique edge in $E(G_1)\cap E(G_2)$. Either $e \in \delta(G)$ and each $\delta_i$ is a bond in $G_i$ or $e \notin \delta_G$ and $\delta_G=\delta_i$ for either $i=1$ or $2$.
	\end{enumerate}
\end{obs}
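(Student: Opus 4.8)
The plan is to analyze what it means for $\delta_G$ to be a bond, i.e., that both $G[S]$ and $G-S$ are connected for the defining set $S$, and to translate this through the structure of the clique-sum. The key fact I would exploit is that in a $k$-sum with $k \in \{1,2\}$, every path between a vertex of $G_1 \setminus G_2$ and a vertex of $G_2 \setminus G_1$ must pass through the shared clique $K_k$. This is a cut-vertex argument when $k=1$ and a separating-pair/edge argument when $k=2$.

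\medskip

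\noindent\textbf{Case $k=1$.} Let $w$ be the unique shared vertex, so $w$ is a cut-node separating $V(G_1)\setminus\{w\}$ from $V(G_2)\setminus\{w\}$. First I would observe that $w$ lies in exactly one side of the bond, say $w \in S$ (the other case is symmetric). The connected side $G-S$ then contains no vertex of one of the two parts adjacent across $w$; more precisely, since every edge of $G$ lies entirely in $E(G_1)$ or entirely in $E(G_2)$, the cut decomposes as $\delta_G = \delta_1 \dotcup \delta_2$. The hard part here is ruling out that both $\delta_1$ and $\delta_2$ are nonempty: I would argue that if, say, $\delta_2 \neq \emptyset$, then $G_2$ has vertices on both sides of the partition, and because $w$ is the only connection between $G_2\setminus\{w\}$ and the rest, the side not containing $w$ restricted to $G_2$ would be a connected component disconnected from the corresponding vertices living in $G_1$, contradicting connectivity of $G[S]$ or $G-S$. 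Concluding, exactly one of $\delta_1,\delta_2$ equals the whole of $\delta_G$ and the other is empty.

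\medskip

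\noindent\textbf{Case $k=2$.} Let $e=pq$ be the shared edge, and let $S$ define the bond. I would split on whether $p,q$ lie on the same side of the partition. If $p$ and $q$ are on opposite sides, then $e \in \delta_G$; I then need to show each $\delta_i=\delta_{G_i}(S\cap V(G_i))$ is a bond in $G_i$, i.e.\ that $G_i[S\cap V(G_i)]$ and $G_i - (S\cap V(G_i))$ are connected. This follows because $\{p,q\}$ separates $G_1$ from $G_2$, so any connecting path within $G[S]$ (resp.\ $G-S$) between a $G_1$-vertex and a $G_2$-vertex routes through $p$ or $q$; restricting such paths to one side yields connectivity of the corresponding induced subgraph of $G_i$, using that $p$ (resp.\ $q$) is available on its side. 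Conversely, if $p$ and $q$ lie on the same side, then $e \notin \delta_G$, and I would argue exactly as in the $k=1$ case (now with the pair $\{p,q\}$ playing the role of the cut-node) that all cut edges must come from a single $G_i$, giving $\delta_G=\delta_i$ for one $i$ and $\delta_{3-i}=\emptyset$.

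\medskip

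\noindent I expect the main obstacle to be the connectivity bookkeeping in the $k=2$ subcase where $e\in\delta_G$: one must verify carefully that the induced subgraphs of $G_i$ inherit connectivity from $G$, for which the precise argument is that the shared vertices $p,q$ act as the unique gateways between the two halves, so collapsing the far half onto its gateway vertex preserves connectivity on the near half. Everything else is a routine decomposition of the edge set along the clique-sum.
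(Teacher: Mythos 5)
Your proposal is correct and follows essentially the same route as the paper: decompose the cut along the clique-sum, and use that the shared vertex (for $k=1$) respectively the shared pair $\{p,q\}$ (for $k=2$) separates $G_1$ from $G_2$, so that connectivity of $G[S]$ and $G-S$ forces the stated dichotomy. You spell out the "each $\delta_i$ is a bond in $G_i$" step (paths between two $G_i$-vertices of $S$ cannot cross the separator twice when only one of $p,q$ lies in $S$) in more detail than the paper, which simply calls it clear; otherwise the arguments coincide.
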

\begin{proof}
	We prove statement (ii). It is then straight forward to verify statement (i).
	Let $V(G_1) \cap V(G_2)=\{v_1,v_2\}$, $e=v_1v_2$, and $S \subseteq V(G)$ such that $\delta_{G}=\delta_G(S)$.
	Setting $S_i=S \cap V(G_i)$ and $\delta_i=\delta_{G_i}(S_i)$ for $i=1,2$ we have  $\delta_{G}= \delta_1 \cup \delta_2$. Clearly, $\delta_1$ (resp. $\delta_2$) is a bond in $G_1$ (resp. $G_2$) and we have $v_1v_2 \in \delta_G$ if and only if $v_1v_2 \in \delta_1$ and $v_1v_2 \in \delta_2$.
	It only remains to show that $v_1v_2 \notin \delta_G$ implies $\delta_1=\emptyset$ or $\delta_2=\emptyset$.	
	If $v_1v_2 \notin \delta$, we may assume $v_1,v_2 \in S$.
	Since $G-v_1v_2=(G_1-v_1v_2 ) \dotcup (G_2-v_1v_2)$ there cannot be $w_1,w_2 \in V(G)\setminus S$ with $w_i \in V(G_i)$ because $G-S$ would be disconnected. Thus, $V(G_i) \subseteq S$ and $\delta_{G}$ is a bond in $G_{3-i}$ for either $i=1$ or $2$.
\end{proof}

In order to use this observation algorithmically, we first have to discuss how to efficiently find (and work with) the necessary decomposition of an arbitrary graph into its 2-sum components.

Let $G=(V,E)$ be a $2$-connected multigraph (i.e., we allow parallel edges) and let the nodes $\{v,w\}$ be a \emph{split pair} in $G$, i.e., $G-\{v,w\}$ is disconnected or there are parallel edges connecting $v$ and $w$.
The \emph{split classes} of $\{v,w\}$ are given by a partition $E_1, \dots ,E_k$ of $E$ such that two edges are in a common split class if and only if there is a path between them neither containing $v$ nor $w$ as an internal node. As $G$ is $2$-connected, it is easy to see that $v$ and $w$ are both incident to every split class. 
For a split class $C$ let $\widebar{C}=E\setminus C$.
A \emph{Tutte split} replaces $G$ by the two multigraphs $G_1=(V(C),C \cup \{e\})$ and $G_2=(V(\widebar{C}), \widebar{C} \cup \{e\})$, provided that $G_1-e$ or $G_2-e$ remains $2$-connected.
Thereby, $e$ is a new \emph{virtual} edge connecting $v$ and $w$.
Observe that this operation may yield parallel edges.
Iteratively splitting multigraphs via Tutte splits gives the unique \emph{$3$-connectivity decomposition} 
of $G$. Its components, the so-called \emph{skeletons}, can be partitioned into the following sets: a set $S$ of cycles, a set $P$ of edge bundles (two nodes joined by at least $3$ edges), and a set $R$ of simple $3$-connected graphs. See, e.g., \cite{tutte,triconnected}.

We use a data structure to efficiently consider the components of the $3$-connectivity decomposition of~$G$.
The \emph{SPR-tree} $T=T(G)$ has a node for each element in $S$, $P$, and~$R$~\cite{dibatt,spr}\footnote{The data structure is also known as \emph{SPQR-tree}. However, the originally proposed nodes of type $Q$ (as well as the tree's orientation) have often turned out to be superfluous.}. 
For a node $\alpha \in V(T)$, let $H_\alpha$ denote its corresponding skeleton. 
Two nodes $\alpha,\beta \in V(T)$ are adjacent if and only if $H_\alpha$ and $H_\beta$ share a virtual edge. 
$G$ can be reconstructed from $T$ by taking the non-strict $2$-sum of components whenever their corresponding nodes are adjacent in $T$. 
Following this interpretation, $P$-nodes containing a non-virtual edge represent strict $2$-sums of their adjacent components of the decomposition. 
$T$ has only linear size and can be computed in $\mathcal{O}(|E|)$ time~\cite{triconnected}.

\begin{thm}\label{thm:Reduction_3connectivity_Components}
	\MB on arbitrary graphs can be solved in the same time complexity as on (simple) $3$-connected graphs.
\end{thm}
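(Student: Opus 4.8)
The plan is to turn the clique-sum dichotomy of \Cref{lem:BondsOverCliqueSums} into a dynamic program over the $3$-connectivity decomposition encoded by the SPR-tree. First I would reduce to $2$-connected graphs. If $G$ has a cut-node, write $G=G_1\oplus_1 G_2$; by part (i) of \Cref{lem:BondsOverCliqueSums} every bond lies entirely in one block, and conversely every bond of a block $B$ extends to a bond of $G$ by placing all other blocks on the side of the shared cut-node. Thus a maximum bond of $G$ is a maximum bond over the biconnected components, which are obtained in linear time from the block-cut tree, and it suffices to treat $2$-connected graphs.

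For a $2$-connected $G$ I would compute the SPR-tree $T=T(G)$ in $\mathcal{O}(|E|)$ time and perform a single post-order traversal. Rooting $T$ arbitrarily, each non-root node $\alpha$ inherits a pair of poles $\{s,t\}$ from the virtual edge joining its skeleton $H_\alpha$ to the parent. Let $G_\alpha$ denote the expansion of the subtree at $\alpha$, and let $P_\alpha$ be the maximum weight of a bond of $G_\alpha$ separating $s$ and $t$. The crucial recursion, proved inductively from part (ii) of \Cref{lem:BondsOverCliqueSums}, is that $P_\alpha$ is determined by $H_\alpha$ together with the already-computed children values: assign every real edge of $H_\alpha$ its weight $c_e$ and every virtual edge to a child $\beta$ the weight $P_\beta$. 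Cutting such a virtual edge (the poles of $\beta$ separated) contributes exactly $P_\beta$, whereas leaving it uncut forces the whole expansion of $\beta$ onto one side and contributes $0$ -- this is precisely the dichotomy of the Observation. Hence $P_\alpha$ equals the maximum weight bond of this weighted skeleton subject to the parent edge being cut.

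To extract the global optimum I would use that iterating part (ii) of \Cref{lem:BondsOverCliqueSums} realizes every nonempty bond of $G$ at a unique skeleton $H_\alpha$: both sides of the partition meet the native vertices of $H_\alpha$ while everything above $\alpha$ lies on a single side, so $\alpha$'s poles are together. Defining $R_\alpha$ as the maximum weight bond of the weighted skeleton $H_\alpha$ with the parent edge \emph{not} cut (and no constraint at the root), the maximum bond of $G$ equals $\max_\alpha R_\alpha$. It remains to evaluate, for each skeleton, a weighted maximum bond with one edge forced cut or uncut. For an $S$-node (cycle) or $P$-node (edge bundle) this is immediate in time $\mathcal{O}(|E(H_\alpha)|)$; for an $R$-node, $H_\alpha$ is simple and $3$-connected, so I invoke the assumed $3$-connected \MB algorithm, forcing the parent edge in or out by temporarily setting its weight to $\pm M$ with $M=1+\sum_e|c_e|+\sum_\beta|P_\beta|$ and correcting afterwards. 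This keeps the instance simple and $3$-connected and costs a constant number of solver calls per $R$-node on an instance of size $|E(H_\alpha)|$. As the skeletons partition $E(G)$ up to $\mathcal{O}(|E|)$ virtual edges, their sizes sum to $\mathcal{O}(|E|)$; assuming the $3$-connected running time $T$ is superadditive, $\sum_\alpha T(|H_\alpha|)\le T(\mathcal{O}(|E|))$, which together with the linear-time decomposition and traversal gives the claim.

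The step I expect to be the main obstacle is the correctness of the recursion, i.e.\ establishing a value-preserving correspondence between global bonds of $G$ and the choices of the dynamic program. The delicate part is the connectivity bookkeeping behind \Cref{lem:BondsOverCliqueSums}(ii): one must verify that whenever a child's poles end up on the same side, no nontrivial bond inside that child can survive globally, since its inner side would become an isolated component of the opposite global side; this is exactly what forces the clean \qq{$P_\beta$ versus all-on-one-side} dichotomy and what makes the weighting of virtual edges legitimate. A secondary, more technical point is the careful handling of the SPR-tree conventions -- distinguishing $P$-nodes that carry a real edge (strict $2$-sums) from purely virtual $P$-nodes and $S$-nodes, and treating virtual versus real edges consistently -- so that the reconstruction via $2$-sums matches the graph $G$.
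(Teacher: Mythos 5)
Your proposal is correct and follows essentially the same route as the paper: reduce to biconnected blocks via the $1$-sum case of the clique-sum observation, then run a bottom-up dynamic program on the SPR-tree in which each virtual edge to a child is weighted by that child's optimal pole-separating bond value, $S$- and $P$-skeletons are handled trivially, $R$-skeletons are passed to the assumed $3$-connected solver with the parent edge forced in or out, and the global optimum is the maximum over the \qq{parent edge uncut} values at all tree nodes. The only cosmetic differences are that you force an edge out of the bond with a large negative weight where the paper contracts it, and that you phrase the traversal as an explicit recursion rather than iterative leaf pruning; the running-time accounting over the skeletons is identical.
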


\begin{proof}
	
	Let $G$ be an arbitrary graph and $c_e$ denote the edge weight of $e \in E(G)$.
	We denote by $\omega(G)$ the weight of a maximum bond in $G$.
	Moreover, let $p(m)\in \Omega(m)$ be the running time of \MB on $3$-connected graphs with $m$ edges.
	
	If $G$ is not $2$-connected, we can find a decomposition $G=G_1 \oplus_1 \dots \oplus_1 G_k$ into $2$-connected graphs $G_1, \dots, G_k$ in linear time with simple depth-first search.
	We can consider these components individually since
	$\omega(G)=\max_{i\in [k]}\omega(G_i)$ (see \Cref{lem:BondsOverCliqueSums}).
	Thus we may assume $G$ to be $2$-connected in the following.
	
	For an edge $e \in E(G)$, we can compute a maximum bond not containing $e$ (resp.\ containing $e$) 
	in the same time as $\omega(G)$ by contracting $e$ (resp.\ setting its weight to a large enough value, e.g., $2\sum_{f\in E(G)\setminus \{e\}}|c_f|$).
	
	First, we compute the SPR-tree $T=T(G)$ of $G$, attach weight $0$ to each virtual edge and root $T$ at an arbitrary node $\varrho \in V(T)$.
	Our algorithm will iteratively prune leaves in $T$.
	
	Let $\alpha$ be a leaf of $T$, $\beta$ be its parent, and denote by $e$ the common virtual edge in $H_\alpha$ and $H_\beta$. We compute the value $\omega^+_\alpha$ of a maximum bond in $H_\alpha$ containing $e$
	and the value $\omega^-_\alpha$ of a maximum bond not containing $e$.
	Then, we set the weight of $e$ in $H_\beta$ to $\omega^+_\alpha$, consider $e$ as an original (no longer virtual) edge in $H_\beta$, remove $\alpha$ from $T$, and proceed with the next leaf until only $\varrho$ remains. In the latter case, we compute $\omega_\varrho$ as the maximum bond in the skeleton $H_\varrho$ (where all virtual edges are already transformed into original edges with some weight computed in the previous steps).
	
	Consider the above setting when considering a leaf $\alpha$,
	and let $\delta^*$ be a maximum bond in $G$. 
	In case of $\delta^* \subseteq E(H_\alpha)\setminus\{e\}$, we have $\omega(G)=\omega^-_\alpha$.
	Otherwise, let $G'$ be the graph obtained from $T$ after the removal of $\alpha$ (in particular, $e$ is considered an original edge in $G'$ of weight $\omega^+_\alpha$).	
	Then, we have:
	\begin{align*}
		\omega(G)=\sum_{f \in \delta^*}c_f&=\sum_{\substack{f \in \delta^*\cap E(G')\\f \neq e}}c_f+\sum_{\substack{f \in \delta^*\cap E(H_\alpha)\\f \neq e }}c_f \\
		&=\sum_{\substack{f \in \delta^*\cap E(G')\\f \neq e}}c_f+
		\begin{cases}
		\omega^+_\alpha, &\text{if $e\in \delta^*$}
		\\
		0, &\text{if $e \notin \delta^*$}
		\end{cases}
		\\
		&=\omega(G').
	\end{align*}
	Overall, we have $\omega(G)=\max\{\omega^-_\alpha,\omega(G')\}.$
	Iterating our pruning strategy, we obtain 
	$$\omega(G)=\max\left\{\omega_\varrho, \max_{\alpha \in V(T(G))\setminus \varrho}\{\omega^-_\alpha \}\right\}.$$
	Observe that $T(G)$ is the original SPR-tree and the $\omega$-values are the maximum bonds as computed by the algorithm.
		
	Note that computing $\MB$ on $P$- and $S$-nodes can trivially be done in linear time: a maximum bond in $P$-nodes either picks all edges or no edge; a maximum bond in $S$-nodes picks either two edges of heaviest weight or none, if the sum of any two edge weights is negative.
	The SPR-tree can be built in linear time and the computations in a skeleton on $m'$ edges can be done in time $\mathcal{O}(p(m'))$.
	We attain an overall running time of
	$\mathcal{O}\Big(\sum_{\alpha\in V(T)}p(|E(H_\alpha)|) \Big) \leq \mathcal{O}(p(|E(G)|))$.
\end{proof}

Although not necessary in the above proof, we may mention that the bond polytope corresponding to a $P$-node is essentially that of a single edge; the bond polytope corresponding to an $S$-node, i.e., of a simple cycle, is discussed in~\Cref{thm:FacetDescriptionCn} below.

\section{Non-Interleaved Cycle Inequalities}\label{sec:Non-Interleaved}

We will now start the investigation of inequalities associated to cycles.
To this end, we introduce non-interleaved cycles and show that they give rise to facet-defining inequalities for bond polytopes of $3$-connected graphs.
We close this section by discussing this inequalities for $2$-connected graphs.

\begin{defi}
Let $G$ be a graph and $C \subseteq G$ be a cycle. $C$ is \emph{interleaved}, if there are (not necessarily neighboring) nodes $v_1, v_2,v_3, v_4 \in V(C)$ occurring along $C$ in this order such that there are node-disjoint paths in $G -E(C)$ connecting $v_1$ with $v_3$ and $v_2$ with $v_4$ respectively. Otherwise, $C$ is \emph{non-interleaved}.
\end{defi}

Given an interleaved cycle $C$ we call two paths $P$, $Q$ witnessing the interleavedness of $C$ \emph{interleaving} (with respect to $C$).
 Two such paths can be found in polynomial time if they exist \cite{GraphMinors13}.
The following lemma introduces valid inequalities for bond polytopes, which we call $\emph{non-interleaved cycle inequalities.}$

\begin{lem}\label{lem: InterleavedCycle_Valid}
	Let $G$  be a connected graph and $C \subseteq G$ be a cycle.
	The inequality $\sum_{e \in E(C)}x_e \leq 2$ is valid for $\Bond(G)$ if and only if $C$ is non-interleaved.
\end{lem}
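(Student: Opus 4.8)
The plan is to prove both directions of the equivalence, and the conceptual heart of the matter is understanding what a bond $\delta(S)$ looks like when restricted to the edges of a cycle $C$. The key observation is this: a cut $\delta(S)$ intersects the cycle $C$ in an \emph{even} number of edges, since walking around $C$ we must cross between $S$ and $V\setminus S$ an even number of times. So $|\delta(S)\cap E(C)|\in\{0,2,4,6,\dots\}$. The inequality $\sum_{e\in E(C)}x_e\le 2$ therefore asserts precisely that \emph{no bond} crosses $C$ four or more times. Thus the statement reduces to: $C$ is interleaved if and only if there exists a bond crossing $C$ at least four times.

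\medskip
\noindent\textbf{Necessity ($C$ interleaved $\Rightarrow$ inequality invalid).} First I would assume $C$ is interleaved, with nodes $v_1,v_2,v_3,v_4$ in cyclic order and node-disjoint paths $P$ (joining $v_1,v_3$) and $Q$ (joining $v_2,v_4$) in $G-E(C)$. The idea is to build a set $S\subseteq V$ whose cut crosses $C$ four times. Let $S$ consist of the arc of $C$ strictly between $v_1$ and $v_2$, the arc strictly between $v_3$ and $v_4$, together with the path $P$. One wants $G[S]$ and $G[V\setminus S]$ to be connected and $\delta(S)$ to cut all four ``corner'' edges of $C$ at $v_1,v_2,v_3,v_4$. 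The delicate point is connectivity of both sides; the node-disjoint paths $P$ and $Q$ are exactly what glue the two arcs of each side together (the $P$-arcs via $P$, the $Q$-arcs via $Q$), while disjointness of $P$ and $Q$ is what keeps the two sides from merging. I expect some care is needed to handle the case where $P$ or $Q$ passes through nodes of $C$, but the cyclic-order condition should keep this manageable; if necessary one truncates $P,Q$ to their first entry points.

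\medskip
\noindent\textbf{Sufficiency ($C$ non-interleaved $\Rightarrow$ inequality valid).} For the converse I would argue contrapositively: suppose some bond $\delta(S)$ has $|\delta(S)\cap E(C)|\ge 4$, and produce two interleaving paths, thereby showing $C$ is interleaved. Since $C$ crosses $S$ at least four times, the cycle decomposes into at least four arcs alternating between $S$ and $V\setminus S$; pick four crossing edges and let $v_1,v_2,v_3,v_4$ be endpoints chosen so that consecutive ones lie on opposite sides. Because $G[S]$ is connected, the two arcs-endpoints lying in $S$ are joined by a path inside $G[S]$; because $G[V\setminus S]$ is connected, the two endpoints in $V\setminus S$ are joined by a path inside $G[V\setminus S]$. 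These two paths live in disjoint vertex sets ($S$ versus $V\setminus S$), hence are automatically node-disjoint, and they avoid $E(C)$ up to the choice of endpoints. This yields the required interleaving paths and contradicts non-interleavedness.

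\medskip
\noindent\textbf{Main obstacle.} The routine part is the parity argument and the counting; the genuinely technical step is the connectivity bookkeeping in the \emph{necessity} direction --- verifying that the constructed $S$ yields connected $G[S]$ and $G[V\setminus S]$ while realizing exactly the four desired crossings, especially when the interleaving paths share vertices with $C$ or when the arcs between the $v_i$ are empty. I would isolate this as the crux and handle it by carefully choosing $S$ as a union of arcs plus one interleaving path, using the two node-disjoint paths to certify connectivity of each side separately.
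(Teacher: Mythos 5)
Your sufficiency direction (a bond meeting $C$ in at least four edges forces interleaving paths) is essentially the paper's argument: use parity to upgrade ``more than two'' to ``at least four'', list the components of $C-\delta(S)$ in cyclic order, observe that they alternate between $S$ and $V\setminus S$, and invoke connectivity of $G[S]$ and $G-S$ to join the first to the third and the second to the fourth component, respectively.

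The necessity direction, however, has a genuine gap. The set you propose, $S=(v_1,v_2)\cup(v_3,v_4)\cup V(P)$ with the arcs taken open, is in general not a bond, and the main obstruction is not the one you flag (the interleaving paths meeting $V(C)$). The real problem is that every vertex of $G$ outside $C\cup P\cup Q$ lands in $V\setminus S$, and such a vertex may be adjacent only to vertices of $S$ --- for instance a pendant vertex attached to an internal vertex of $P$, or an entire subgraph hanging off the arc $(v_1,v_2)$ --- so $G[V\setminus S]$ need not be connected; symmetrically, $Q$ may run through the open arcs you placed into $S$. Truncating $P$ and $Q$ does not repair this. The missing idea, and the way the paper proceeds, is to drop the arcs entirely and absorb the stray vertices via a reachability criterion: set $S=V(P)\cup\{v\in V(G): v \text{ is disconnected from } Q \text{ in } G-V(P)\}$. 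Then $V\setminus S$ is precisely the set of vertices that still reach the connected subgraph $Q$ after deleting $V(P)$, so $G-S$ is connected; $G[S]$ is connected because any $v\in S\setminus V(P)$ reaches $V(P)$ along a path all of whose internal vertices also fail to reach $Q$ in $G-V(P)$, hence lie in $S$; and since $v_1,v_3\in S$ while $v_2,v_4\in V(Q)\subseteq V\setminus S$, the cut $\delta(S)$ meets $E(C)$ at least four times, violating the inequality. Without some such explicit assignment of the remaining vertices of $G$, your construction does not go through.
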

\begin{proof}
	First assume $C$ is interleaved with interleaving paths $P$ and $Q$. 
	Set $$S=V(P)\cup\{v \in V(G): v \text{ is disconnected from $Q$ in $G\setminus P$} \}.$$
	Then $\delta(S)$ is a bond with $|\delta(S) \cap E(C)|=4$. Hence, $\sum_{e \in E(C)}x_e \leq 2$ is not valid.
	
	On the other hand, if $\sum_{e \in E(C)}x_e \leq 2$ is violated there exists
	a bond $\delta(S')$ with $|\delta(S')\cap C|>2$.
	Since there always has to be an even number of cut edges on a cycle, we must have $|\delta(S')\cap C|\geq 4$.
	Let $P_1, \dots, P_\ell$ be the components of $C - \delta(S')$ listed in their order of appearance along $C$. 
	We may assume $P_i \subseteq G[S']$ for odd~$i$.
	Then there needs to exist a path $Q_1\subseteq G[S']$ connecting $P_1$ and $P_3$ and a path $Q_2\subseteq G-S'$ connecting $P_2$ and $P_4$. 
	Clearly, $Q_1$ and $Q_2$ are interleaving with respect to $C$.
\end{proof}

\begin{lem}\label{lem:InterleavedCycle_Indced}
	Let $G$ be $3$-connected and $C\subseteq G$ be a non-interleaved cycle. Then $C$ is an induced cycle.
\end{lem}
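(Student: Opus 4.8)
\emph{Proof plan.} I would argue by contraposition: assuming $C$ is not induced, I will exhibit interleaving paths and conclude that $C$ is interleaved, contradicting the hypothesis. Since $G$ is simple, \qq{not induced} means that $C$ has a \emph{chord}, i.e.\ an edge $v_1v_3 \in E(G) \setminus E(C)$ with $v_1,v_3 \in V(C)$ that are non-adjacent along $C$. This chord splits $C$ into two arcs; writing $A$ for the set of internal vertices of one arc and $B$ for those of the other, non-adjacency of $v_1$ and $v_3$ on $C$ guarantees that both $A$ and $B$ are non-empty. The chord itself is the first interleaving path $P=v_1v_3$, which lies in $G-E(C)$ and uses exactly the vertices $v_1,v_3$.

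The heart of the argument is to produce the second path $Q$, connecting a vertex $v_2 \in A$ to a vertex $v_4 \in B$, lying in $G-E(C)$ and avoiding $v_1$ and $v_3$. Here I would invoke $3$-connectivity: since $\{v_1,v_3\}$ is not a cut of $G$, the graph $G-\{v_1,v_3\}$ is connected, so it contains a path $R$ from some $a \in A$ to some $b \in B$. This $R$ avoids $v_1,v_3$ but may still traverse cycle edges, so I would extract a suitable subpath. Tracing $R$ from $a$, let $v_4$ be the first vertex of $R$ lying in $B$, and let $v_2$ be the last vertex of $R$ in $A$ occurring before $v_4$. The segment $Q$ of $R$ from $v_2$ to $v_4$ then has all internal vertices off $C$ (it avoids $v_1,v_3$ by construction, meets no further vertex of $A$ after $v_2$, and no vertex of $B$ before $v_4$), and since no edge of $C$ joins an $A$-vertex to a $B$-vertex, $Q$ uses no cycle edge at all. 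Thus $Q \subseteq G-E(C)$, it is node-disjoint from $P$, and $v_1,v_2,v_3,v_4$ occur along $C$ in this cyclic order. This exhibits $C$ as interleaved, completing the contrapositive.

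The step I expect to require the most care is the extraction of $Q$: one must argue cleanly that the chosen segment contains no internal cycle vertex, so that it genuinely lives in $G-E(C)$ rather than merely in $G-\{v_1,v_3\}$. The remaining ingredients---non-emptiness of both arcs and the connectivity of $G-\{v_1,v_3\}$---are routine consequences of simplicity and $3$-connectivity, respectively. It is worth noting that $3$-connectivity is genuinely used here; for $2$-connected graphs the same statement need not hold, which is consistent with the section's later treatment of the $2$-connected case.
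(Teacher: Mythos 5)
Your proof is correct and follows essentially the same route as the paper: the chord serves as one interleaving path, and $3$-connectivity (via connectedness of $G-\{v_1,v_3\}$) supplies a second path between the interiors of the two arcs, from which a segment avoiding $E(C)$ is extracted. Your write-up is in fact somewhat more careful than the paper's, which asserts the existence of the second path in a single sentence.
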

\begin{proof}
	Let $G$ be $3$-connected and assume there is a non-interleaved cycle $C$ with a chord $e$. Let $C_1$ and $C_2$ be the cycles such that $E(C_1) \cap E(C_2)=\{e\}$ and $C=C_1 \triangle C_2$.
	There is at least one node in $C_1 \setminus C_2$ and at least one node in $C_2 \setminus C_1$. Furthermore, since $G$ is $3$-connected there exists a path $P$ with $P\cap E(C)=\emptyset$ connecting these two nodes. But then $P$ and $e$ are interleaving paths with respect to $C$.
\end{proof}

The following graph-theoretic lemma is the crucial ingredient for the facet theorem shown thereafter.

\begin{lem}\label{lem:constuct_set_S}
	Let $G=(V,E)$ be $3$-connected, $C \subseteq G$ be a cycle, and $f=pq \in E \setminus E(C)$. Then there exists an $S \subseteq V$ such that $\delta(S)$, $\delta(S \cup \{p\})$,  $\delta(S \cup \{q\})$, and $\delta(S \cup \{p,q\})$ are bonds each containing two edges of $C$.
\end{lem}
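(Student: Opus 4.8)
My plan is to reduce the four simultaneous bond conditions to a single combinatorial statement about a vertex bipartition, and then build that bipartition using $3$-connectivity. Since $S$, $S\cup\{p\}$, $S\cup\{q\}$, $S\cup\{p,q\}$ must be distinct, necessarily $p,q\notin S$; write $A=S$ and $B=V\setminus(S\cup\{p,q\})$, so $V=A\sqcup\{p,q\}\sqcup B$. Unravelling the eight connectivity requirements (both sides of each of the four cuts), I would first observe that they all hold precisely when $G[A]$ and $G[B]$ are connected and each of $p$ and $q$ has a neighbour in both $A$ and $B$: if $G[A]$ is connected and $p,q$ both attach to $A$, then $G[A]$, $G[A\cup\{p\}]$, $G[A\cup\{q\}]$, $G[A\cup\{p,q\}]$ are all connected, and symmetrically for $B$. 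Thus it suffices to produce a partition $V=A\sqcup\{p,q\}\sqcup B$ with $G[A],G[B]$ connected, with $p$ and $q$ each \emph{flexible} (having neighbours on both sides), and with $|\delta(S\cup X)\cap E(C)|=2$ for every $X\subseteq\{p,q\}$.

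For the cycle condition I would run a short case analysis on $k=|\{p,q\}\cap V(C)|$ to pin down where the two arcs $A_0:=A\cap V(C)$ and $B_0:=B\cap V(C)$ must sit. On a cycle the number of cut edges equals twice the number of maximal runs of a vertex subset, so "exactly two edges of $C$" in a variant means the cycle vertices of each side form a single nonempty arc in that variant. If $k=2$, then $pq$ is a chord and I take $A_0,B_0$ to be the two open arcs strictly between $p$ and $q$ (both nonempty since $pq\notin E(C)$); if $k=1$, say $p\in V(C)$, I take $A_0,B_0$ to be the two arcs flanking $p$; if $k=0$ the variants move no cycle vertex, so $A_0,B_0$ may be any two complementary nonempty arcs, to be chosen later. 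In the cases $k=1,2$ the on-cycle endpoints are automatically flexible, since their two cycle-neighbours lie in the two different arcs.

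To turn seeds into a full bipartition I would use a spanning-tree argument: given two disjoint \emph{connected} seeds $A'\supseteq A_0$ and $B'\supseteq B_0$ inside $H:=G-\{p,q\}$ (which is connected because $G$ is $3$-connected), choose a spanning tree of $H$ in which $A'$ and $B'$ are subtrees and cut one edge on the tree-path joining them; this yields connected $A\supseteq A'$ and $B\supseteq B'$ partitioning $V(H)$. Crucially, since $A_0\cup B_0=V(C)\setminus\{p,q\}$ already exhausts the cycle vertices available to $A$ and $B$, the extension adds no further cycle vertex to either side, preserving $A\cap V(C)=A_0$ and $B\cap V(C)=B_0$. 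This disposes of $k=2$ immediately (take $A'=A_0$, $B'=B_0$) and, for $k=1$, reduces everything to making the single off-cycle endpoint $q$ flexible.

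The main obstacle is flexibility of the off-cycle endpoints, which is where $3$-connectivity must genuinely be used. For $k=1$ I would, working in the $2$-connected graph $G-p$, invoke the fan version of Menger's theorem to obtain two internally disjoint $q$–$V(C)$ paths landing at distinct cycle vertices $c_a,c_b$; choosing the arc split so that $c_a\in A_0$ and $c_b\in B_0$ and placing the two paths (minus $q$) into the respective seeds makes $q$ flexible, after which the extension lemma finishes. The genuinely hard case is $k=0$, where $p$ and $q$ must be made flexible simultaneously: here there are at most two vertex-disjoint $\{p,q\}$–$C$ paths, so $p$ and $q$ cannot be routed to $C$ independently. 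I expect the resolution to run through the \emph{bridge} of $C$ containing the edge $pq$: since $G$ is $3$-connected this bridge has at least three attachment vertices on $C$, and its interior is connected and contains both $p$ and $q$. Distributing these attachments across the two arcs and routing $p$ and $q$ to opposite sides through the bridge interior should yield the required flexibility. Verifying that the arcs can always be chosen to separate the attachments compatibly, and that the routing keeps both sides connected, is the step I expect to require the most care.
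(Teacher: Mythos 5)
Your reduction of the four bond conditions to a single requirement --- a partition $V=A\dotcup\{p,q\}\dotcup B$ with $G[A]$, $G[B]$ connected, $p$ and $q$ each having a neighbour in both $A$ and $B$, and $A\cap V(C)$, $B\cap V(C)$ forming complementary nonempty arcs --- is correct and is arguably a cleaner formulation than the paper's direct construction of $S^+$. Your spanning-tree extension of connected seeds and your treatment of the cases $|\{p,q\}\cap V(C)|\in\{1,2\}$ go through (the chord case $|\{p,q\}\cap V(C)|=2$ is one the paper silently omits; this is harmless in its application, where the cycle is chordless by \Cref{lem:InterleavedCycle_Indced}).

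However, the case $p,q\notin V(C)$ is the heart of the lemma, and there your argument stops exactly where the real work begins. Flexibility here means four adjacencies: $p$ must have a neighbour in $A$ \emph{and} in $B$, and likewise $q$; so ``routing $p$ and $q$ to opposite sides'' is not the right target. The observation that the bridge of $C$ containing $pq$ has at least three attachments does not by itself produce these four adjacencies: all of $p$'s neighbours other than $q$ may lie deep in the bridge interior, and, as you yourself note, there may be only two vertex-disjoint $\{p,q\}$--$C$ paths, so the two escape routes of $p$ and the two of $q$ cannot in general be chosen pairwise disjoint. The paper's resolution is precisely the construction you are missing: two triplets of paths $(P_1,P_2,Q_1)$ and $(P_1,Q_1,Q_2)$, with $P_i$ from $p$ to $C$ and $Q_i$ from $q$ to $C$, each triplet pairwise disjoint but with $P_2$ and $Q_2$ permitted to intersect each other. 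One then places $P_1$, $Q_1$ and an arc $R$ of $C$ joining their endpoints (and avoiding the endpoints of $P_2$, $Q_2$) into $S$, which forces $P_2-p$ and $Q_2-q$ onto the other side and yields all four adjacencies. Obtaining these triplets requires a nontrivial rerouting argument --- repeatedly truncating a new path at its first intersection with the already fixed ones and using $3$-connectivity to exclude the bad intersection patterns. Without this, or an equivalent argument, your proof of the $p,q\notin V(C)$ case is incomplete.
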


\begin{proof}
	We consider two cases depending on whether $pq$ is adjacent to $C$ or not.

	{\it Case 1:} $|\{p,q\} \cap V(C)| =1$. 
		We may assume $p \in V(C)$ and $q \notin V(C)$. Since $G$ is $3$-connected, there exist internally node-disjoint paths $P$ and $Q$ with $f \notin P,~Q$ connecting $p$ and $q$. We may assume $|E(P) \cap E(C)| \geq 1$, since $p$ is not a cut-node. 
		Now, set
		$$S^+=V(P) \cup \{v \in V:v \text{ is disconnected from $C \setminus P$ in $G\setminus P$}\}$$
		and $S=S^+ \setminus \{p,q\}$.
		In the following, we show that the cuts $\delta(S)$, $\delta(S \cup \{p\})$, $\delta(S \cup \{q\})$, and $\delta(S^+)$ are indeed bonds.
		To this end it suffices to prove that $G[S]$ and $G - S^+$ are both connected, and $p$ and $q$ are adjacent to both of these graphs.
		
		By construction, $G- S^+$ is connected, and the nodes $p$ and $q$ are adjacent to $G[S]$. Moreover, $p$ is incident to two edges in $C$ and only one of these is contained in $G[S^+]$. Thus, $p$ is adjacent to $G- S^+$. 
		Furthermore, the path $Q$ connects $q$ and $C$, yielding that $q$ is adjacent to $G - S^+$.
		For each node $v \in S$ there are three disjoint paths $P'_1$, $P'_2$, and $P'_3$ connecting $v$ and $C$.
		Since $v \in S$ we have $V(P'_i) \cap V(P) \neq \emptyset$ for each $i\in [3]$.
		Since at most two of these paths contain $p$ or $q$, there is a path connecting $v$ with some node in $V(P)\setminus \{p,q\}$. 
		Hence, $G[S]$ is connected.
		
	{\it Case 2:} $p,q \notin V(C)$.
		We prove that there are paths $P_1, P_2$ connecting~$p$ with~$C$, and $Q_1,Q_2$ connecting~$q$ with $C$ such that the paths of the triplets $(P_1,P_2,Q_1)$ and $(P_1,Q_1,Q_2)$ are pairwise disjoint within their triplet.
		Then, we may assume (possibly after exchanging indices) that there is a path $R \subseteq C$ connecting $P_1$ and~$Q_1$ such that $R \cap P_2= \emptyset$ and $R \cap Q_2=\emptyset$. Then, setting 
		$S'=V(R) \cup V(P_1) \cup V(Q_1)$ it is straight forward to verify that
		$$S^+=S' \cup \{v \in V(G):v \text{ is disconnected from $C \setminus R$ in $G-S'$}\}$$
		and $S=S^+\setminus \{p,q\}$ yield the claimed bonds (cf. \Cref{fig:Construct_S+}).
		
		We start by proving the existence of a triplet $(P_1,P_2,Q)$ of disjoint paths $P_1,P_2$ connecting $p$ with $C$ and $Q$ connecting $q$ with $C$. 
		For a path $P$ and nodes $v,w \in V(P)$ we denote by $P[v{:}w]$ the subpath of $P$ from $v$ to $w$.
		Since $G$ is $3$-connected, by Menger's theorem, there exist three disjoint (except of $p$) paths $P_1,P_2,P_3$ connecting~$p$ with~$C$.
		If one of them, say $P_3$, contains $q$ we are done by choosing $Q=P_3 \setminus (P_3[p{:}q]-q)$.
		 Otherwise, since $G$ is $3$-connected, there exists a path $Q_0$ connecting $q$ and $C$.
		If $Q_0$ is not the claimed path $Q$, it intersects at least one of the $P_i$.
		We may assume that the first such intersection is between $Q_0$ and $P_3$ and is at a node $x$.
		Then, we set $Q=Q_0[q{:}x] \cup P_3[x{:}c]$.
		By construction, $Q$ is disjoint from $P_1$ and $P_2$.

		Now, we construct $Q_1$ and $Q_2$ given the disjoint triplet of paths $(P_1,P_2,Q)$ such that $P_1=(p,p_1^1\dots,c_1)$, $P_2=(p,p_1^2\dots,c_2)$, and $Q=(q,q_1\dots ,c)$. We may assume that $c_1,c_2,c \in V(C)$ are pairwise distinct (cf. \Cref{fig:ConstructS_a}).
		Since $G$ is $3$-connected, there exists a path $Q'$ connecting $q$ and $C$ such that $p,q_1 \notin Q'$.
		If $Q'$ is disjoint from $P_1,P_2$ and $Q$ we can set $Q_1=Q$ and $Q_2=Q'$.
		So, assume $Q'$ is not disjoint from $P_1$, $P_2$, $Q$.
		Let $x$ denote the first node along $Q'$ (starting at $q$) such that $x \in P_1$, $x \in P_2$, or $x \in Q$ (cf. \Cref{fig:ConstructS_b}).
		If $x \in P_i$, $i \in [2]$, we set $Q_1=Q$ and $Q_2=Q'[q{:}x] \cup P_i[x{:}c_2]$.
		Now, assume $x \in Q$.
		Since $G$ is $3$-connected, there exists a path $Q''$ connecting a node $a \in V(Q[q{:}x])\setminus\{q,x\}$ with $C$ such that the two neighbors of $a$ in $Q$ are not contained in $Q''$.
		If $Q''$ is disjoint from $P_1,P_2,Q$, we set $Q_1=Q[q{:}a] \cup Q''$ and $Q_2=Q'[q{:}x] \cup Q[x{:}c]$.
		So, assume $Q''$ intersects $P_1$, $P_2$ or $Q$ and denote by $y$ the first such intersection along $Q''$ starting in $a$ (cf. \Cref{fig:ConstructS_c}). 
		We may assume $p \notin V(Q'')$ because if there was no such path, $G -\{x,p\}$ would be disconnected.
		Moreover, we may assume $y \notin V(Q)$ because otherwise $G - \{y,p\}$ would be disconnected.
		Thus  (after possibly renaming), we may assume $y \in P_2[p_1^2{:}c_2]$.
		We have the claimed paths $Q_1=Q'[q{:}x] \cup Q[x{:}c]$ and $Q_2= Q[q{:}a] \cup Q''[a{:}y] \cup P_1[y{:}c_1]$.
\end{proof}

\begin{figure}
		\begin{subfigure}[c]{.5\textwidth}
		\centering
		\begin{tikzpicture}
			\node (label) at (-1.3,-.3) {$C$};
			\draw[black] ([shift=(10 :1)]0,0) arc (10:170:1cm);
			\draw[red] ([shift=(10 :1)]0,0) arc (10:-190:1cm);
			\node [circle,fill,scale=.5] (d1) at (40 :1) {} ;
			\node [red,circle,fill,scale=.5] (d2) at (10 :1) {} ;
			\node [circle,fill,scale=.5] (c1) at (140 :1) {} ;
			\node [red,circle,fill,scale=.5] (c2) at (170 :1) {} ;		
			
			\node[red] (P) at (-90 :.7) {\footnotesize$R$} ;	
			
			\node[draw,circle,fill,scale=.5] (p) at (-1,3) {};
			\node[draw,circle,fill,scale=.5] (q) at ( 1,3) {};
			\node (p') at ($(p)+(-.2,.2)$){\footnotesize$p$};
			\node (q') at ($(q)+(.2,.2)$){\footnotesize$q$};
			
			\draw (p)--(q);
			\draw (p)--node[xshift=3mm]{\footnotesize$P_2$} (c1);
			\draw[red] (p) edge[in=130,out=-110] node[xshift=-3mm , yshift=0mm]{\footnotesize$P_1$} (c2);
			\draw (q)--node[xshift=-3mm]{\footnotesize$Q_2$} (d1);
			\draw[red] (q) edge[in=50,out=-70] node[xshift=3mm , yshift=0mm]{\footnotesize$Q_1$} (d2);
			
		\end{tikzpicture}
	\end{subfigure}%
	\hfill
	\begin{subfigure}[c]{.5\textwidth}
		\centering
		\begin{tikzpicture}
		
		\node (label) at (-1.3,-.3) {$C$};
		\draw[black] ([shift=(10 :1)]0,0) arc (10:170:1cm);
		\draw[red] ([shift=(10 :1)]0,0) arc (10:-190:1cm);
		\node [red,circle,fill,scale=.5] (d2) at (10 :1) {} ;
		\node [red,circle,fill,scale=.5] (c2) at (170 :1) {} ;		
		
		\node[red] (P) at (-90 :.7) {\footnotesize$R$} ;	
		
		\node[draw,circle,fill,scale=.5] (p) at (-1,3) {};
		\node[draw,circle,fill,scale=.5] (q) at ( 1,3) {};
		\node (p') at ($(p)+(-.2,.2)$){\footnotesize$p$};
		\node (q') at ($(q)+(.2,.2)$){\footnotesize$q$};
		\node [circle,fill,scale=.5] (c) at (90 :1) {} ;
		\node [circle,fill,scale=.5] (d) at (90 :2) {} ;
		
		\draw (p)--(q);
		\draw[red] (p) edge[in=130,out=-110] node[xshift=-3mm , yshift=0mm]{\footnotesize$P_1$} (c2);
		\draw[red] (q) edge[in=50,out=-70] node[xshift=3mm , yshift=0mm]{\footnotesize$Q_1$} (d2);
		\draw (p)--(d);
		\draw (q)--(d);
		\draw (d) -- node[xshift=-4mm , yshift=4mm] {\footnotesize$P_2$}node[xshift=4mm,yshift=4mm]{\footnotesize$Q_2$}(c);
		\end{tikzpicture}
	\end{subfigure}%
	
	\caption{The set $S$ (red) in the second case of the proof of \Cref{lem:constuct_set_S}}\label{fig:Construct_S+}
\end{figure}
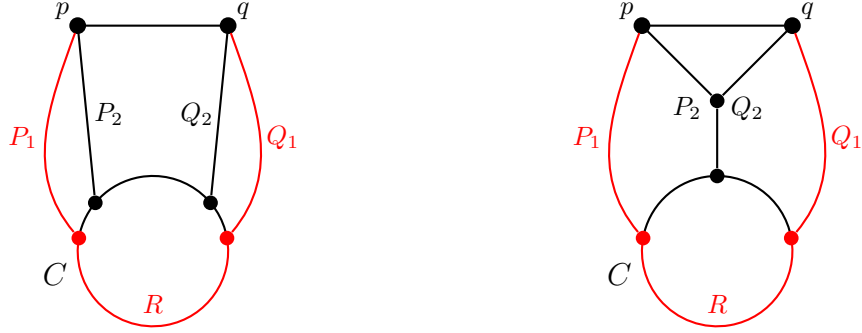

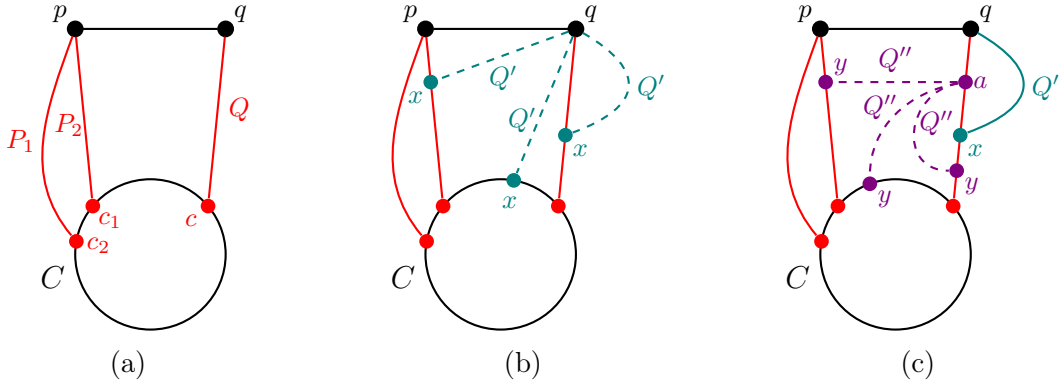
\begin{figure}
	\begin{subfigure}[c]{.3\textwidth}
		\centering
		\begin{tikzpicture}
			\node (label) at (-1.3,-.3) {$C$};
			\draw (0,0) circle[radius=1cm];
			\node [red, circle,fill,scale=.5] (c) at (40 :1) {} ;
			\node [red, circle,fill,scale=.5] (c1) at (140 :1) {} ;
			\node [red, circle,fill,scale=.5] (c2) at (170 :1) {} ;
			\node [red] (c')  at ( 40 :.7) {\footnotesize$c$} ;
			\node [red] (c1') at (140 :.7) {\footnotesize$c_1$} ;
			\node [red] (c2') at (170 :.7) {\footnotesize$c_2$};

			\node[draw,circle,fill,scale=.5] (p) at (-1,3) {};
			\node[draw,circle,fill,scale=.5] (q) at ( 1,3) {};
			\node (p') at ($(p)+(-.2,.2)$){\footnotesize$p$};
			\node (q') at ($(q)+(.2,.2)$){\footnotesize$q$};
			
			\draw (p)--(q);
			\draw[red] (p)--node[xshift=-2mm , yshift=-1mm]{\footnotesize$P_2$} (c1);
			\draw[red] (p) edge[in=130,out=-110] node[xshift=-3mm , yshift=0mm]{\footnotesize$P_1$} (c2);
			\draw[red] (q)--node[xshift=3mm , yshift=1mm]{\footnotesize$Q$} (c);
		\end{tikzpicture}
		\subcaption{}\label{fig:ConstructS_a}
	\end{subfigure}%
	\hfill
	\begin{subfigure}[c]{.3\textwidth}
		\centering
		\begin{tikzpicture}
			\node (label) at (-1.3,-.3) {$C$};
			\draw (0,0) circle[radius=1cm];
			\node [red, circle,fill,scale=.5] (c) at (40 :1) {} ;
			\node [red, circle,fill,scale=.5] (c1) at (140 :1) {} ;
			\node [red, circle,fill,scale=.5] (c2) at (170 :1) {} ;
			\node[draw,circle,fill,scale=.5] (p) at (-1,3) {};
			\node[draw,circle,fill,scale=.5] (q) at ( 1,3) {};
			\node (p') at ($(p)+(-.2,.2)$){\footnotesize$p$};
			\node (q') at ($(q)+(.2,.2)$){\footnotesize$q$};
			
			\draw (p)--(q);
			\draw[red] (p)-- (c1);
			\draw[red] (p) edge[in=130,out=-110] (c2);
			\draw[red] (q)-- (c);
			
			\node[circle,fill,scale=.5, blue!50!green] (x1) at (80:1){};
			\node[blue!50!green] (x1') at (80:.7) {\footnotesize$x$};
			\node[circle,fill,scale=.5, blue!50!green] (x2) at ($(p)!0.3!(c1)$){};
			\node[blue!50!green] (x2') at ($(x2)+(-.2,-.2)$){\footnotesize$x$};
			\node[circle,fill,scale=.5, blue!50!green] (x3) at ($(q)!0.6!(c)$){};
			\node[blue!50!green] (x3') at ($(x3)+(.2,-.2)$){\footnotesize$x$};
			\draw[dashed,blue!50!green] (q)--node[xshift=-3mm , yshift=-2mm]{\footnotesize$Q'$}(x1);
			\draw[dashed,blue!50!green] (q)--node[xshift=0mm , yshift=-3mm]{\footnotesize$Q'$}(x2);
			\draw[dashed,blue!50!green] (q)edge[in=20,out=-30,distance=1cm]node[xshift=3mm , yshift=0mm]{\footnotesize$Q'$}(x3);
		\end{tikzpicture}
		\subcaption{}\label{fig:ConstructS_b}
	\end{subfigure}%
	\hfill
	\begin{subfigure}[c]{.3\textwidth}
		\centering
		\begin{tikzpicture}
			\node (label) at (-1.3,-.3) {$C$};
			\draw (0,0) circle[radius=1cm];
			\node [red, circle,fill,scale=.5] (c) at (40 :1) {} ;
			\node [red, circle,fill,scale=.5] (c1) at (140 :1) {} ;
			\node [red, circle,fill,scale=.5] (c2) at (170 :1) {} ;
			\node[draw,circle,fill,scale=.5] (p) at (-1,3) {};
			\node[draw,circle,fill,scale=.5] (q) at ( 1,3) {};
			\node (p') at ($(p)+(-.2,.2)$){\footnotesize$p$};
			\node (q') at ($(q)+(.2,.2)$){\footnotesize$q$};
			\draw (p)--(q);
			\draw[red] (p)--(c1);
			\draw[red] (p) edge[in=130,out=-110](c2);
			\draw[red] (q)--(c);
			
			\node[circle,fill,scale=.5, blue!50!green] (x3) at ($(q)!0.6!(c)$){};
			\draw[blue!50!green] (q)edge[in=20,out=-30,distance=1cm]node[xshift=3mm , yshift=0mm]{\footnotesize$Q'$}(x3);
			\node[blue!50!green] (x3') at ($(x3)+(.2,-.2)$){\footnotesize$x$};
			
			\node[circle,fill,scale=.5, blue!50!red] (a) at ($(q)!0.3!(c)$){};
			\node[blue!50!red] (a') at ($(a)+(.2,0)$){\footnotesize$a$};
			\node[circle,fill,scale=.5, blue!50!red] (y1) at (110:1){};
			\node[blue!50!red] (y1') at ($(y1)+(.2,-.2)$){\footnotesize$y$};
			\node[circle,fill,scale=.5, blue!50!red] (y2) at ($(p)!0.3!(c1)$){};
			\node[blue!50!red] (y2') at ($(y2)+(.2,.2)$){\footnotesize$y$};
			\node[circle,fill,scale=.5, blue!50!red] (y3) at ($(q)!0.8!(c)$){};
			\node[blue!50!red] (y3') at ($(y3)+(.2,-.2)$){\footnotesize$y$};
			\draw[dashed, blue!50!red] (a)edge[in=90,out=-170]node[xshift=-2mm , yshift=1mm]{\footnotesize$Q''$}(y1);
			\draw[dashed, blue!50!red] (a)--node[xshift=0mm , yshift=3mm]{\footnotesize$Q''$}(y2);
			\draw[dashed, blue!50!red] (a)edge[in=180,out=-170,distance=.7cm]node[xshift=3mm , yshift=1mm]{\footnotesize$Q''$}(y3);
		\end{tikzpicture}
		\subcaption{}\label{fig:ConstructS_c}
	\end{subfigure}%
	\caption{Visualization of obtaining the necessary paths in the second case of the proof of \Cref{lem:constuct_set_S}}
\end{figure}

\begin{thm}\label{thm:NonInterleavedCycleFacet}
	Let $G=(V,E)$ be $3$-connected and $C \subseteq G$ be a non-interleaved cycle. Then $\sum_{e \in E(C)}x_e \leq 2$ defines a facet of $\Bond(G)$.
\end{thm}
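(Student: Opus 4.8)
The plan is to use the standard facet criterion for the full-dimensional polytope $\Bond(G)$ (recall $\dim\Bond(G)=|E|$ by \Cref{thm:Vertices}): it suffices to take an arbitrary valid inequality $a^\mathsf{T}x\le b$ whose equality set contains the face $F=\Bond(G)\cap\{\sum_{e\in E(C)}x_e=2\}$ and to show that $(a,b)$ must be a positive multiple of $(\mathbf 1_{E(C)},2)$. Validity and non-triviality of the defining inequality are already in hand: $\sum_{e\in E(C)}x_e\le 2$ is valid by \Cref{lem: InterleavedCycle_Valid} (as $C$ is non-interleaved), the empty bond $\bnull$ shows $F\neq\Bond(G)$, and \Cref{lem:constuct_set_S} applied to any off-cycle edge (one exists since a $3$-connected graph has minimum degree at least $3$, so $C\neq G$) produces a bond cutting exactly two edges of $C$, so $F\neq\emptyset$.

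First I would kill all coefficients off the cycle. For every edge $f=pq\in E\setminus E(C)$, \Cref{lem:constuct_set_S} yields a set $S\subseteq V\setminus\{p,q\}$ for which $\delta(S)$, $\delta(S\cup\{p\})$, $\delta(S\cup\{q\})$, $\delta(S\cup\{p,q\})$ are bonds each meeting $C$ in exactly two edges; hence all four lie in $F$ and satisfy $a^\mathsf{T}x=b$. \Cref{lem:BarahonaZeroKoeff} then forces $a_{pq}=0$. Thus $\supp(a)\subseteq E(C)$.

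Next I would show that $a$ is constant on $E(C)$ by a local \qq{flip} argument. Write $C=v_1\cdots v_nv_1$ with edges $e_i=v_iv_{i+1}$. Since $G$ is $3$-connected and $C$ is induced (\Cref{lem:InterleavedCycle_Indced}), every cycle vertex $v_i$ is incident to an edge $f=v_iq\in E\setminus E(C)$. Applying \Cref{lem:constuct_set_S} with $p=v_i$ gives bonds $\delta(S)$ and $\delta(S\cup\{v_i\})$ in $F$ differing only in the side of $v_i$. Moving $v_i$ across the cut flips the cut-status of exactly its two cycle edges $e_{i-1},e_i$ (its $C$-neighbours do not move); since both cuts meet $C$ in exactly two edges, a short counting argument (equating $2=m+k$ before and $2=m+(2-k)$ after, where $m$ counts the unchanged cut edges of $C$ and $k\in\{0,1,2\}$ those incident to $v_i$) forces $k=1$, i.e.\ exactly one of $e_{i-1},e_i$ is cut before and the other after. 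As $a$ vanishes on the off-cycle edges at $v_i$, evaluating $a^\mathsf{T}x^{\delta(S)}=a^\mathsf{T}x^{\delta(S\cup\{v_i\})}=b$ gives $a_{e_{i-1}}=a_{e_i}$. Ranging over all $i$ yields $a=\lambda\mathbf 1_{E(C)}$, and evaluating on any point of $F$ gives $b=2\lambda$; validity at $\bnull$ forces $\lambda>0$. Hence $F$ is a facet.

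The main obstacle is entirely outsourced to \Cref{lem:constuct_set_S}, whose $3$-connectivity path constructions are the technical heart; given that lemma together with \Cref{lem:BarahonaZeroKoeff}, the remaining work is the bookkeeping in the flip argument---in particular verifying the \qq{exactly one flips each way} parity, which is precisely what equates the two incident coefficients rather than forcing them to sum to zero.
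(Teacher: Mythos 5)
Your proposal is correct and follows essentially the same route as the paper: both take a facet-defining inequality $a^\mathsf{T}x\leq b$ dominating the face, kill all off-cycle coefficients via \Cref{lem:constuct_set_S} combined with \Cref{lem:BarahonaZeroKoeff}, and then equate the coefficients on $E(C)$. The only divergence is in that last step, where the paper directly evaluates $a$ on the bonds $\delta(\{v\})$ and $\delta(\{v,w\})$ for $v\in V(C)$ and $vw\in E(C)$ (each cutting exactly two cycle edges since $C$ is induced), whereas you rerun \Cref{lem:constuct_set_S} at each cycle vertex and extract $a_{e_{i-1}}=a_{e_i}$ via a parity/flip argument --- equally valid, just slightly more roundabout.
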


\begin{proof}
	Since by \Cref{lem: InterleavedCycle_Valid}, $\sum_{e \in E(C)}x_e \leq 2$ is valid for $\Bond(G)$, there is a facet-defining inequality $a^\mathsf{T}x \leq b$ such that 
	$\Bond(G)\cap\{\sum_{e \in E(C)}x_e = 2 \} \subseteq \Bond(G)\cap\{a^\mathsf{T}x = b \}$.
	We show equality of these two faces by proving that $a_e=a_f$ for all $e,f \in E(C)$ and $a_f=0$ for each $f \notin E(C)$.

	Let $f=pq \in E(G) \setminus E(C)$. By \Cref{lem:constuct_set_S} there is a set $S \subseteq V$ such that $\delta(S)$, $\delta(S \cup \{p\})$, $\delta(S \cup \{q\})$, and $\delta(S \cup \{p,q\})$ are bonds, each satisfying $\sum_{e \in E(C)}x_e = 2$ and thus, $a^\mathsf{T}x=b$. Hence, \Cref{lem:BarahonaZeroKoeff} yields $a_{pq}=0$.
	Note that for each $v \in V(C)$ and $vw \in E(C)$ the bond vectors  $x^{\delta(\{v\})}$ and $x^{\delta(\{v,w\})}$ satisfy $\sum_{e \in E(C)}x_e = 2$ and thus, $a^\mathsf{T}x=b$. Considering a $3$-path on nodes $v_1,\dots, v_4$ labeled in order of their appearance along $C$, we have
	\begin{align*}
		0 &= a^\mathsf{T}x^{\delta(v_2)}-a^\mathsf{T}x^{\delta(v_3)}=(a_{v_1v_2}+a_{v_2v_3}) -(a_{v_2v_3}-a_{v_3v_4})=a_{v_1v_2}-a_{v_3v_4},\\
		0 &= a^\mathsf{T}x^{\delta(\{v_2,v_3\})}-a^\mathsf{T}x^{\delta(v_3)}=(a_{v_1v_2}+a_{v_3v_4}) -(a_{v_2v_3}-a_{v_3v_4})=a_{v_1v_2}-a_{v_2v_3}.
	\end{align*}
	Thus, we have $a_{v_1v_2}=a_{v_2v_3}=a_{v_3v_4}$ yielding $a_e=a_f$ for all $e,f \in E(C)$.
\end{proof}

We close this section by discussing non-interleaved cycle inequalities in $2$-connected graphs.
Even though validity of the non-interleaved cycle inequalities is maintained when only assuming $2$-connectivity, such inequalities are not facet-defining in general for bond polytopes of non-$3$-connected graphs.

\begin{ex}\label{obs:2co0nnectedPlanar}
	Let $G$ be the graph shown in \Cref{fig:4gonIn4gon}. Computing the facet description of $\Bond(G)$, e.g., using the software package \emph{Normaliz} \cite{normaliz}, we see the following:
	\begin{itemize}
		\item Consider the cycle induced by $v_1,v_2,v_3,v_4$. The non-interleaved cycle inequality associated to this cycle is not facet-defining for $\Bond(G)$.
		\item Consider the cycle induced by $v_1,v_2,v_3,v_7,v_6,v_5$. The non-interleaved cycle inequality associated to this cycle is not facet-defining for $\Bond(G)$.
		~\hfill$\blacktriangleleft$
	\end{itemize}

\end{ex}

\begin{figure}
	\centering
	\begin{tikzpicture}[scale=.7]
	\node[circle,draw,scale=.8] (a) at (0,0) {$v_1$};
	\node[circle,draw,scale=.8] (b) at (4,0) {$v_2$};
	\node[circle,draw,scale=.8] (c) at (4,4) {$v_3$};
	\node[circle,draw,scale=.8] (d) at (0,4) {$v_4$};
	\node[circle,draw,scale=.8] (1) at (1,1) {$v_5$};
	\node[circle,draw,scale=.8] (2) at (3,1) {$v_6$};
	\node[circle,draw,scale=.8] (3) at (3,3) {$v_7$};
	\node[circle,draw,scale=.8] (4) at (1,3) {$v_8$};
	\draw (1)--(2)--(3)--(4)--(1);
	\draw (a)--(b)--(c)--(d)--(a);
	\draw (1)--(a);
	\draw (3)--(c);
	\end{tikzpicture}
	\caption{Graph from \Cref{obs:2co0nnectedPlanar}}\label{fig:4gonIn4gon}
\end{figure}

However, we can give a necessary condition for a non-interleaved cycle in a $2$-connected graph giving rise to a facet-defining inequality. To do this, we call a non-interleaved cycle $C$ \emph{maximal non-interleaved} if there is no non-interleaved cycle $C'$ with $|E(C) \setminus E(C')|=1$.

\begin{thm}
	Let $G$ be a connected graph and $C \subseteq E(G)$ be a cycle. If $\sum_{e \in E(C)}x_e \leq 2$ defines a facet of $\Bond(G)$, $C$ is maximal non-interleaved.
\end{thm}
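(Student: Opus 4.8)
The plan is to argue by contraposition. Assuming that $\sum_{e\in E(C)}x_e\le 2$ is facet-defining, it is in particular valid, so \Cref{lem: InterleavedCycle_Valid} already forces $C$ to be non-interleaved; the whole content of the statement is therefore the maximality. So I would suppose, for contradiction, that $C$ is non-interleaved but \emph{not} maximal non-interleaved, giving a non-interleaved cycle $C'$ with $|E(C)\setminus E(C')|=1$. Let $e_0=uv$ be the unique edge of $C$ outside $C'$ and $P:=C-e_0$ the complementary $u$–$v$ path. Since $E(P)=E(C)\setminus\{e_0\}\subseteq E(C')$ is a connected subgraph of the cycle $C'$, it is an arc of $C'$, so $C'$ decomposes as $C'=P\cup Q$ with $Q$ the complementary $u$–$v$ arc; this $Q$ is edge-disjoint from $P$ and avoids $e_0$ (as $e_0\notin E(C')$). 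Thus $e_0$, $P$, $Q$ form a theta-subgraph between $u$ and $v$, and $E(C)=E(P)\mathbin{\dot\cup}\{e_0\}$, $E(C')=E(P)\mathbin{\dot\cup}E(Q)$.

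The key step is to show that every bond tight for $C$ is automatically tight for $C'$. I would fix a bond $\delta=\delta(S)$ with $\sum_{e\in E(C)}x^\delta_e=2$ and set $z=x^\delta_{e_0}\in\{0,1\}$, $p=|\delta\cap E(P)|$, and $q=|\delta\cap E(Q)|$. As any cut meets a cycle in an even number of edges, both $p+z=|\delta\cap E(C)|$ and $p+q=|\delta\cap E(C')|$ are even. Tightness gives $p+z=2$, hence $(z,p)\in\{(0,2),(1,1)\}$. In the first case, evenness of $p+q$ together with the validity $p+q\le 2$ (from \Cref{lem: InterleavedCycle_Valid}, since $C'$ is non-interleaved) forces $q=0$; in the second case the same two facts force $q=1$. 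Either way $\sum_{e\in E(C')}x^\delta_e=p+q=2$, so $\delta$ is tight for $C'$ as well.

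It then remains to turn this into a polytopal contradiction. The face $F_C:=\Bond(G)\cap\{\sum_{e\in E(C)}x_e=2\}$ is the convex hull of the bond-vertices it contains (the vertices of $\Bond(G)$ being $\bnull$ and certain bonds by \Cref{thm:Vertices}), and each such vertex is tight for $C'$; hence $F_C\subseteq F_{C'}:=\Bond(G)\cap\{\sum_{e\in E(C')}x_e=2\}$, which is a proper face because $\bnull\in\Bond(G)$ lies off its defining hyperplane. If $F_C$ were a facet, then $\dim F_C=|E|-1$ (using $\dim\Bond(G)=|E|$ from \Cref{thm:Vertices}), which would force $\dim F_{C'}=|E|-1$ and hence $F_C=F_{C'}$. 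Since $\Bond(G)$ is full-dimensional, a facet has a unique supporting hyperplane, so the two hyperplanes $\{\sum_{E(C)}x_e=2\}$ and $\{\sum_{E(C')}x_e=2\}$ would coincide; but their normals (the indicator vectors of $E(C)$ and $E(C')$) differ already in the $e_0$-coordinate, a contradiction. Therefore $C$ must be maximal non-interleaved.

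I expect the main obstacle to be the theta-decomposition and the parity bookkeeping that yield $F_C\subseteq F_{C'}$: one must verify that $P$ really sits as an arc of $C'$, and that the two validity bounds combined with the even-intersection property pin down $q$ exactly in both cases. Once this containment of faces is in hand, the concluding argument is routine, relying only on full-dimensionality of $\Bond(G)$ and the vertex characterization of \Cref{thm:Vertices}.
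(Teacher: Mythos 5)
Your proof is correct, but it takes a genuinely different route from the paper's. The paper's argument is purely algebraic and much shorter: writing $C=P\cup\{e_0\}$ and $C'=P\cup Q$ as in your theta decomposition, it observes that $\sum_{e\in E(C)}x_e\leq 2$ is the sum of the two valid inequalities $\sum_{e\in E(C')}x_e\leq 2$ (valid by \Cref{lem: InterleavedCycle_Valid} since $C'$ is non-interleaved) and $x_{e_0}-\sum_{e\in E(Q)}x_e\leq 0$ (the cut-polytope cycle inequality for the third cycle $\{e_0\}\cup Q$ of the theta, hence valid for $\Bond(G)$), and a facet-defining inequality of a full-dimensional polytope cannot decompose as such a sum. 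Your proof establishes by hand what this decomposition gives for free: the parity bookkeeping on $(z,p,q)$ is exactly the verification that tightness for $C$ forces tightness for both summands, and your concluding hyperplane-uniqueness argument replaces the appeal to the general decomposition principle. What your version buys is that it avoids invoking the cut-polytope cycle inequality from the literature, using only \Cref{lem: InterleavedCycle_Valid} and the even-intersection property of cuts with cycles; what it costs is the explicit case analysis and the extra step showing $P$ sits as an arc of $C'$ (which the paper's identity also needs implicitly, but never has to spell out). All steps in your argument check out, including the containment $F_C\subseteq F_{C'}$ via the vertex description of the face and the final contradiction from the differing $e_0$-coordinates of the two normals.
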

\begin{proof}
	Assume $C$ is not maximal. Let $C'$ be non-interleaved with $C \setminus C'=\{f\}$. Then, $\sum_{e \in E(C)}x_e \leq 2$ is the sum of the inequalities $\sum_{e \in E(C)'}x_e \leq 2$ and $x_f - \sum_{e \in E(C)'\setminus C}x_e \leq 0$. As shown in \Cref{lem: InterleavedCycle_Valid}, the first of the two inequalities is valid for $\Bond(G)$ and by \cite[Theorem 3.3]{OnTheCutPolytope} the latter is valid for $\CutP(G)$ and thus for $\Bond(G)$. Hence, $\sum_{e \in E(C)}x_e \leq 2$ cannot be a facet of $\Bond(G)$.
\end{proof}

There is a class of simple non-3-connected graphs such that the non-interleaved cycle inequalities are not only facet-defining but together with the homogeneous (cut polytope) facets suffice to fully describe their bond polytopes: 

\begin{thm}\label{thm:FacetDescriptionCn}
	For each $n \geq 3$, $\Bond(C_n)$ is completely defined by the following facet-defining inequalities 
	\begin{align*}
	-x_e &\leq 0 											\qquad \text{for each $e\in E(C_n)$,}\\
	x_e - \sum_{f \in E(C_n) \setminus\{e\}} x_f &\leq 0	\qquad \text{for each $e \in E(C_n)$,}\\
	\sum_{e \in E(C_n)}x_e &\leq 2.
	\end{align*}
\end{thm}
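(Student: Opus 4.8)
The plan is to identify $\Bond(C_n)$ explicitly as a pyramid over a hypersimplex and then read off both the complete description and the facets from that structure. First I would pin down the vertices: deleting any two edges from a cycle leaves two arcs, each connected, so every $2$-element edge set is a bond; conversely a cut of a cycle meets it in an even number of edges, and a bond must meet it in exactly two (three or more arcs cannot be partitioned into two connected sides). Together with the empty bond this shows that the bonds of $C_n$ are $\emptyset$ and the $\binom n2$ pairs $\{e,f\}$, so by \Cref{thm:Vertices}\emph{(i)} these are exactly the vertices of $\Bond(C_n)$. Writing $\mathbf 1_e\in\R^{E}$ for the $e$-th unit vector, this gives
\[
\Bond(C_n)=\conv\Big(\{\bnull\}\cup\{\mathbf 1_e+\mathbf 1_f:e\neq f\}\Big).
\]
The two-hot $0/1$ vectors $\mathbf 1_e+\mathbf 1_f$ are precisely the vertices of the hypersimplex $\Delta=\{y\in[0,1]^{E}:\sum_{e}y_e=2\}$, so $\Bond(C_n)$ is the pyramid $\conv(\{\bnull\}\cup\Delta)$ with apex $\bnull$ over base $\Delta$; this is a genuine pyramid, since $\bnull$ does not lie in the affine hull $\{\sum_e x_e=2\}$ of $\Delta$, which already explains $\dim\Bond(C_n)=n$.

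Next I would prove that the listed inequalities cut out exactly this pyramid. Let $P$ be their solution set, and observe that $-x_e\le0$ reads $x_e\ge0$, while $x_e-\sum_{f\neq e}x_f\le0$ reads $x_e\le\tfrac12\sum_f x_f$. Validity ($\Bond(C_n)\subseteq P$) is immediate by evaluating the three families at $\bnull$ and at each $\mathbf 1_e+\mathbf 1_f$. For the reverse inclusion take $x\in P$ and set $s=\sum_e x_e\in[0,2]$. If $s=0$ then $x=\bnull\in\Bond(C_n)$; otherwise put $\lambda=s/2\in(0,1]$ and $y=x/\lambda$, so that $\sum_e y_e=2$ and $0\le y_e=x_e/\lambda\le (s/2)/\lambda=1$, i.e.\ $y\in\Delta=\conv\{\mathbf 1_e+\mathbf 1_f\}$. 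Hence $x=(1-\lambda)\bnull+\lambda y$ lies in $\Bond(C_n)$, proving $P=\Bond(C_n)$, which is the claimed complete description.

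Finally I would derive the facet statement from the standard facial structure of a pyramid: the facets of $\conv(\{a\}\cup Q)$ are the base $Q$ together with the pyramids $\conv(\{a\}\cup F)$ over the facets $F$ of $Q$. The base corresponds to $\sum_e x_e\le2$. The hypersimplex $\Delta$ has the facets $\{y_e=1\}\cap\Delta$ (a simplex of dimension $n-2$ for every $e$), which pull back to the homogeneous cycle inequalities $x_e-\sum_{f\neq e}x_f\le0$; since $\bnull$ and the base facet both lie on $\{x_e-\sum_{f\neq e}x_f=0\}$, these are facets for all $n\ge3$. It also has the facets $\{y_e=0\}\cap\Delta$, a hypersimplex on the remaining $n-1$ coordinates, which pull back to $-x_e\le0$.

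The only genuinely non-elementary input, and the step I expect to need the most care, is the vertex description $\Delta=\conv\{\mathbf 1_e+\mathbf 1_f\}$ of the hypersimplex, used both in the reverse inclusion and in locating the facets of $\Delta$. This is classical and has a short self-contained proof: a vertex of $\{y\in[0,1]^{E}:\sum_e y_e=2\}$ is determined by $n$ tight constraints, at least $n-1$ of which are box constraints $y_e\in\{0,1\}$, and then $\sum_e y_e=2\in\Z$ forces the last coordinate to be integral as well, so the vertex is $0/1$ with exactly two ones. The one case that must be flagged is $n=3$: there $\{y_e=0\}\cap\Delta$ degenerates to a single vertex rather than a facet, so $-x_e\le0$ is not facet-defining but merely valid and redundant (it is implied by summing $x_f-\sum_{g\neq f}x_g\le0$ over the two edges $f\neq e$), while the description is carried by $\sum_e x_e\le2$ together with the three cycle inequalities.
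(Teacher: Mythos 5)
Your proof is correct, but it takes a genuinely different route from the paper's. The paper leans on external and internal machinery: it imports the known homogeneous facets of $\CutP(C_n)$ from Barahona--Mahjoub and transfers them via \Cref{thm:Vertices}\emph{(iii)}, and it proves completeness by showing that any spurious vertex of the relaxation $Q$ would have to be $\relint(F)\cap\{\sum_e x_e=2\}$ for a $1$-dimensional face $F$ of $\CutP(C_n)$ containing $x^\emptyset$, which \Cref{prop:AdjacencyInCutP} rules out. You instead identify $\Bond(C_n)$ outright as the pyramid with apex $\bnull$ over the second hypersimplex $\Delta(n,2)$, prove completeness by an explicit scaling/convex-combination argument (where the key estimate $x_e\le\frac12\sum_f x_f$ comes exactly from the homogeneous cycle inequalities), and read off the facets from the standard facial structure of a pyramid. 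Your approach is more self-contained -- it needs neither the cut-polytope literature nor the adjacency characterization -- while the paper's approach has the virtue of reusing the same template that it later applies to wheel graphs in \Cref{thm:FacetDescriptionWn}, where no such clean product/pyramid structure is available.

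One point where your analysis is actually sharper than the statement being proved: for $n=3$ the face $\{y_e=0\}\cap\Delta(3,2)$ degenerates to a vertex, so $-x_e\le 0$ is valid and redundant but \emph{not} facet-defining for $\Bond(C_3)=\CutP(K_3)$; the theorem's blanket claim that all listed inequalities are facet-defining for every $n\ge 3$ is therefore slightly imprecise in that one case (the complete description itself remains correct, of course, and redundant valid inequalities do no harm there). This is consistent with the paper's own more careful phrasing in \Cref{thm:FacetDescriptionWn}, where $x_e\ge 0$ is only listed for edges not contained in a triangle. Flagging this explicitly, as you do, is the right call.
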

\begin{proof}	
	By \cite[Corollary 3.10]{OnTheCutPolytope}, the homogeneous inequalities above are the homogeneous facets of $\CutP(C_n)$. Thus, by \Cref{thm:Vertices} these are exactly the homogeneous facets of $\Bond(C_n)$.
	Let $\delta \subseteq E(C_n)$ be a cut. Then $\delta$ is a bond if and only if $\delta=\emptyset$ or $|\delta| =2$. Thus, the inequality $\sum_{e\in E(C_n)}x_e\leq 2$ defines a facet of $\Bond(C_n)$.
	
	Now, let $Q$ denote the polytope defined by the above inequalities. Clearly, we have $\Bond(C_n) \subseteq Q$.
	Assume that $Q$ has a vertex $v$ not resembling a bond. Since all cuts $\delta$ in $C_n$ satisfying $|\delta|=2$ are bonds, $v$ is given as $\{v\}=\relint(F) \cap \{\sum_{e \in E(C_n)}x_e=2\}$ for some $1$-dimensional face $F$ of $\CutP(C_n)$. Since all incidence vectors of nonempty bonds are contained in $\{\sum_{e \in E(C_n)}x_e=2\}$, we have $x^\emptyset \in F$. By \Cref{prop:AdjacencyInCutP}, the second vertex of $F$ is the incidence vector of some bond, contradicting the existence of~$v$.
\end{proof}

\section{Edge- and Interleaved Cycle Inequalities}\label{sec:interleaved_cycles_and_edges}
Finally, we discuss edge-inequalities and a natural generalization of non-interleaved cycle inequalities.
To tackle the latter, we consider the intersection of bonds and interleaved cycles in a given graph.

\begin{lem}\label{obs:GeneralizedCycleInequality}
	Let $G$ be a graph, $C\subseteq G$ be a cycle and $k \in \mathbb{N}$. Then $\sum_{e \in E(C)}x_e \leq 2k$ is valid for $\Bond(G)$ if and only if $G$ does not contain a minor $H$ of the following form:
	$H=T \dotcup T'$ where $T$ and $T'$ are disjoint trees, each on $k+1$ nodes that correspond to nodes in~$C$ alternating around~$C$.
	If $k$ is chosen minimally, the inequality is tight.
\end{lem}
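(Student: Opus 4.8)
The plan is to reduce the statement to a clean combinatorial dictionary between bonds that meet $C$ in many edges and pairs of vertex-disjoint connected subgraphs touching $C$ in alternating nodes. First I would record the elementary facts that every bond meets the cycle $C$ in an even number of edges and that $\sum_{e\in E(C)} x^{\delta(S)}_e = |\delta(S)\cap E(C)|$; hence $\sum_{e\in E(C)} x_e \le 2k$ fails to be valid precisely when some bond $\delta(S)$ satisfies $|\delta(S)\cap E(C)|\ge 2(k+1)$. I would then reformulate the minor condition: since any connected subgraph containing $k+1$ marked vertices contracts to a tree on exactly those vertices, $H=T\dotcup T'$ (with the $2(k+1)$ tree-nodes alternating around $C$) is a minor of $G$ if and only if there exist two \emph{vertex-disjoint} connected subgraphs of $G$, one containing $k+1$ nodes of $C$ and the other containing $k+1$ further nodes of $C$, such that these $2(k+1)$ nodes alternate in cyclic order along $C$.

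For the direction \qq{not valid $\Rightarrow$ minor}, I would take a bond $\delta(S)$ with $|\delta(S)\cap E(C)| = 2m \ge 2(k+1)$. The cut edges split $C$ into $2m$ arcs alternating between the $S$-side and the $(V\setminus S)$-side; choosing one node of $C$ from each of the first $2(k+1)$ arcs yields nodes $w_1,\dots,w_{2(k+1)}$ in cyclic order with the odd-indexed ones in $S$ and the even-indexed ones in $V\setminus S$. Then $G[S]$ and $G-S$ are the two required vertex-disjoint connected subgraphs, and contracting each to a tree on its chosen nodes of $C$ produces the minor $H$.

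The converse is the heart of the argument and mirrors the construction in \Cref{lem: InterleavedCycle_Valid}. Given disjoint connected subgraphs $T$ and $T'$ with alternating contact nodes $w_1,\dots,w_{2(k+1)}$ on $C$, I would set
$$S = V(T)\cup\{w\in V(G): w \text{ is separated from } T' \text{ in } G-V(T)\}.$$
The key verification is that $\delta(S)$ is a bond: $G[S]$ is connected because $V(T)$ is connected and, as $G$ is connected, every component of $G-V(T)$ lying entirely in $S$ attaches to $V(T)$; and $V\setminus S$ equals the unique component of $G-V(T)$ meeting the connected set $T'$, hence is itself connected. Since each odd $w_j$ lies in $S$ and each even $w_j$ lies in $V\setminus S$, every one of the $2(k+1)$ consecutive arcs of $C$ contains a cut edge, so $|\delta(S)\cap E(C)|\ge 2(k+1)$ and the inequality is violated. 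For the tightness claim I would observe that the minimal valid $k$ equals $\max_{\delta}\tfrac12|\delta\cap E(C)|$ over all bonds $\delta$; a bond attaining this maximum lies on the hyperplane $\{\sum_{e\in E(C)}x_e = 2k\}$, so the inequality is tight.

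I expect the main obstacle to be the careful verification that the set $S$ above yields a genuine bond---in particular arguing that $V\setminus S$ is a single connected component (which crucially uses connectedness of $T'$) and that the alternating placement of the $w_j$ forces a cut edge in each arc---together with pinning down the exact meaning of the minor so that the \qq{alternating around $C$} condition is matched faithfully on both sides of the equivalence.
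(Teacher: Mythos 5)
Your proposal is correct and follows essentially the same route as the paper: both directions reduce the validity of $\sum_{e\in E(C)}x_e\le 2k$ to the existence of a bond meeting $C$ in at least $2(k+1)$ edges, using the parity of cut edges on a cycle, the identification of the tree-minor with two vertex-disjoint connected subgraphs hitting $C$ in alternating nodes, and the construction of a bond by absorbing the components separated from $T'$ into the side of $T$; the tightness argument via parity is also the paper's. If anything, your verification that $V\setminus S$ is the single component of $G-V(T)$ containing $T'$ is spelled out more carefully than in the paper's sketch.
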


\begin{proof}
	Let $\mathcal T \subseteq G$ be a subgraph whose contraction gives $T\subseteq H$. By adding to $\mathcal T$ any components not connected to $C$ in $G \setminus \mathcal T$, $\delta=\delta_G(\mathcal T)$ becomes a bond with $|\delta \cap E(C)|=2(k+1)$.
	Conversely, if there is a bond $\delta=\delta(S) \subseteq E(G)$ with $|\delta \cap E(C)|= \ell > 2k$, $\ell$ even, this gives $\ell \geq 2(k+1)$ components in $C \setminus \delta$. Since $\delta$ is a bond, both $G[S]$ and $G-S$ contain trees as minors whose nodes correspond to these components.
	
	Now, let $k$ be minimal such that  $\sum_{e \in E(C)}x_e \leq 2k$ is valid for $\Bond(G)$. Then there exists some bond $\delta$ in $G$ such that 
	$2(k-1) <|\delta \cap E(C)|\leq 2k$. Tightness of the inequality follows since the number of cut edges in a cycle is always even. 
\end{proof} 

Indeed, such inequalities are facet-defining for some graphs. One class of such graphs are generalized \emph{Wagner graphs} $V_n$ ($n\in 2\mathbb N$) also known as circulants $C_{n}(1,\frac{n}{2})$: $V_n$ is obtained from the cycle $C_{n}$ on nodes $[n]$ by adding the edges $\{i,i+\frac{n}{2}\}$ for $1 \leq i \leq \frac{n}{2}$.
We call $C_{n}$ the \emph{outer cycle} of $V_n$.

\begin{thm}\label{thm:InterleavedCycleFacetVn}
	Let $n \geq 6$ and $C$ be the outer cycle of $V_n$. Then, the inequality $\sum_{e \in E(C)} x_e \leq 4$ defines a facet of $\Bond(V_n)$.
\end{thm}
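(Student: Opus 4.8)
The plan is to mirror the strategy of \Cref{thm:NonInterleavedCycleFacet}: first establish validity, then show that any facet-defining inequality $a^\mathsf{T}x\le b$ dominating the face $F=\Bond(V_n)\cap\{\sum_{e\in E(C)}x_e=4\}$ must satisfy $a_f=0$ on every diameter $f$ and $a_e=a_{e'}$ for all $e,e'\in E(C)$, whence $a$ is a positive multiple of the incidence vector of $E(C)$ and the two faces coincide. Throughout I write $m:=n/2$, label $V(C)=\{1,\dots,2m\}$ cyclically, and call the edges $\{i,i+m\}$ the \emph{diameters}; note that $V_n-E(C)$ is exactly the perfect matching of diameters, and that, viewed as chords of $C$, any two diameters cross.

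For validity I would invoke \Cref{obs:GeneralizedCycleInequality} with $k=2$: the inequality is then valid (and tight, as $k=2$ is minimal) if and only if no bond of $V_n$ meets $E(C)$ in at least $6$ edges. Minimality of $k$, hence tightness, is witnessed by $\delta(\{2,m+2\})$, which is a bond cutting exactly $4$ edges of $C$. To bound the number of cut edges, suppose $\delta(S)$ is a bond and let $\ell$ be the number of arcs into which $S$ (equivalently $\bar S$) splits $C$; I must show $\ell\le2$. If every diameter is monochromatic, then $S=\sigma(S)$ for the antipodal map $\sigma(i)=i+m$, so the arcs of $S$ fall into antipodal pairs; each pair is internally joined by diameters, but no edge of $G[S]$ runs between two different pairs (a diameter stays inside a pair and a cycle edge stays inside an arc or leaves $S$), so $G[S]$ has at least $\lceil\ell/2\rceil$ components and connectivity forces $\ell\le2$. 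The remaining case, where some diameter crosses the cut, is the main obstacle (discussed below).

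With validity in hand, pick a facet $a^\mathsf{T}x\le b$ with $F\subseteq\{a^\mathsf{T}x=b\}$. To kill the diameter coefficients, fix the diameter $\{1,m+1\}$ and put $S_0=\{2,m+2\}$. A direct check shows that $\delta(S_0)$, $\delta(S_0\cup\{1\})$, $\delta(S_0\cup\{m+1\})$ and $\delta(S_0\cup\{1,m+1\})$ are all bonds cutting exactly $4$ edges of $C$ (each side stays connected because the relevant arcs are joined by diameters such as $\{2,m+2\}$ or $\{1,m+1\}$), so all four lie on $F$ and satisfy $a^\mathsf{T}x=b$; \Cref{lem:BarahonaZeroKoeff}, applied with $pq=\{1,m+1\}$ and $S=S_0$, then gives $a_{\{1,m+1\}}=0$. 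Since $i\mapsto i+1$ is an automorphism of $V_n$ carrying diameters to diameters and tight bonds to tight bonds, re-running this for every rotated copy yields $a_f=0$ for \emph{all} diameters $f$. Next, to equalize the coefficients on $C$, take $S=\{2,3,m+2,m+3\}$; both $\delta(S)$ and $\delta(S\setminus\{2\})$ are bonds cutting exactly $4$ edges of $C$ (the two antipodal dominoes, resp.\ their complements, stay connected through diameters), so both lie on $F$. They differ only by toggling vertex $2$: on $C$ this swaps the cut edge $\{1,2\}$ for $\{2,3\}$, while the only other changed edge is the diameter $\{2,m+2\}$, whose coefficient is $0$. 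Hence $a_{\{1,2\}}=a_{\{2,3\}}$, and applying all rotations shows $a$ is constant on $E(C)$. Thus $a$ is a nonzero multiple of $\sum_{e\in E(C)}x_e$, the two faces agree, and $\sum_{e\in E(C)}x_e\le4$ is facet-defining.

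The hard part is the outstanding validity case, i.e.\ ruling out $\ell\ge3$ when the cut uses a crossing diameter. Here the antipodal geometry is decisive: a diameter with both ends in $S$ (an $SS$-diameter) joins two \emph{antipodal} $S$-arcs, so chaining two of them—needed to link three $S$-arcs—returns, after two half-turns, to a position adjacent to the start; this tends to squeeze one $\bar S$-arc into the region antipodal to an $S$-arc, depriving it of any $\bar S\bar S$-diameter and disconnecting $\bar S$. Turning this positional intuition into a clean, case-free argument (tracking the arcs and their antipodes around $C$) is the delicate step, and I expect the real work to lie there; the facet argument above is essentially mechanical once $\ell\le2$ is available. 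Finally, it is worth stressing that—contrary to the warning following \Cref{lem:BarahonaZeroKoeff}—the Barahona-type lemma \emph{is} usable here, precisely because $V_n$ is rich enough to realize the four bonds required at each diameter.
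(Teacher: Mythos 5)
Your facet argument (paragraphs two and three) is correct and is essentially the paper's proof: the paper likewise takes a facet $a^\mathsf{T}x\le b$ dominating the face, kills each diameter coefficient by applying \Cref{lem:BarahonaZeroKoeff} to the set consisting of a neighbour $v$ of $p$ and a neighbour $w$ of $q$ with $vw$ again a diameter (your $S_0=\{2,m+2\}$ for $pq=\{1,m+1\}$ is exactly this choice), and then equalizes the coefficients on $C$ by comparing two tight bonds differing in a single vertex (the paper uses $\delta(\{u,v\})$ versus $\delta(\{u,v,w\})$ with $u$ the antipode of $v$; your $\delta(\{2,3,m+2,m+3\})$ versus $\delta(\{3,m+2,m+3\})$ does the same job). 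Your closing remark that the Barahona-type lemma is genuinely usable here is also exactly the point of this proof.

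The genuine gap is the one you flag yourself: validity. The paper dismisses it with a citation of \Cref{obs:GeneralizedCycleInequality}, but that lemma only translates validity into the claim that no bond of $V_n$ meets $E(C)$ in six or more edges, which still has to be verified; your monochromatic-diameter case does not cover the situation where some diameter is cut, and that is where the content lies. The missing case does close in a few lines, and your ``antipodal geometry'' intuition is the right one. Suppose $\delta(S)$ is a bond whose sides split $C$ into $\ell\ge 3$ arcs each, say $S$-arcs $A_1,\dots,A_\ell$ and complementary arcs $B_1,\dots,B_\ell$. The only edges of $V_n$ outside $C$ are diameters, so connectivity of $G[S]$ forces the auxiliary graph on $\{A_1,\dots,A_\ell\}$ --- with $A_i$ adjacent to $A_j$ when some vertex of $A_i$ has its antipode in $A_j$ --- to be connected; since $\ell\ge 3$, some $A_j$ is adjacent to two distinct arcs $A_i$ and $A_k$. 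The antipodal map is a rotation of $C$, so the antipodal image of $A_j$ is again an arc; an arc meeting both $A_i$ and $A_k$ contains one of the two gaps of $C$ between them entirely, and every such gap contains a full complementary arc $B_r$. Then every diameter incident to $B_r$ ends in $A_j\subseteq S$, and the cycle-neighbours of the endpoints of $B_r$ lie in $S$ as well, so $B_r$ is a connected component of $G-S$; as $\ell\ge 2$ there are other complementary arcs, so $G-S$ is disconnected, a contradiction. Hence $\ell\le 2$ in all cases, validity follows, and the rest of your argument goes through as written.
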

\begin{proof}
	By \Cref{obs:GeneralizedCycleInequality}, the inequality  $\sum_{e \in E(C)} x_e \leq 4$ is valid for $\Bond(V_n)$. Thus, there is a facet-defining inequality $a^\mathsf{T}x \leq b$ of $\Bond(V_n)$ dominating it.
	We show $\{a^\mathsf{T}x=b\}=\{\sum_{e \in E(C)}x_e=4\}$ by proving $a_f =0$ for each $f \notin E(C)$ and $a_e=a_f$ for each $e,f \in E(C)$.

	First, we show that $a_{pq}=0$ for each $pq \notin E(C)$. 
	For this, let $v$ be a neighbor of $p$ and $w$ be a neighbor of $q$ such that $vw \in E(V_n)\setminus E(C)$ and set $S=\{v,w\}$.
	Then, $\delta(S)$, $\delta(S\cup \{p\})$, $\delta(S\cup \{q\})$, and $\delta(S\cup \{p,q\})$ are bonds satisfying $\sum_{e \in E(C)} x_e = 4$ and thus, $a^\mathsf{T}x =b$. Hence, \Cref{lem:BarahonaZeroKoeff} yields $a_{pq}=0$.
	
	It remains to show that $a_e=a_f$ for all $e,f \in E(C)$. It suffices to prove this for two incident edges $e,f \in E(C)$. Let $\{w\} = e \cap f$, $e=vw$ and $u \in V(V_n)$ the unique node with $uv \in E(V_n) \setminus E(C)$.
	Then $\delta(\{u,v\})$ and $\delta(\{u,v,w\})$ are bonds satisfying $\sum_{e \in E(C)} x_e = 4$ and thus, $a^\mathsf{T}x =b$.
	Since only edges in $C$ have non-zero coefficients, it follows that
	$0=a^\mathsf{T}x^{\delta(\{u,v\})}-a^\mathsf{T}x^{\delta(\{u,v,w\})}=a_f-a_e$.
\end{proof}

On the other hand:
\begin{ex}
	Consider $K_5$ and a $5$-cycle $C \subseteq K_5$. The inequality $\sum_{e \in E(C)} x_e \leq 4$ is valid and tight but not facet-defining.
\end{ex}

\begin{restatable}{question}{questone}
	Characterize interleaved cycles that induce facets. 
\end{restatable}

We close this section by  discussing inequalities associated to edges. By definition, the inequality $x_e \leq 1$ is always valid for $\Bond(G)$.
In the following, we show that although this inequality is not facet-defining in general, there is an infinite class of graphs where it is.

\begin{lem}
	Let $G=(V,E)$  be a connected graph and $e \in E$.
	If $e$ is contained in a non-interleaved cycle, $x_e \leq 1$ is not facet-defining for $\Bond(G)$.
\end{lem}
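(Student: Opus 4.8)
The plan is to show that the face $F=\Bond(G)\cap\{x_e=1\}$ is contained in a second, distinct supporting hyperplane, so that it has codimension at least two in $\Bond(G)$ and therefore cannot be a facet. The natural second hyperplane comes from the non-interleaved cycle: since $e$ lies on a non-interleaved cycle $C$, \Cref{lem: InterleavedCycle_Valid} guarantees that $\sum_{f\in E(C)}x_f\le 2$ is valid for $\Bond(G)$.

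First I would pin down how a bond through $e$ meets $C$. Let $\delta$ be any bond with $e\in\delta$. Every cut meets a cycle in an even number of edges, so $|\delta\cap E(C)|$ is even; the validity of the cycle inequality gives $|\delta\cap E(C)|\le 2$; and $e\in\delta\cap E(C)$ gives $|\delta\cap E(C)|\ge 1$. These three facts force $|\delta\cap E(C)|=2$, i.e.\ $\sum_{f\in E(C)}x^{\delta}_f=2$. By \Cref{thm:Vertices}, the vertices of $\Bond(G)$ are $\bnull$ together with the bond vectors $x^\delta$, and such a vertex lies in $F$ exactly when $e\in\delta$ (as $\bnull$ has $x_e=0$). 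Hence every vertex of $F$, and thus all of $F$, satisfies $\sum_{f\in E(C)}x_f=2$.

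It then remains to compare dimensions. The normal vector of $x_e\le 1$ is a unit vector, whereas the normal of the cycle inequality is the indicator of $E(C)$, which has at least three nonzero entries because a cycle in a simple graph has length at least three; hence these two normals are linearly independent. Consequently $F$ lies in the affine subspace $\{x_e=1\}\cap\{\sum_{f\in E(C)}x_f=2\}$ of codimension two. Since $\dim\Bond(G)=|E|$ by \Cref{thm:Vertices}, we obtain $\dim F\le |E|-2<|E|-1$, so $F$ is not a facet and $x_e\le 1$ is not facet-defining. I expect no serious obstacle here; the only points needing care are the parity-and-validity pinch that upgrades \qq{$e\in\delta$} to \qq{$|\delta\cap E(C)|=2$}, and the observation that the degenerate case where no bond contains $e$ (so $F=\emptyset$) is covered automatically by the same codimension-two containment.
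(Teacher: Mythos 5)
Your proof is correct, and it takes a genuinely different (though closely related) route from the paper's. The paper's argument is a one-line decomposition: it writes $2x_e\le 2$ as the sum of the non-interleaved cycle inequality $\sum_{f\in E(C)}x_f\le 2$ and the cut-polytope cycle inequality $x_e-\sum_{f\in E(C)\setminus\{e\}}x_f\le 0$ (the latter valid for $\Bond(G)$ because $\Bond(G)\subseteq\CutP(G)$), so $x_e\le 1$ is a proper nonnegative combination of two valid inequalities with independent normals and hence cannot be facet-defining. You instead show directly that the face $F=\Bond(G)\cap\{x_e=1\}$ lies in the second hyperplane $\{\sum_{f\in E(C)}x_f=2\}$, via the parity-plus-validity pinch (even, at least $1$, at most $2$, hence exactly $2$), and then conclude by a codimension count using $\dim\Bond(G)=|E|$ and $|E(C)|\ge 3$. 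The two arguments encode the same combinatorial fact---a bond containing $e$ meets $C$ in exactly two edges---but yours re-derives the needed consequence from first principles rather than importing the cut-polytope cycle inequality, so it is self-contained modulo \Cref{lem: InterleavedCycle_Valid} and \Cref{thm:Vertices}, at the cost of being longer; the paper's version is terser but leans on an external result and on the standard fact that a sum decomposition precludes facetness. Your explicit treatment of the degenerate case $F=\emptyset$ and of the linear independence of the two normals is careful and correct.
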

\begin{proof}
	Let $e$ be an edge contained in a non-interleaved cycle $C$. By \Cref{lem: InterleavedCycle_Valid} and \cite[Theorem 3.3]{OnTheCutPolytope}, the inequalities
	$$ \sum_{f \in E(C)}x_f \leq 2 \qquad \text{and} \qquad  x_e -\sum_{\substack{f \in E(C)\\f \neq e}} x_f \leq 0$$
	are valid for $\Bond(G)$. Summing these two inequalities, we obtain $2x_e \leq 2$.
\end{proof}

\begin{thm}\label{thm:EdgeIneqWagner}
	For any $n\geq 6$ and any $e\in E(V_n)$ the inequality $x_e \leq 1$ is facet-defining for $\Bond(V_n)$.
\end{thm}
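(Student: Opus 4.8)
The plan is to mimic the coefficient-killing strategy from the proofs of \Cref{thm:NonInterleavedCycleFacet} and \Cref{thm:InterleavedCycleFacetVn}. Since $x_e \leq 1$ is valid for $\Bond(V_n)$ by definition and tight (for any endpoint $i$ of $e$ the single-vertex cut $\delta(\{i\})$ is a bond containing $e$, as $V_n$ is $2$-connected), there is a facet-defining inequality $a^\mathsf{T}x \leq b$ with $\Bond(V_n)\cap\{x_e=1\}\subseteq\Bond(V_n)\cap\{a^\mathsf{T}x=b\}$. I would then show that $a$ is forced to be a positive multiple of the unit vector supported on $e$. Once $a_f=0$ for every $f\neq e$, the condition $a\neq\bnull$ forces $a_e\neq 0$, and comparing the hyperplane $\{a_e x_e=b\}$ with the dominated face $\{x_e=1\}$ forces $a_e>0$ and $b=a_e$, so that $\{a^\mathsf{T}x=b\}\cap\Bond(V_n)=\{x_e=1\}\cap\Bond(V_n)$ and $x_e\leq 1$ is facet-defining.

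The bulk of the work is proving $a_f=0$ for all $f\neq e$, split into two cases according to whether $f$ touches $e$. Write $e=uv$. For an edge $f=pq$ \emph{not} incident to $e$ (so $\{p,q\}\cap\{u,v\}=\emptyset$), I would invoke \Cref{lem:BarahonaZeroKoeff}: it suffices to produce a set $S\subseteq V\setminus\{p,q\}$ with $u\in S$, $v\notin S$ such that $\delta(S)$, $\delta(S\cup\{p\})$, $\delta(S\cup\{q\})$ and $\delta(S\cup\{p,q\})$ are all bonds. Since $u,v\notin\{p,q\}$, their sides are unchanged across the four sets, so $e$ is cut in all four; hence each of the four bond vectors satisfies $a^\mathsf{T}x=b$, and the lemma yields $a_{pq}=0$. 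The structural facts I would exploit are that contiguous arcs of the outer cycle $C$ always induce bonds (both $G[S]$ and $G-S$ are connected through $C$), and that the diameters of $V_n$ can keep both sides connected after moving the boundary vertices $p,q$ across the cut. Verifying that a suitable set $S$ (built from an arc of $C$ together with appropriate diameter endpoints) works for every relative position of $p,q$, and for both edge types of $f$, is the main obstacle; I expect to cut down the number of positions to check using the vertex-transitivity of $V_n$.

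For an edge $f$ \emph{incident} to $e$, say $f=ua_1$, \Cref{lem:BarahonaZeroKoeff} is unavailable, because toggling an endpoint of $e$ flips whether $e$ is cut, so one cannot keep $e$ in all four bonds. Here I would instead argue by a direct difference of two bonds. Using the connectivity of $V_n$ I would pick a bond $\delta(S)$ with $u,a_1\in S$ and $v\notin S$ whose modification $\delta(S\setminus\{a_1\})$ is again a bond; both contain $e$ and hence satisfy $a^\mathsf{T}x=b$. The vectors $x^{\delta(S)}$ and $x^{\delta(S\setminus\{a_1\})}$ differ only on the edges incident to $a_1$. Because $V_n$ is triangle-free for $n\geq 6$, the two edges at $a_1$ other than $f$ are not incident to $e$, so their coefficients already vanish by the first case. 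Thus $0=a^\mathsf{T}x^{\delta(S)}-a^\mathsf{T}x^{\delta(S\setminus\{a_1\})}=\pm a_{ua_1}$, giving $a_f=0$. Running this over the (at most four) edges incident to $e$ finishes the proof that $a_f=0$ for all $f\neq e$.

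Assembling the pieces gives $a=a_e\cdot\mathds{1}_{\{e\}}$ with $a_e>0$ and $b=a_e$, so the dominating facet is exactly $\{x_e=1\}\cap\Bond(V_n)$, as desired. The only genuinely delicate part is the arc construction of $S$ in the non-incident case; the incident case and the final bookkeeping are routine given the two edge orbits and the symmetry of $V_n$.
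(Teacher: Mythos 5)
Your proposal follows essentially the same route as the paper: dominate $x_e\leq 1$ by a facet-defining $a^\mathsf{T}x\leq b$, kill the coefficients of edges disjoint from $e$ via \Cref{lem:BarahonaZeroKoeff} applied to arc-plus-diameter sets $S$ that keep $e$ cut in all four configurations, and then kill the coefficients of the edges incident to $e$ by differencing two bonds of the form $\delta(\{u,a_1\})$ and $\delta(\{u\})$ that both contain $e$ --- which is exactly the paper's computation comparing $\delta(\{v,w_i\})$ with $\delta(\{v\})$, with your triangle-freeness remark implicit there. The only step you leave unexecuted, the explicit choice of $S$ for each of the four relative positions of $e$ and $f$ (each being an outer-cycle edge or a diameter), is precisely where the paper's proof spends its effort, and the sets you describe (an arc of the outer cycle augmented by suitable diameter endpoints) are indeed the ones it exhibits.
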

\begin{proof}
	We use the same strategy as in the proof of \Cref{thm:InterleavedCycleFacetVn}.
	Let $e\in E(V_n)$.
	Since $x_e \leq 1$ is valid for $\Bond(V_n)$ there exists a facet-defining inequality $a^\mathsf{T}x \leq b$ dominating it. We show $\{a^\mathsf{T}x =b\}=\{x_e=1\}$ by proving $a_f=0$ for each $f \in E(V_n)\setminus \{e\}$.
	
	First assume $f \cap e =\emptyset$. Labeling the vertices along the outer cycle by $[n]$, up to isomorphism it suffices to consider the following four cases (cf. \Cref{fig:ProofEdgeWagnerSetS}):
	If $e=\{1,n\}$ and $f=\{i,i+1\}$ for $2 \leq i \leq \frac{n}{2}$, we set $S^+=[\frac{n}{2}+1]$;
	if $e=\{1,n\}$ and $f=\{i,\frac{n}{2}+i\}$ for $2 \leq i \leq \frac{n}{2}-1$, we set $S^+=[i] \cup ([\frac n 2 +i]\setminus [\frac{n}{2}])$;
	if $e=\{\frac{n}{2},n\}$ and $f=\{i,i+1\}$ for $1 \leq i \leq \frac{n}{2}-2$ we set $S^+=[i+1] \cup ([n]\setminus [\frac{n}{2}+i])$;
	if $e=\{\frac{n}{2},n\}$ and $f=\{i,\frac{n}{2}+i\}$ for $1 \leq i \leq \frac{n}{2}-1$ we set $S^+=[i] \cup ([n]\setminus [\frac{n}{2}+i-1])$. It is straight forward to verify that for each of these sets, $S^+$, $S^+\setminus\{i\}$, $S^+\setminus\{j\}$, and $S^+\setminus\{i,j\}$ (where $j$ is the other
	end node of $f$) induce bonds satisfying $x_e=1$ and thus $a^\mathsf{T}x =b$. Hence, \Cref{lem:BarahonaZeroKoeff} yields $a_f=0$.
	
	It remains to show that each edge incident to $e$ has coefficient $0$.
	Depending on whether $e$ is contained in the outer cycle or not, we are in one of the situations sketched in \Cref{fig:ProofEdgeWagner}.
	In both cases, considering the notation as in the figure,
	all bond-vectors in the inequalities below satisfy the equalities $x_e=1$ and thus $a^\mathsf{T}x=b$. Since $a_h=0$ for each edge $h \in E(V_n)\setminus\{e,f_1,f_2,f_3,f_4\}$, we have
	\begin{alignat*}{4}
		b 	&=a^\mathsf{T}x^{\delta(\{v\})}			&&=a_e+ a_{f_1} +a_{f_3}, \hspace{2cm}	&&b=a^\mathsf{T}x^{\delta(w)}			&&=a_e+ a_{f_2}+a_{f_4},	\\
		b	&=a^\mathsf{T}x^{\delta(\{v,w_1\})}	&&=a_e+ a_{f_3},				 		&&b=a^\mathsf{T}x^{\delta(\{w,w_2\})}	&&=a_e+ a_{f_4},	\\
		b	&=a^\mathsf{T}x^{\delta(\{v,w_3\})}	&&=a_e+ a_{f_1},				 		&&b=a^\mathsf{T}x^{\delta(\{w,w_4\})}	&&=a_e+ a_{f_2} .	
	\end{alignat*}
	Hence, we have $a_{f_1}=a^\mathsf{T}x^{\delta(\{v\})}-a^\mathsf{T}x^{\delta(\{v,w_1\})}=b-b=0$ and analogously $a_{f_2}=a_{f_3}=a_{f_4}=0$.
\end{proof}
	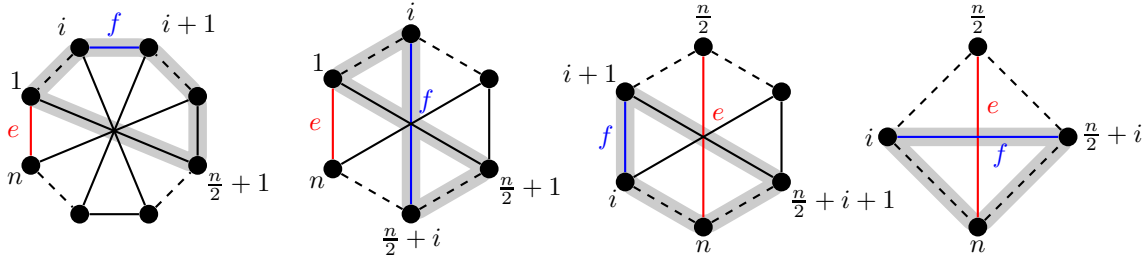
\begin{figure}
		\centering
		\begin{subfigure}{.24\textwidth}
			
			\begin{tikzpicture}[scale=.4]
				\node[circle,fill,scale=.4] (1) at  (1*360/8-360/16 :3){\footnotesize$v$};
				\node[circle,fill,scale=.4, label={[label distance=-1mm]above right:\footnotesize{$i+1$}}] (2) at  (2*360/8-360/16 :3){\footnotesize $v$};
				\node[circle,fill,scale=.4, label={[label distance=-1mm]above left:\footnotesize{$i$}}] (3) at  (3*360/8-360/16 :3){\footnotesize $v$};
				\node[circle,fill,scale=.4, label={[label distance=-2mm]above left:\footnotesize{$1$}}] (4) at  (4*360/8-360/16 :3){\footnotesize $v$};
				\node[circle,fill,scale=.4, label={[label distance=-2mm]below left:\footnotesize{$n$}}] (5) at  (5*360/8-360/16 :3){\footnotesize $v$};
				\node[circle,fill,scale=.4, label={[label distance=-1mm]below:\phantom{\footnotesize{$1$}}}] (6) at  (6*360/8-360/16 :3){\footnotesize $v$};
				\node[circle,fill,scale=.4] (7) at  (7*360/8-360/16 :3){\footnotesize $v$};
				\node[circle,fill,scale=.4,label={[label distance=-2mm]below right:\footnotesize{$\frac{n}{2}+1$}}] (8) at  (8*360/8-360/16 :3){\footnotesize $v$};
				\draw[dashed](1)--(2);
				\draw[blue] (2)--node[above]{\footnotesize $f$}(3);
				\draw[dashed](3)--(4);
				\draw[red] (4)--node[left]{\footnotesize $e$}(5);
				\draw[dashed](5)--(6);
				\draw(6)--(7);
				\draw[dashed](7)--(8);
				\draw(1)--(8);
				
				\draw(1)--(5);
				\draw(2)--(6);
				\draw(3)--(7);
				\draw(4)--(8);
				\begin{pgfonlayer}{bg}
					\draw[gray!40,line width=7pt,line cap=round,rounded corners] (4.center)--(8.center)--(1.center)--(2.center)--(3.center)--(4.center);
				\end{pgfonlayer}
				
			\end{tikzpicture}
		\end{subfigure}
		\hfill
		\begin{subfigure}{.24\textwidth}
			
			\begin{tikzpicture}[scale=.4]
				\node[circle,fill,scale=.4] (1) at  (1*360/6-360/12 :3){\footnotesize$v$};
				\node[circle,fill,scale=.4, label={[label distance=-1mm]above:\footnotesize{$i$}}] (2) at  (2*360/6-360/12 :3){\footnotesize $v$};
				\node[circle,fill,scale=.4, label={[label distance=-2mm]above left:\footnotesize{$1$}}] (3) at  (3*360/6-360/12 :3){\footnotesize $v$};
				\node[circle,fill,scale=.4, label={[label distance=-2mm]below left:\footnotesize{$n$}}] (4) at  (4*360/6-360/12 :3){\footnotesize $v$};
				\node[circle,fill,scale=.4, label={[label distance=-1mm]below:\footnotesize{$\frac{n}{2}+i$}}] (5) at  (5*360/6-360/12 :3){\footnotesize $v$};
				\node[circle,fill,scale=.4,label={[label distance=-2mm]below right:\footnotesize{$\frac{n}{2}+1$}}] (6) at  (6*360/6-360/12 :3){\footnotesize $v$};
				\draw[red] (3)--node[left]{\footnotesize $e$}(4);
				\draw[blue] (2)--node[xshift= 2mm, yshift=3mm]{\footnotesize $f$}(5);
				\draw (1)--(4);
				\draw (3)--(6);
				\draw [dashed] (1)--(2)--(3);
				\draw [dashed] (4)--(5)--(6);
				\draw (1)--(6);
				\begin{pgfonlayer}{bg}
					\draw[gray!40,line width=7pt,line cap=round,rounded corners] (2.center)--(3.center)--(6.center)--(5.center)--(2.center);
				\end{pgfonlayer}
			\end{tikzpicture}
		\end{subfigure}
		\hspace{-.5cm}
		\begin{subfigure}{.24\textwidth}
			
			\begin{tikzpicture}[scale=.4]
				\node[circle,fill,scale=.4] (1) at  (1*360/6-360/12 :3){\footnotesize$v$};
				\node[circle,fill,scale=.4, label={[label distance=-1mm]above:\footnotesize{$\frac{n}{2}$}}] (2) at  (2*360/6-360/12 :3){\footnotesize $v$};
				\node[circle,fill,scale=.4, label={[label distance=-2mm]above left:\footnotesize{$i+1$}}] (3) at  (3*360/6-360/12 :3){\footnotesize $v$};
				\node[circle,fill,scale=.4, label={[label distance=-2mm]below left:\footnotesize{$i$}}] (4) at  (4*360/6-360/12 :3){\footnotesize $v$};
				\node[circle,fill,scale=.4, label={[label distance=-1mm]below:\footnotesize{$n$}}] (5) at  (5*360/6-360/12 :3){\footnotesize $v$};
				\node[circle,fill,scale=.4,label={[label distance=-2mm]below right:\footnotesize{$\frac{n}{2}+i+1$}}] (6) at  (6*360/6-360/12 :3){\footnotesize $v$};
				\draw[blue] (3)--node[left]{\footnotesize $f$}(4);
				\draw[red] (2)--node[xshift= 2mm, yshift=3mm]{\footnotesize $e$}(5);
				\draw (1)--(4);
				\draw (3)--(6);
				\draw [dashed] (1)--(2)--(3);
				\draw [dashed] (4)--(5)--(6);
				\draw (1)--(6);
				\begin{pgfonlayer}{bg}
					\draw[gray!40,line width=7pt,line cap=round,rounded corners] (3.center)--(6.center)--(5.center)--(4.center)--(3.center);
				\end{pgfonlayer}
			\end{tikzpicture}
		\end{subfigure}
		\hfill
		\begin{subfigure}{.24\textwidth}
			
			\begin{tikzpicture}[scale=.4]
				\node[circle,fill,scale=.4, label={[label distance=-1mm]above:\footnotesize{$\frac{n}{2}$}}] (1) at  (1*360/4 :3){\footnotesize$v$};
				\node[circle,fill,scale=.4, label={[label distance=-1mm]left:\footnotesize{$i$}}] (2) at  (2*360/4 :3){\footnotesize $v$};
				\node[circle,fill,scale=.4, label={[label distance=-1mm]below:\footnotesize{$n$}}] (3) at  (3*360/4 :3){\footnotesize $v$};
				\node[circle,fill,scale=.4, label={[label distance=-1mm]right:\footnotesize{$\frac{n}{2}+i$}}] (4) at  (4*360/4 :3){\footnotesize $v$};
				\draw[red] (1)--node[xshift= 2mm, yshift=4mm]{\footnotesize $e$}(3);
				\draw[blue] (2)--node[xshift= 3mm, yshift=-2.5mm]{\footnotesize $f$}(4);
				\draw [dashed] (1)--(2)--(3)--(4)--(1);
				\begin{pgfonlayer}{bg}
					\draw[gray!40,line width=7pt,line cap=round,rounded corners] (2.center)--(3.center)--(4.center)--(2.center);
				\end{pgfonlayer}
			\end{tikzpicture}
		\end{subfigure}
		\caption{A visualization of the set $S^+$ from the proof of \Cref{thm:EdgeIneqWagner}. The subgraph $G[S^+]$ is highlighted.}
		\label{fig:ProofEdgeWagnerSetS}
	\end{figure}
	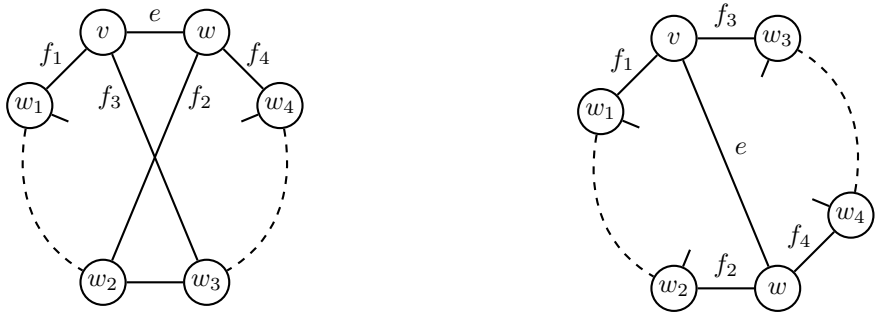
\begin{figure}
		\begin{subfigure}{.5\textwidth}
			\centering
			\begin{tikzpicture}[scale=.6]
				\node[draw,circle,minimum size=.6cm,inner sep=1.0pt] (v) at  (3*360/8-360/16 :3){\footnotesize $v$};
				\node[draw,circle,minimum size=.6cm,inner sep=1.0pt] (w) at  (2*360/8-360/16 :3){\footnotesize $w$};
				\node[draw,circle,minimum size=.6cm,inner sep=1.0pt] (w1) at (4*360/8-360/16 :3){\footnotesize $w_1$};
				\node[draw,circle,minimum size=.6cm,inner sep=1.0pt] (w2) at (6*360/8-360/16 :3){\footnotesize $w_2$};
				\node[draw,circle,minimum size=.6cm,inner sep=1.0pt] (w3) at (7*360/8-360/16 :3){\footnotesize $w_3$};
				\node[draw,circle,minimum size=.6cm,inner sep=1.0pt] (w4) at (1*360/8-360/16 :3){\footnotesize $w_4$};
				\draw (v)--node[above]{\footnotesize $e$}(w);
				\draw (v)--node[xshift=-2mm , yshift=2mm]{\footnotesize$f_1$}(w1);
				\draw (w)-- node[xshift=6mm , yshift=8mm]{\footnotesize$f_2$}(w2);
				\draw (v)-- node[xshift=-6mm , yshift=8mm]{\footnotesize$f_3$}(w3);
				\draw (w) --node[xshift=2mm , yshift=2mm]{\footnotesize$f_4$} (w4);
				\draw (w1)edge [out=-100,in=150,dashed](w2);
				\draw (w4)edge [out=-80,in=30,dashed](w3);
				\draw (w2)--(w3);
				\node (x1) at (4*360/8-360/16 :1.8) {};
				\draw (w1)--(x1);
				\node (x4) at (1*360/8-360/16 :1.8) {};
				\draw (w4)--(x4);
			\end{tikzpicture}
		\end{subfigure}
		\begin{subfigure}{.5\textwidth}
			\centering
			\begin{tikzpicture}[scale=.6]
				\node[draw,circle,minimum size=.6cm,inner sep=1.0pt] (v) at  (3*360/8-360/16 :3){\footnotesize $v$};
				\node[draw,circle,minimum size=.6cm,inner sep=1.0pt] (w3) at  (2*360/8-360/16 :3){\footnotesize $w_3$};
				\node[draw,circle,minimum size=.6cm,inner sep=1.0pt] (w1) at  (4*360/8-360/16 :3){\footnotesize $w_1$};
				\draw (v)--node[above]{\footnotesize $f_3$}(w3);
				\draw (v)--node[xshift=-2mm , yshift=2mm]{\footnotesize$f_1$}(w1);
				\node[draw,circle,minimum size=.6cm,inner sep=1.0pt] (w) at  (7*360/8-360/16 :3){\footnotesize $w$};
				\node[draw,circle,minimum size=.6cm,inner sep=1.0pt] (w2) at  (6*360/8-360/16 :3){\footnotesize $w_2$};
				\node[draw,circle,minimum size=.6cm,inner sep=1.0pt] (w4) at  (8*360/8-360/16 :3){\footnotesize $w_4$};
				\draw (w)--node[above]{\footnotesize $f_2$}(w2);
				\draw (w)--node[xshift=-2mm , yshift=2mm]{\footnotesize$f_4$}(w4);
				
				\draw (v)-- node[xshift=2mm , yshift=2mm]{\footnotesize$e$}(w);
				\draw (w1)edge [out=-100,in=150,dashed](w2);
				\draw (w4)edge [out=80,in=-30,dashed](w3);
				\node (x2) at  (6*360/8-360/16 :1.8){};
				\node (x4) at  (8*360/8-360/16 :1.8){};
				\node (x3) at  (2*360/8-360/16 :1.8){};
				\node (x1) at  (4*360/8-360/16 :1.8){};
				\draw (x1) -- (w1);
				\draw (x2) -- (w2);
				\draw (x3) -- (w3);
				\draw (x4) -- (w4);
				
			\end{tikzpicture}
		\end{subfigure}
		\caption{Sketches of $V_n$ with notations from the proof of \Cref{thm:EdgeIneqWagner}}
		\label{fig:ProofEdgeWagner}
	\end{figure}

Given this result and noticing that each cycle in $V_n$ is interleaved gives rise to the following question:

\begin{restatable}{question}{questtwo}
	Let $G$ be a $3$-connected graph. Does $x_e \leq 1$ define a facet of $\Bond(G)$ for each $e$ that is not contained in a non-interleaved cycle?
\end{restatable}

\section{$\boldsymbol{(K_5-e)}$-Minor Free Graphs}\label{sec:K5-e_Minor}

The focus of this section lies on $(K_5-e)$-minor free graphs.
We prove a linear description of bond polytopes for planar $3$-connected such graphs. Moreover, we present a linear time algorithm for $\MB$ on arbitrary $(K_5-e)$-minor free graphs. We start with a characterization of these graphs.

The \emph{wheel graph} $W_n$ on $n$-nodes is obtained from the cycle $C_n$ by adding a new node $c$ adjacent to each node of $C_n$. We call $c$ the \emph{center node} of $W_n$ and $C_n \subseteq W_n$ the \emph{rim}.
Moreover, we denote the graph shown in \Cref{fig:prism} by $\prism$.

\begin{prop}\cite{Decomposition_K5-e-Free}\label{prop:Wagner_K5-e-free}
	Each maximal $(K_5-e)$-minor free graph $G$ can be decomposed as $G=G_1 \oplus_2 \dots \oplus_2 G_\ell$ where each
	$G_i$ is isomorphic to a wheel graph, $\prism$, $K_3$, or $K_{3,3}$.
\end{prop}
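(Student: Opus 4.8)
The plan is to reduce the statement to a characterization of the \emph{$3$-connected} edge-maximal $(K_5-e)$-minor free graphs and then to glue these pieces back together along $2$-sums. First I would observe that $G$ may be assumed $2$-connected: a cut vertex permits adding an edge across it without introducing a $3$-connected minor, contradicting maximality. The starting point is then the decomposition of a $2$-connected graph into its $3$-connected components, i.e.\ the skeletons of the SPR-tree discussed in \Cref{sec:Reduction}, which are joined by (possibly non-strict) $2$-sums. The feature that makes this reduction work is that $K_5-e$ is itself $3$-connected: a $2$-sum along a pair $\{v,w\}$ creates a $2$-separation, and a $3$-connected minor cannot be routed across such a separation in both directions. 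Hence $G_1 \oplus_2 G_2$ contains a $K_5-e$-minor if and only if one of the summands (with its virtual edge present) already does. This yields the key lemma that an iterated $2$-sum is $(K_5-e)$-minor free exactly when each skeleton is, reducing the problem to the $3$-connected skeletons together with the trivial cyclic and bundle skeletons.

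Next I would exploit \emph{edge-maximality}. If $G$ is edge-maximal $(K_5-e)$-minor free, no missing edge can be added without creating the forbidden minor. Applied across a $2$-separation $\{v,w\}$, this forces the virtual edge $vw$ to be present as a genuine edge, so all the $2$-sums are \emph{strict}, $G = G_1 \oplus_2 \cdots \oplus_2 G_\ell$, and it forces each $3$-connected skeleton to be itself edge-maximal among $3$-connected $(K_5-e)$-minor free graphs. Under maximality the cyclic and bundle skeletons collapse to the triangle $K_3$, which accounts for that entry in the list. Everything therefore comes down to identifying the edge-maximal $3$-connected members of the class.

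The heart of the argument is the claim that every edge-maximal $3$-connected $(K_5-e)$-minor free graph is a wheel, the \prism, or $K_{3,3}$. The most useful reformulation here is that $K_5-e$ is the $3$-clique-sum of two copies of $K_4$ glued along a common triangle; equivalently, a graph contains a $K_5-e$-minor precisely when it contains two vertex sets that each \qq{complete to a $K_4$ on a shared triangle}. To prove completeness of the list I would run an induction based on Tutte's wheel theorem, which states that every $3$-connected graph other than a wheel arises from a smaller $3$-connected graph by adding an edge or splitting a vertex; tracking these operations, one shows that any $3$-connected graph that is neither a wheel, the \prism, nor $K_{3,3}$ admits one of the two disjoint $K_4$-completions and hence a $K_5-e$-minor, with the small graphs handled as base cases. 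Conversely, each candidate is verified to be $(K_5-e)$-minor free: for wheels the four rim branch-sets can only form a cycle around the hub, giving at most a $K_4-e$ rather than the $K_4$ the forbidden minor would need there, while for the \prism\ and for $K_{3,3}$ the densest $5$-vertex minor has only eight edges, one short of the nine edges of $K_5-e$ (and edge-maximality of, e.g., $W_4$ is seen directly, since adding any rim chord already produces $K_5-e$).

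The main obstacle is precisely this last characterization of the $3$-connected members. Establishing that the candidate list is \emph{complete}---that every $3$-connected graph outside it forces a $K_5-e$-minor---requires a careful and somewhat lengthy minor-extraction argument, or equivalently a disciplined induction along Tutte's wheel theorem with several base cases, and it is where essentially all of the combinatorial work lies. By contrast, the $2$-sum reduction, the preservation of minor-freeness through $2$-sums, and the maximality bookkeeping that turns the components strict are comparatively routine once the $3$-connected picture is in place.
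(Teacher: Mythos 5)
The paper offers no proof of this proposition at all: it is quoted directly from \cite{Decomposition_K5-e-Free} and used as a black box, so there is no in-paper argument to compare yours against. Judged on its own terms, your reduction framework is the standard one and is sound. Since $K_5-e$ is $3$-connected, a $2$-sum has a $(K_5-e)$-minor if and only if one summand together with its virtual edge does; edge-maximality rules out cut vertices, forces every $2$-separation to carry its connecting edge (so all $2$-sums are strict), collapses cycle skeletons to triangles, and passes edge-maximality down to the $3$-connected skeletons. Your verifications in the easy direction are also essentially correct: wheels, $\prism$, and $K_{3,3}$ admit no $(K_5-e)$-minor by the edge counts you give, and each acquires one upon adding any edge. (One imprecision: the four rim branch sets of a wheel span at most a $C_4$, i.e.\ four edges, while a $(K_5-e)$-minor needs at least the five edges of $K_4-e$ among them; the comparison \qq{at most $K_4-e$ rather than $K_4$} is not quite the right one, though the count still closes the case.)

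The genuine gap is exactly where you locate it, and it is not a small one: the assertion that \emph{every} edge-maximal $3$-connected $(K_5-e)$-minor-free graph is a wheel, $\prism$, or $K_{3,3}$ \emph{is} the proposition, once the routine $2$-sum bookkeeping is done. For this you give only a plan---induction along Tutte's wheel theorem, extracting two $K_4$'s glued along a triangle from any graph outside the list---without executing the case analysis of which edge additions and vertex splittings avoid the forbidden minor. Nothing in the plan is wrong, but nothing in it establishes completeness of the list either, so as written the proposal reduces the proposition to its own hardest step rather than proving it. Given that the paper itself treats the statement as an external result, the honest options are to cite \cite{Decomposition_K5-e-Free} as the paper does, or to carry out the wheel-theorem induction in full; the latter is a genuine piece of work, not a corollary of the scaffolding you set up.
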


As a consequence , it follows that each $3$-connected $(K_5-e)$-minor free graph is a wheel graph, $\prism$, $K_3$, or $K_{3,3}$. We provide a complete facet description for all planar such graphs (i.e., all but $K_{3,3}$).

\begin{figure}
	\centering
	\begin{tikzpicture}
		\node[circle, fill, scale=.5] (1) at (0,0) {};
		\node[circle, fill, scale=.5] (2) at (0,2) {};
		\node[circle, fill, scale=.5] (3) at (1,1) {};
		\draw (1)--(2)--(3)--(1);
		\node[circle, fill, scale=.5] (a) at (4,0) {};
		\node[circle, fill, scale=.5] (b) at (4,2) {};
		\node[circle, fill, scale=.5] (c) at (3,1) {};
		\draw (a)--(b)--(c)--(a);
		\draw (1)--(a);
		\draw (2)--(b);
		\draw (3)--(c);
	\end{tikzpicture}
	\caption{$\prism$}\label{fig:prism}
\end{figure}

\begin{thm}\label{thm:FacetDescriptionWn}
	Let $G \neq K_{3,3}$ be a $3$-connected $(K_5-e)$-minor-free graph. Then $\Bond(G)$ is completely determined by the following facet-defining inequalities:
	\begin{align*}
		x_e &\geq 0 									\qquad \text{for each edge $e$ that is not contained in a triangle,}\\
		x_e - \sum_{f \in E(C)\setminus \{e\}}x_f &\leq 0	\qquad \text{for each induced cycle $C$ and $e\in E(C)$,}\\
		\sum_{e \in E(C)}x_e &\leq 2 						\qquad \text{for each non-interleaved cycle } C.
	\end{align*}
\end{thm}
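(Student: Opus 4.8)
The overall plan is to combine the structural characterization of the graph class with the homogeneous-facet transfer between $\CutP(G)$ and $\Bond(G)$, and then to reduce completeness to the three elementary building blocks, of which the wheels are the only genuinely infinite—and hence the hard—family.

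First I would apply \Cref{prop:Wagner_K5-e-free}: since $G$ is $3$-connected, $(K_5-e)$-minor-free, and not $K_{3,3}$, it is isomorphic to $K_3$, $\prism$, or a wheel $W_n$. I would also record that $(K_5-e)$-minor-freeness implies $K_5$-minor-freeness (as $K_5-e$ is a minor of $K_5$), so that the cycle-inequality description of $\CutP(G)$ for $K_5$-minor-free graphs is at our disposal. The facet-defining part is then essentially a bookkeeping of known results: by \Cref{thm:Vertices}(iii) a homogeneous inequality defines a facet of $\Bond(G)$ iff it does so for $\CutP(G)$, and for $K_5$-minor-free graphs the homogeneous facets of $\CutP(G)$ are exactly the inequalities $-x_e\le 0$ for edges $e$ in no triangle together with the homogeneous cycle inequalities of induced cycles \cite{OnTheCutPolytope}; the non-interleaved cycle inequalities are facet-defining by \Cref{thm:NonInterleavedCycleFacet}.

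For completeness I would use that a polytope is the intersection of its facet-defining halfspaces, so it suffices to prove that every facet of $\Bond(G)$ is among those listed. Writing a facet as $a^\mathsf{T}x\le b$, we have $b\ge 0$ since $\bnull$ is a bond, and the case $b=0$ is already settled: by \Cref{thm:Vertices}(iii) the homogeneous facets of $\Bond(G)$ coincide with those of $\CutP(G)$, which for our graphs are exactly the listed homogeneous inequalities. Hence the task reduces to showing that every non-homogeneous facet ($b>0$) is a non-interleaved cycle inequality $\sum_{e\in E(C)}x_e\le 2$. For $G=K_3$ this follows directly from \Cref{thm:FacetDescriptionCn} (here $\Bond(K_3)=\CutP(K_3)$, as all cuts are bonds), and for the single graph $G=\prism$ it is a finite computation that can be carried out, e.g., with \cite{normaliz}.

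The main obstacle is the wheel family $W_n$, where the argument must be uniform in $n$. Here I would first make the combinatorics explicit: the bonds of $W_n$ are the central bond $\delta(\{c\})$ of all spokes and the arc bonds $\delta(A)$ for consecutive rim arcs $A$, and—using \Cref{lem:InterleavedCycle_Indced}, that non-interleaved cycles in a $3$-connected graph are induced—the only non-interleaved cycles are the rim $C_n$ and the $n$ triangles $cv_iv_{i+1}$, so the listed inequalities form an explicit finite family. It then remains to show that an arbitrary non-homogeneous facet $a^\mathsf{T}x\le b$ is forced to be one of the $\sum_{e\in E(C)}x_e\le 2$. I would establish this by the coefficient-pinning strategy used in the proofs of \Cref{thm:NonInterleavedCycleFacet} and \Cref{thm:EdgeIneqWagner}: repeatedly applying \Cref{lem:BarahonaZeroKoeff} to suitable quadruples of tight arc bonds to force $a_f=0$ for every edge $f$ off a single non-interleaved cycle $C$, and then comparing the values of $a^\mathsf{T}x$ on bonds that differ in a single rim vertex to force the surviving coefficients to be equal and $b=2$. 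The delicate point—and where I expect the real work—is guaranteeing, for every $n$ and every relevant edge, the existence of the quadruple of tight arc bonds required by \Cref{lem:BarahonaZeroKoeff}, in analogy with the explicit constructions in \Cref{lem:constuct_set_S} and in the Wagner-graph proofs.
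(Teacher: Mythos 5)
Your reduction to $K_3$, $\prism$, and the wheels via \Cref{prop:Wagner_K5-e-free}, the treatment of the homogeneous facets through \Cref{thm:Vertices}, and the facet-defining part of the statement all match the paper and are fine. The problem is the completeness argument for $W_n$, which is the actual content of the theorem and which you explicitly leave as ``the real work.'' Beyond being incomplete, the strategy you sketch has a directional difficulty: \Cref{lem:BarahonaZeroKoeff} lets you conclude $a_{pq}=0$ only after you have exhibited four bonds of the prescribed form that are \emph{tight for the facet $a^\mathsf{T}x\le b$ under consideration}. In \Cref{thm:NonInterleavedCycleFacet,thm:EdgeIneqWagner} this works because one starts from a concrete valid inequality whose tight set is known explicitly, and pins down a facet dominating it. Here you would start from an \emph{arbitrary} non-homogeneous facet of $\Bond(W_n)$, for which you do not know which arc bonds are tight, nor which non-interleaved cycle $C$ (the rim or one of the $n$ triangles) the facet is supposed to collapse onto. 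Making this work would require a separate structural analysis of the possible tight sets of arc bonds, which is precisely the part you have not supplied; as written, the wheel case is a plan rather than a proof.

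For contrast, the paper avoids classifying facets altogether. It lets $Q$ be the polytope cut out by the listed inequalities, observes that $Q=\CutP(W_n)\cap\{\sum_{e\in R}x_e\le 2\}$ (using the known complete description of $\CutP(W_n)$ for $K_5$-minor-free graphs), and notes that a cut $\delta$ of $W_n$ is a bond iff $\delta=\emptyset$, $\delta=\delta(c)$, or $|\delta\cap R|=2$. Hence any vertex of $Q$ that is not a bond vector would have to arise as $\relint(F)\cap\{\sum_{e\in R}x_e=2\}$ for a $1$-dimensional face $F$ of $\CutP(W_n)$ with one endpoint in $\{x^\emptyset,x^{\delta(c)}\}$; by \Cref{prop:AdjacencyInCutP} those two vertices are adjacent only to bond vectors, a contradiction. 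If you want to salvage your route, you would either need to carry out the tight-set analysis for an arbitrary non-homogeneous facet of $\Bond(W_n)$, or switch to this vertex-based argument.
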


\begin{proof}
	For $K_3$, the claim follows directly from the fact that $\CutP(K_3)=\Bond(K_3)$. The description of $\Bond(\prism)$ can be checked by computation. Thus, it remains to prove that $\Bond(W_n)$ is completely defined by the inequalities
	
	\begin{align*}
		x_e -x_f -x_g &\leq 0 							\qquad \text{for each triangle $\{e,f,g\}$ in $W_n$,}			\\
		x_e - \sum_{f \in R\setminus \{e\}}x_f &\leq 0	\qquad \text{for each $e\in R$,}	\\
		x_e +x_f + x_g &\leq 2 							\qquad \text{for each triangle $\{e,f,g\}$ in $W_n$,}			\\
		\sum_{e \in R}x_e &\leq 2,
	\end{align*}
	where $R\subseteq E(W_n)$ denotes the set of rim edges of $W_n$.
	
	By \cite[Corollary 3.10]{OnTheCutPolytope}, the homogeneous inequalities above are the homogeneous facets of $\CutP(W_n)$ and \Cref{thm:Vertices} implies that these are precisely the homogeneous facets of $\Bond(W_n)$.
	
	Let $c$ denote the center node of $W_n$ and $\delta \subseteq E$ be a cut.
	Then, $\delta$ is a bond if and only if $\delta=\emptyset$, $\delta=\delta(c)$ or $|\delta \cap R| =2$.
	
	Let $Q$ denote the polytope given by the above inequalities. Clearly we have $\Bond(W_n) \subseteq Q$. We prove equality of the two polytopes by showing that each vertex of $Q$ is the incidence vector of some bond.

	Note that by \cite[Corollary 3.10]{OnTheCutPolytope}, $Q=\CutP(W_n)\cap \{ \sum_{e \in R} x_e \leq 2 \}$ and the vertices of $\CutP(W_n)$ contained in $\{ \sum_{e \in R} x_e \leq 2 \}$ are exactly the incidence vectors of bonds. Now assume, $Q$ has an additional vertex. Then, this is given as $\relint(F) \cap \{ \sum_{e \in R} x_e = 2 \}$ where $F$ is a $1$-dimensional face of $\CutP(W_n)$. Such a face has to contain one of $x^\emptyset$ and $x^{\delta(c)}$. But by \Cref{prop:AdjacencyInCutP}, in $\CutP(W_n)$ these are only adjacent to incidence vectors of bonds which yields a contradiction.
\end{proof}

Given the previous results, it seems natural to ask, whether the bond polytope of $3$-connected planar graphs is completely described by inequalities associated to cycles and edges. Unfortunately the answer to this question is negative, since already $\Bond(K_5-e)$ has a facet that does not belong to the mentioned class. 

\begin{ex}\label{obs:K5-e}
	Consider $K_5-e$ with the edge labeling as in \Cref{fig:K5-e}. The inequality $x_1+x_2 +x_4+x_5 + x_7 - x_8 - x_9 \leq 2 $ defines a facet of $\Bond(K_5-e)$.~\hfill$\blacktriangleleft$
\end{ex}

\begin{figure}
	\centering
	\begin{tikzpicture}
		\node[circle,fill, scale=0.5] (1) at (360/3 * 1:2cm) {};
		\node[circle,fill, scale=0.5] (2) at (360/3 * 2:2cm) {};
		\node[circle,fill, scale=0.5] (3) at (360/3 * 3:2cm) {};
		\node[circle,fill, scale=0.5] (a) at (0,0) {};
		\draw[red] (a)--node[below, yshift=-1mm]{\small $1$}(1);
		\draw[red] (a)--node[above, yshift=1mm]{\small $2$}(2);
		\draw (a)--node[above left,, yshift=-1mm]{\small $3$}(3);
		\draw[red] (1)--node[left, xshift=1mm]{\small $7$}(2);
		\draw[red] (2)--node[right, yshift=-1mm]{\small $9$}(3);
		\draw[red] (3)--node[right, yshift=1mm]{\small $8$}(1);
		\node[circle,fill, scale=0.5] (b) at (-4,0) {};
		\draw[red] (b)--node[above]{\small $4$} (1);
		\draw[red] (b)-- node[below]{\small $5$}(2);
		\draw (b)edge[in=-90, out=-90, distance =3cm] node[below]{\small $6$}(3);
		
	\end{tikzpicture}
	\caption{$K_5-e$ on the edge set $E=\{1,\dots 9\}$. Red edges are those of the support graph of the inequality from \Cref{obs:K5-e}.}
	\label{fig:K5-e}
\end{figure}
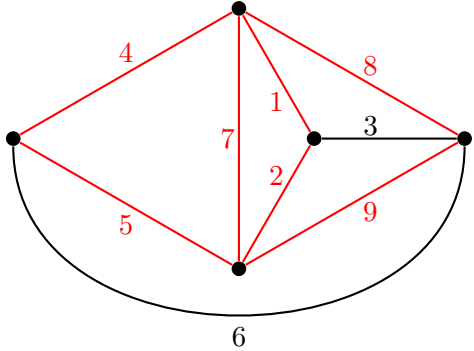

We close this section by presenting a linear time algorithm for $\MB$ on $(K_5-e)$-minor free graphs. For details on tree-width and parameterized algorithms see, e.g., \cite{downey2013fundamentals}.

\begin{prop}\cite{computing_largest_bond, MaxBondMaxConnectedCut}\label{prop:MaxBond_bounded_treewidth}
	Given a nice tree decomposition of $G$ with width $k$, $\MB$ can be solved on $G$ in time $2^{\mathcal{O}(k \log(k))}\times |V(G)|$.
\end{prop}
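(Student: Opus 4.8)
The plan is to give a standard dynamic program over the given nice tree decomposition in which the connectivity requirement on \emph{both} sides of the cut is encoded by partitions of the bag vertices; after a linear-time normalization we may assume the decomposition uses the usual leaf, introduce-vertex, introduce-edge, forget, and join nodes, of which there are $\mathcal{O}(k\,|V(G)|)$. For a node $t$ with bag $X_t$, write $G_t$ for the subgraph induced by the vertices and edges introduced in the subtree rooted at $t$. I index the table by a \emph{signature} $(\sigma,\mathcal P_1,\mathcal P_2)$, where $\sigma\colon X_t\to\{1,2\}$ records on which side of the prospective cut each bag vertex lies, and $\mathcal P_i$ is a partition of $\sigma^{-1}(i)$. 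The stored value $c[t,\sigma,\mathcal P_1,\mathcal P_2]$ is the maximum weight of cut edges over all $S\subseteq V(G_t)$ with $S\cap X_t=\sigma^{-1}(1)$ for which $\mathcal P_1$ is exactly the partition of $\sigma^{-1}(1)$ induced by the components of $G_t[S]$, $\mathcal P_2$ is the one induced by the components of $G_t-S$, and crucially \emph{every} component of $G_t[S]$ and of $G_t-S$ meets $X_t$. Since $|X_t|\le k+1$, there are at most $2^{k+1}$ choices of $\sigma$ and at most $B_{k+1}$ choices of each partition, where $B_{k+1}$ is the Bell number; as $\log B_{k+1}=\mathcal{O}(k\log k)$, each table has size $2^{\mathcal{O}(k\log k)}$.

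The transitions are mostly mechanical. At an introduce-vertex node the new vertex becomes a singleton block on the chosen side; at an introduce-edge node $uv$ one adds the weight $c_{uv}$ when $\sigma(u)\neq\sigma(v)$ and otherwise merges the blocks of $u$ and $v$ in $\mathcal P_{\sigma(u)}$; at a join node one combines child signatures sharing the same $\sigma$ by replacing, on each side, the two partitions by their join in the partition lattice and adding the two stored values (no edge is double-counted, since each edge is introduced exactly once). The one delicate step is the forget node for a vertex $v$: a child signature is \emph{discarded} whenever $v$ is the sole element of its block, because forgetting it would strand a component away from the bag and break the invariant that every component meets $X_t$; otherwise $v$ is simply removed from its block. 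To let each side close off into a single component exactly once at the very end, I keep a $0/1$ ``closed'' flag per side, set it when a component is legitimately completed, forbid setting it twice, and read the optimum at the root from signatures with both flags set and no active block remaining; comparing against the empty bond of weight $0$ completes the accounting.

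Correctness follows by induction on the tree directly from the definition of $c[\,\cdot\,]$, the routine point being that each partial solution over $G_t$ projects to a unique signature and each signature is realizable. Every one of the $\mathcal{O}(k\,|V(G)|)$ nodes is processed in time polynomial in the table size, i.e.\ $2^{\mathcal{O}(k\log k)}$, the only superlinear-in-the-table blow-up being the pairing at join nodes, which is absorbed into the exponent; the factor $\mathcal{O}(k)$ from the number of nodes is likewise absorbed, giving the total bound $2^{\mathcal{O}(k\log k)}\times|V(G)|$. The main obstacle is exactly the connectivity bookkeeping at the forget and join nodes---maintaining the invariant ``every component meets the current bag'' and ensuring that each side is closed into a single connected component precisely once---whereas the weight updates themselves are straightforward.
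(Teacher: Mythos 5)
The paper does not prove this proposition at all; it is imported verbatim as a known result from the cited works, which establish it by exactly the kind of dynamic program you describe. Your argument is the standard one and is sound: the signature space of a colouring of the bag plus a partition per side has size $2^{k+1}B_{k+1}=2^{\mathcal{O}(k\log k)}$, the transitions you give are the usual ones, and your handling of the two genuinely delicate points---discarding states whose forgotten vertex strands a component away from the bag, and the per-side ``closed'' flag that permits each side to detach from the bag exactly once---is what makes the connectivity requirement on both sides go through. So this is a correct reconstruction of the cited proof rather than a different route.
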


\begin{thm}
	Given a $(K_5-e)$-minor free graph $G=(V,E)$ with $|V|=n$, \MB  can be solved on $G$ in time $\mathcal{O}(n)$.
\end{thm}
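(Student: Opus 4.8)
The plan is to combine the reduction of \Cref{thm:Reduction_3connectivity_Components} with the classification of $3$-connected $(K_5-e)$-minor free graphs. First I would invoke \Cref{thm:Reduction_3connectivity_Components}: after building the SPR-tree $T(G)$ in linear time, the $S$- and $P$-nodes are handled trivially (in total linear time), and it remains to solve \MB---together with the two ``contains / avoids the designated virtual edge'' variants used by the reduction---on each $R$-node skeleton $H_\alpha$. The key observation is that every skeleton is a minor of $G$ and hence itself $(K_5-e)$-minor free; since $R$-node skeletons are $3$-connected, the consequence of \Cref{prop:Wagner_K5-e-free} forces each such $H_\alpha$ to be a wheel $W_m$, the $\prism$, or $K_{3,3}$ (a $K_3$ appears only as an $S$-node).

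The blocks $\prism$ and $K_{3,3}$ have constant size, so \MB on them (and both designated-edge variants) costs $\mathcal{O}(1)$ each. The only block of unbounded size is the wheel, which is the crux. Here I would use \Cref{prop:MaxBond_bounded_treewidth}: a wheel $W_m$ has tree-width $3$, and a nice tree decomposition of width $3$ can be written down explicitly in linear time---for instance using the bags $\{c, v_1, v_i, v_{i+1}\}$, which keep both the center $c$ and a fixed rim node $v_1$ resident while sweeping the remaining rim. Applying \Cref{prop:MaxBond_bounded_treewidth} with $k=3$ then solves \MB in time $2^{\mathcal{O}(3\log 3)}\cdot m = \mathcal{O}(m)$. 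For the two variants I would follow the proof of \Cref{thm:Reduction_3connectivity_Components}: contracting the virtual edge yields $W_{m-1}$ (again a width-$3$ wheel), while forcing it into the bond only reweights $W_m$, so both stay linear. Alternatively one can bypass tree-width and solve \MB on $W_m$ directly from its bond structure---every bond is $\emptyset$, the spoke star $\delta(c)$, or a cut picking exactly two rim edges with the rim part of $S$ a contiguous arc---which reduces to a circular maximum-subarray computation over the arc boundaries and spoke sums in $\mathcal{O}(m)$.

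Putting this together, the per-skeleton running time $p(|E(H_\alpha)|)$ is linear, so the reduction's total cost is $\mathcal{O}\bigl(\sum_\alpha |E(H_\alpha)|\bigr)$. Since the SPR-tree has linear total size, this is $\mathcal{O}(|E|)$, and adding the $\mathcal{O}(|E|)$ for building $T(G)$ preserves the bound $\mathcal{O}(|E|)$. Finally, because $(K_5-e)$-minor freeness implies $K_5$-minor freeness (as $K_5-e$ is a minor of $K_5$), $G$ satisfies $|E|\le 3n-6$, whence $\mathcal{O}(|E|)=\mathcal{O}(n)$, as claimed.

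\textbf{The main obstacle} is the wheel case, the only $3$-connected block of unbounded size: one must certify that \MB on it, including the two virtual-edge variants, runs in time linear in $m$. The cleanest route is the explicit width-$3$ nice tree decomposition feeding \Cref{prop:MaxBond_bounded_treewidth}; the subtlety to verify is that the cyclic rim genuinely requires width $3$---a width-$2$ ``path of triangles'' $\{c,v_i,v_{i+1}\}$ violates the subtree-connectivity condition at the rim edge $v_mv_1$---which the bag design above repairs by pinning $v_1$ in every bag. A secondary point worth checking carefully is that each skeleton is indeed a minor of $G$, so that the Wagner-type classification of \Cref{prop:Wagner_K5-e-free} legitimately applies to it.
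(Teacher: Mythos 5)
Your proposal is correct and follows essentially the same route as the paper: reduce to $3$-connected components via \Cref{thm:Reduction_3connectivity_Components}, classify them using \Cref{prop:Wagner_K5-e-free}, handle the constant-size blocks directly, and solve wheels via a width-$3$ nice tree decomposition (with the center and one fixed rim node pinned in every bag) fed into \Cref{prop:MaxBond_bounded_treewidth}. Your extra checks---that skeletons are minors of $G$, that the two virtual-edge variants stay linear, and the $|E|\le 3n-6$ bound---are sound refinements of the same argument rather than a different approach.
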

\begin{proof}
	By \Cref{prop:Wagner_K5-e-free} and the fact that $\prism$, $K_3$, $K_{3,3}$ are of constant size, and $|E(W_n)| = 2 (|V(W_n)|-1)$, we have $|E(G)| \in \mathcal{O}(n)$.
	Using \Cref{thm:Reduction_3connectivity_Components}, we can restrict ourselves to $3$-connected $(K_5-e)$-minor free graphs by only $\mathcal{O}(n)$ additive effort. 
	By \Cref{prop:Wagner_K5-e-free} these graphs are wheel graphs, $\prism$, $K_3$, and $K_{3,3}$.
	
	Since we can solve $\MB$ in constant time on $\prism$, $K_3$, and $K_{3,3}$, it only remains to prove that $\MB$ can be solved in time $\mathcal O(n)$ on wheel graphs.
	Denoting the center of $W_n$ by $c$ and the rim nodes by $v_1,\dots,v_n$, it is straight forward to verify that a nice tree decomposition of $W_n$ with width $3$ is given by the bags $\{c\}, \{cv_1\}, \{cv_1v_2\}, \{cv_1v_2v_3\}, \{cv_1v_3\}, \{cv_1v_3v_4\}, \{cv_1v_4\}, \{cv_1v_4v_5\},\dots,\{cv_1v_{n-1}\},$\\$\{cv_1v_{n-1}v_n\}$.
	Using this tree decomposition, \Cref{prop:MaxBond_bounded_treewidth} yields the claim.		
\end{proof}

Note that although the above algorithm has asymptotically linear runtime, the runtime is dependent on large constants.
Since the presented tree decomposition for wheel graphs is in fact even a path decomposition and wheel graphs are of special simple structure for this measure, it should certainly be possible to improve on the constant quite a bit.
Although this might yield a more practical algorithm, this would be out of scope for this work.

\section{Conclusion}
We have introduced bond polytopes and investigated the relation of these to cut polytopes. Then, we have studied the effect of graph-operations on facets of bond polytopes. We have presented a reduction of \MB to $3$-connected graphs. Moreover, we have started an investigation of cycle- and edge inequalities for bond polytopes and derived a family of facet-defining inequalities for bond polytopes. Finally, we have presented a linear time algorithm for \MB on $(K_5-e)$-minor free graphs as well as a linear description for all $3$-connected planar such graphs.  

Recall the open problems from \Cref{sec:interleaved_cycles_and_edges}:
	\questone*
	\questtwo*

On the algorithmical side, we have seen the importance of clique sums. 
Considering cut polytopes, for $k \leq 3$ we can derive a linear description of $\CutP(G_1 \oplus_k G_2)$ given linear descriptions of $\CutP(G_1)$ and $\CutP(G_2)$.
While we have seen how to handle $1$- and $2$-sums in algorithms for \MB, we could not mirror this into the world of bond polytopes.
As a result this would for example yield a linear description for arbitrary $(K_5-e)$-minor free graphs.
Therefore, the following question arises:

\begin{question}
	Given a graph $G=G_1 \oplus_k G_2$ and linear descriptions of $\Bond(G_1)$ and $\Bond(G_2)$. Can we derive a linear description of $\Bond(G)$, at least for $k=1,2$?
\end{question}
As a first step, one may investigate how facets of $\Bond(G_1)$ and $\Bond(G_2)$ can be combined to obtain facets of $\Bond(G)$.
\bibliographystyle{alpha}
{\footnotesize \bibliography{bibliography}}

\newcommand{\etalchar}[1]{$^{#1}$}
\begin{thebibliography}{HKM{\etalchar{+}}20}

\bibitem[BG09]{BrunsGubeladze}
W.~Bruns and J.~Gubeladze.
\newblock {\em Polytopes, rings, and {$K$}-theory}.
\newblock Springer Monographs in Mathematics. Springer, Dordrecht, 2009.

\bibitem[BGM85]{bipartitesubgraph}
F.\ Barahona, M.\ Gr\"{o}tschel, and A.~R. Mahjoub.
\newblock Facets of the bipartite subgraph polytope.
\newblock {\em Mathematics of Operations Research}, 10(2):340--358, 1985.

\bibitem[BIR{\etalchar{+}}]{normaliz}
W.~Bruns, B.~Ichim, T.~Römer, R.~Sieg, and C.~Söger.
\newblock Normaliz. algorithms for rational cones and affine monoids.
\newblock \url{https://normaliz.uos.de}.

\bibitem[BM86]{OnTheCutPolytope}
F.~Barahona and A.~R. Mahjoub.
\newblock On the cut polytope.
\newblock {\em Mathematical Programming}, 36:157--173, 1986.

\bibitem[CCG{\etalchar{+}}13]{application_forest}
R.~Carvajal, M.~Constantino, M.~Goycoolea, J.~Vielma, and A.~Weintraub.
\newblock Imposing connectivity constraints in forest planning models.
\newblock {\em Operations Research}, 61:824--836, 2013.

\bibitem[CH17]{spr}
M.~Chimani and P.~Hlin\v{e}n\'{y}.
\newblock A tighter insertion-based approximation of the crossing number.
\newblock {\em Journal of Combinatorial Optimization}, 33:1183--1225, 2017.

\bibitem[Cha17]{bond_series_parallel}
B.~Chaourar.
\newblock A linear time algorithm for a variant of the {MAX} {CUT} problem in
  series parallel graphs.
\newblock {\em Advances in Operations Research}, 2017:1267108:1--1267108:4,
  2017.

\bibitem[Cha20]{bond_K5-e_free}
B.~Chaourar.
\newblock Connected max cut is polynomial for graphs without the excluded minor
  {$K_5\backslash e$}.
\newblock {\em Journal of Combinatorial Optimization}, 40(4):869--875, 2020.

\bibitem[CJNR19]{CutPolytope_K33}
M.~Chimani, M.~{Juhnke-Kubitzke}, A.~Nover, and T.~R{\"o}mer.
\newblock Cut polytopes of minor-free graphs.
\newblock {\em arXiv}, abs/1903.01817, 2019.

\bibitem[dBT96]{dibatt}
G.~di~Battista and R.~Tamassia.
\newblock On-line planarity testing.
\newblock {\em SIAM Journal on Computing}, 25:956--997, 1996.

\bibitem[DDS16]{doi:10.1111/itor.12194}
M.~Deza and M.~Dutour~Sikirić.
\newblock Enumeration of the facets of cut polytopes over some highly symmetric
  graphs.
\newblock {\em International Transactions in Operational Research},
  23(5):853--860, 2016.

\bibitem[DDW16]{Bond_3connected}
G.~Ding, S.~Dziobiak, and H.~Wu.
\newblock Large - or -minors in 3-connected graphs.
\newblock {\em Journal of Graph Theory}, 82(2):207--217, 2016.

\bibitem[DEH{\etalchar{+}}20]{MaxBondMaxConnectedCut}
G.~L. Duarte, H.~Eto, T.~Hanaka, Y.~Kobayashi, Y.~Kobayashi, D.~Lokshtanov,
  L.~L.~C. Pedrosa, R.~C.~S. Schouery, and U.~S. Souza.
\newblock Computing the largest bond and the maximum connected cut of a graph,
  2020.

\bibitem[DF13]{downey2013fundamentals}
R.~G. Downey and M.~R. Fellows.
\newblock {\em Fundamentals of Parameterized Complexity}.
\newblock Texts in Computer Science. Springer London, 2013.

\bibitem[Die18]{diestel}
R.\ Diestel.
\newblock {\em Graph theory}.
\newblock Graduate Texts in Mathematics. Springer, Berlin, fifth edition, 2018.

\bibitem[DL92a]{DL1}
M.~Deza and M.~Laurent.
\newblock Facets for the cut cone {I}.
\newblock {\em Mathematical Programming}, 56:121--160, 1992.

\bibitem[DL92b]{DL2}
M.~Deza and M.~Laurent.
\newblock Facets for the cut cone {II:} clique-web inequalities.
\newblock {\em Mathematical Programming}, 56:161--188, 1992.

\bibitem[DL09]{GeometryOfCutsAndMetrics}
M.~Deza and M.~Laurent.
\newblock {\em Geometry of Cuts and Metrics}.
\newblock Algorithms and Combinatorics, 15. Springer My Copy UK, 2009.

\bibitem[DLP{\etalchar{+}}19]{computing_largest_bond}
G.~L. Duarte, D.~Lokshtanov, L.~L.~C. Pedrosa, R.~C.~S. Schouery, and U.~S.
  Souza.
\newblock Computing the largest bond of a graph.
\newblock In {\em Proc. of Intl Symposium on Parameterized and Exact
  Computation, {IPEC} 2019}, volume 148 of {\em LIPIcs}, pages 12:1--12:15,
  2019.

\bibitem[DS17]{GeneralizedCutAndMetricPolytopes}
M.~Deza and M.~Sikiric.
\newblock Generalized cut and metric polytopes of graphs and simplicial
  complexes.
\newblock {\em Optimization Letters}, 2017.

\bibitem[EHKK19]{parameterized_maximum_cut_connectivity}
H.~Eto, T.~Hanaka, Y.~Kobayashi, and Y.~Kobayashi.
\newblock Parameterized algorithms for maximum cut with connectivity
  constraints.
\newblock In {\em Proc. of Intl Symposium on Parameterized and Exact
  Computation, {IPEC} 2019}, volume 148 of {\em LIPIcs}, pages 13:1--13:15,
  2019.

\bibitem[Fly17]{VerifiedConjecture}
M.~Flynn.
\newblock The largest bond in 3-connected graphs.
\newblock {Honors Theses. 695}, University of Mississippi, 2017.
\newblock \url{{https://egrove.olemiss.edu/hon_thesis/695/}}.

\bibitem[GHK{\etalchar{+}}18]{DBLP:journals/ipl/GandhiHKPS18}
R.~Gandhi, M.~T. Hajiaghayi, G.~Kortsarz, M.~Purohit, and K.~K. Sarpatwar.
\newblock On maximum leaf trees and connections to connected maximum cut
  problems.
\newblock {\em Information Processing Letters}, 129:31--34, 2018.

\bibitem[GKL{\etalchar{+}}19]{application_market}
V.~Grimm, T.~Kleinert, F.~Liers, M.~Schmidt, and G.~Zöttl.
\newblock Optimal price zones of electricity markets: a mixed-integer
  multilevel model and global solution approaches.
\newblock {\em Optimization Methods and Software}, 34(2):406--436, 2019.

\bibitem[HKM{\etalchar{+}}15]{DBLP:conf/esa/HajiaghayiKMPS15}
M.~T. Hajiaghayi, G.~Kortsarz, R.~MacDavid, M.~Purohit, and K.~K. Sarpatwar.
\newblock Approximation algorithms for connected maximum cut and related
  problems.
\newblock In {\em Proc. of European Symposium on Algorithms, ESA 2015}, volume
  9294 of {\em LNCS}, pages 693--704. Springer, 2015.

\bibitem[HKM{\etalchar{+}}20]{DBLP:journals/tcs/HajiaghayiKMPS20}
M.~T. Hajiaghayi, G.~Kortsarz, R.~MacDavid, M.~Purohit, and K.~K. Sarpatwar.
\newblock Approximation algorithms for connected maximum cut and related
  problems.
\newblock {\em Theoretical Computer Science}, 814:74--85, 2020.

\bibitem[HT73]{triconnected}
J.~Hopcroft and R.~Tarjan.
\newblock Dividing a graph into triconnected components.
\newblock {\em SIAM Journal on Computing}, 2(3):135--158, 1973.

\bibitem[HV91]{Approx_Intractability_MaxCut_Vartiants}
D.~J. Haglin and S.~M. Venkatesan.
\newblock Approximation and intractability results for the maximum cut problem
  and its variants.
\newblock {\em IEEE Transactions on Computers}, 40(1):110--113, 1991.

\bibitem[Kar72]{karp}
R.~M. Karp.
\newblock Reducibility among combinatorial problems.
\newblock In {\em Proc. of a symposium on the Complexity of Computer
  Computations, ICCC}, pages 85--103, 1972.

\bibitem[PT92]{POLJAK1992379}
S.~Poljak and D.~Turzík.
\newblock Max-cut in circulant graphs.
\newblock {\em Discrete Mathematics}, 108(1):379--392, 1992.

\bibitem[RS95]{GraphMinors13}
N.~Robertson and P.~D. Seymour.
\newblock Graph minors .{XIII}. the disjoint paths problem.
\newblock {\em Journal of Combinatorial Theory, Series B}, 63(1):65--110, 1995.

\bibitem[Tut66]{tutte}
W.~T. Tutte.
\newblock {\em Connectivity in Graphs}.
\newblock Univ.\ of Toronto Press, 1966.

\bibitem[VKR08]{application_image}
S.~Vicente, V.~Kolmogorov, and C.~Rother.
\newblock Graph cut based image segmentation with connectivity priors.
\newblock In {\em Proc. of 2008 IEEE Conference on Computer Vision and Pattern
  Recognition, CVPR}, pages 1--8, 2008.

\bibitem[Wag60]{Decomposition_K5-e-Free}
K.~Wagner.
\newblock Bemerkungen zu {H}adwigers {V}ermutung.
\newblock {\em Mathematische Annalen}, 141(5):433--451, 1960.

\bibitem[Zie00]{lectures_0-1-polytopes}
G.~M. Ziegler.
\newblock {\em Lectures on 0/1-Polytopes}, pages 1--41.
\newblock Birkh{\"a}user Basel, Basel, 2000.

\bibitem[Zie12]{ziegler}
G.~M. Ziegler.
\newblock {\em Lectures on Polytopes}.
\newblock Graduate Texts in Mathematics. Springer New York, 2012.

\end{thebibliography}

\end{document}